\newcommand{\caseA}{{\rm (A)}}
\newcommand{\caseB}{{\rm (B)}}
\newcommand{\xzero}{x_0}
\newcommand{\justx}{x}
\newcommand{\red}[1]{#1}
\newcommand{\SL}[2]{\mathrm{SL}_{#1}(#2)}
\newcommand{\lat}[1]{#1\Z^{n+1}}
\let\OLDthebibliography\thebibliography
\renewcommand\thebibliography[1]{
  \OLDthebibliography{#1}
  \setlength{\parskip}{0pt}
  \setlength{\itemsep}{0pt plus 0.3ex}
}
\newtheorem{thm}{Theorem}[section]
\newtheorem{lemma}[thm]{Lemma}
\newtheorem{prop}[thm]{Proposition}
\newtheorem{cor}[thm]{Corollary}
\theoremstyle{plain}
\newtheorem{prob}{Problem}
\theoremstyle{definition}
\newtheorem{defn}[thm]{Definition}
\theoremstyle{remark}
\newtheorem{remark}[thm]{Remark}
\numberwithin{equation}{section}
\newcommand{\h}{h}
\newcommand{\HH}{h}
\newcommand{\we}{\wedge}
\newcommand{\vv}[1]{\mathbf{#1}}
\newcommand{\rr}{\mathbf{r}}
\newcommand*\wrapletters[1]{\wr@pletters#1\@nil}
\def\wr@pletters#1#2\@nil{#1\allowbreak\if&#2&\else\wr@pletters#2\@nil\fi}
\def \sharp {\#} %vb
\def \Span{\operatorname{Span}} %vb
\def\eps{\varepsilon} \def \epsilon {{\varepsilon}}
\newcommand{\C}{\mathbb{C}}         % complex numbers
\newcommand{\N}{\mathbb{N}}         % natural numbers
\newcommand{\I}{\mathbb{I}}
\newcommand{\R}{\mathbb{R}}        % real numbers
\newcommand{\Z}{\mathbb{Z}}         % integer numbers
\def \bh {\mathbf h}
\def \cB {\mathcal B}
\def \cH {\mathcal H}
\def \cI {\mathcal I}
\def \cJ {\mathcal J}
\def \cK {\mathcal K}
\def \cS {\mathcal S}
\def \cW {\mathcal W}
\def \dim {\mathrm{dim}}
\def \det {\mathrm{det}}
\def \deg {\mathrm{deg}}
\def \diag {\mathrm{diag}}
\def \supp {\operatorname{supp}}
\def \Bad {{\mathbf{Bad}}}
\def \R {{\mathbb{R}}}
\def \Z {{\mathbb{Z}}}
\def \I {{\mathrm{I}}}
\newcommand{\Par}{\mathrm{\mathbf{Par}}}
\newcommand{\Iq}{I}%replaced I_{q+1}
\newcommand{\Ip}{I'}%replaced I_{p}
\newcommand{\Jp}{J}%replaced J_{p}
\newcommand{\Jq}{J}%replaced J_{q}
\newcommand{\Jqo}{J}%replaced J_{q+1}
\renewcommand{\ell}{l}
\newcommand{\m}{1}
\newcommand{\mo}{}
\newcommand{\etaq}{q/4}
\newcommand{\etaqt}{q/8}
\newcommand{\etaqf}{q/16}
\newcommand{\etainv}{4}
\newcommand{\etainvt}{8}
\begin{document}

\title{Winning property of badly approximable points on curves}

\author[Victor Beresnevich]{Victor Beresnevich}
\address{Victor Beresnevich, Department of Mathematics, University of York, Heslington, York, YO10 5DD, United Kingdom}
\email{victor.beresnevich@york.ac.uk}

\author[Erez Nesharim]{Erez Nesharim}
\address{Erez Nesharim, Einstein Institute of Mathematics, The Hebrew University of Jerusalem, Jerusalem, 9190401, Israel}
\email{ereznesh@gmail.com}

\author[Lei Yang]{Lei Yang}
\address{Lei Yang, College of Mathematics, Sichuan University, Chengdu, Sichuan, 610000, China}
\email{lyang861028@gmail.com}

\begin{abstract}
In this paper we prove that badly approximable points on any analytic non-degenerate curve in $\R^n$ is an absolute winning set.
This confirms a key conjecture in the area stated by Badziahin and Velani (2014) which represents a far-reaching generalisation
of Davenport's problem from the 1960s. Amongst various consequences of our main result is a solution to Bugeaud's problem on real
numbers badly approximable by algebraic numbers of arbitrary degree.
The proof relies on new ideas from fractal geometry and homogeneous dynamics.
\end{abstract}

\maketitle
%%%%%%%%%%%%%%%%%%%%%%%%%%%%%%%%%%%%%%%%%%%%%%%%%%%%%%%%%%%%%%%%%%%%%%%%%%%%%%%%%%%%%%%%%%%%%%%%
%%%%%%%%%%%%%%%%%%%%%%%%%%%%%%%%%%%%%%%%%%%%%%%%%%%%%%%%%%%%%%%%%%%%%%%%%%%%%%%%%%%%%%%%%%%%%%%%
%%%%%%%%%%%%%%%%%%%%%%%%%%%%%%%%%%%%%%%%%%% Introduction  %%%%%%%%%%%%%%%%%%%%%%%%%%%%%%%%%%%%%%
%%%%%%%%%%%%%%%%%%%%%%%%%%%%%%%%%%%%%%%%%%%%%%%%%%%%%%%%%%%%%%%%%%%%%%%%%%%%%%%%%%%%%%%%%%%%%%%%
%%%%%%%%%%%%%%%%%%%%%%%%%%%%%%%%%%%%%%%%%%%%%%%%%%%%%%%%%%%%%%%%%%%%%%%%%%%%%%%%%%%%%%%%%%%%%%%%

\section{Introduction}\label{sec-introduction}

The primary object of study in this paper will be the sets $\Bad(\rr)$ of {\em $\rr$-badly approximable points} in $\R^n$. Here and elsewhere $\rr=(r_1,\dots,r_n)\in\R^n$ is an $n$-tuple of {\em weights of approximation}, which, by definition, satisfy the following conditions:
\begin{equation}\label{weights}
r_1+\dots+r_n=1\qquad\text{and}\qquad r_i\ge0\quad\forall ~i\,.
\end{equation}
Recall that $\vv x=(x_1,\dots,x_n)\in\Bad(\rr)$ if and only if there is exists $c=c(\vv x)>0$
such that
\begin{equation}\label{eq702}
\max_{1\le i\le n}q^{r_i}|q x_i-p_i|\ge c
\end{equation}
for all $q\in\N$ and $\vv p=(p_1,\dots,p_n)\in\Z^n$. It is well known that when $c=1$ then for all $\vv x\in\R^n$ the opposite to \eqref{eq702} holds for infinitely many $(\vv p,q)\in\Z^n\times\N$; see, for instance, \cite[\S1.4.2]{MR3618787}. Thus, justifying their name, badly approximable points exhibit the worst proximity by rational points $\vv p/q$ up to the constant factor $c>0$. When $\rr=\left(\frac1n,\dots,\frac1n\right)$, we will write $\Bad(n)$ for $\Bad\left(\frac1n,\dots,\frac1n\right)$.

Badly approximable points represent one of the central concepts in Diophantine approximation that has been studied for well over a century. Notably, in the 1980s Dani \cite{MR794799} discovered a neat correspondence between points in $\Bad(n)$ and bounded orbits of diagonal flows on the space of lattices in $\R^{n+1}$. Ever since badly approximable points have also been in the spotlight within homogeneous dynamics.

The holy grail of the theory of badly approximable points is {\em the winning property} of the sets $\Bad(\rr)$ and their restrictions to natural geometric objects such as curves, surfaces and fractals in $\R^n$. The winning property is understood in the sense of Schmidt's game \cite{Schmidt-Game} or its generalisations, which detailed account can be found in \cite{BHNS2018}. Indeed, Schmidt's games have long been the main vehicle for many advances in both Diophantine approximation and homogeneous dynamics including \cite{MR248090, MR980795, MR2191212, MR2520102, MR2480100, Klein-Weiss-Modified-Schmidt, MR2720230, MR2808214, MR2855820, MR2818688, MR2981929, MR3296561, MR3436158, An, MR3474376, MR3733884, MR3826896, MR3910474, MR3987817} amongst many others. More recently, stemming from the breakthrough proof of Schmidt's conjecture \cite{BPV}, new approaches evolved, enabling several major developments. These include the proof of Schmidt's conjecture in higher dimensions \cite{Beresnevich, Yang2019}, resolution to Davenport's problem \cite{MR3231023, Beresnevich, Yang2019} and discoveries on multiplicative approximations \cite{Badziahin-Velani2011, MR3028170}. However, the property of being winning, which is the strongest and thus most desirable, was largely non-attainable by these developments, except in dimension $n=2$ \cite{MR3081541, MR3224831, An, MR3733884}. Indeed, for $n>2$, the most general result, which encompasses previous milestones \cite{Schmidt-Game}, \cite{MR2520102} and \cite{MR2981929}, is the Guan-Yu theorem \cite{MR3910474} that $\Bad(\rr)$ with $r_1=\dots=r_{n-1}\ge r_n$ is hyperplane absolute winning. It is also worth mentioning that Kleinbock and Weiss \cite{Klein-Weiss-Modified-Schmidt} proved that $\Bad(\rr)$ is modified winning, where the notion of winning depends on $\rr$. The goal of this paper is to address the following key challenge.

\begin{prob}[{\cite[Conjecture~B]{MR3231023}}]\label{p1}
Prove that the set of $\rr$-badly approximable points lying on a nondegenerate curve in $\R^n$ is winning.
\end{prob}

%\medskip

%\noindent\textbf{Problem 1} (\cite[Conjecture~B]{MR3231023})\textbf{.} {\em Prove that the set of\/ $\rr$-badly approximable %points lying on a nondegenerate curve in\/ $\R^n$ is winning.}

%\medskip

Our main result (Theorem~\ref{thm:main-theorem} below) settles this problem in full for analytic nondegenerate curves in arbitrary dimensions. This represents the first result of its kind beyond planar curves. In fact, even for $n=2$ our result is new as we establish the {\em absolute winning} property, which is stronger than Schmidt's winning property that was established in \cite{MR3733884} for all non-degenerate planar curves. As a consequence of our main result, we are able to resolve Bugeaud's problem \cite[Problem~2.9.11]{MR3618788}\footnote{Problem~9.9 in the preprint version \url{arXiv:1502.03052}} %\href{https://arxiv.org/abs/1502.03052}{arXiv:1502.03052}}
on real numbers badly approximable by algebraic numbers of arbitrary degree.

Prior to stating our main result, recall that an analytic map $\varphi=(\varphi_1,\dots,\varphi_n):U\to\R^n$, defined on an open interval $U\subset\R$, is {\em nondegenerate} if $\varphi_1,\dots,\varphi_n$ together with the constant function $1$ are linearly independent over $\R$.
Also, recall that $S\subset\R$ is absolute winning on $U$ if and only if $S\cup(\R\setminus U)$ is absolute winning.

\begin{thm}\label{thm:main-theorem}
Let $\rr$ be an $n$-tuple of weights of approximation, $U\subset\R$ be a non-empty open interval and $\varphi:U\to\R^n$ be a nondegenerate analytic map. Then the set $\varphi^{-1}\left(\Bad(\rr)\right)$ is absolute winning on $U$.
\end{thm}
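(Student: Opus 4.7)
The plan is to reformulate the problem via Dani's correspondence and then construct an explicit strategy for Alice in the absolute winning game. Letting $g_t = \diag(e^t, e^{-r_1 t}, \dots, e^{-r_n t})$ act on the space of unimodular lattices $\SL{n+1}{\R}/\SL{n+1}{\Z}$, and letting $u(\by)$ denote the standard unipotent attached to $\by \in \R^n$, one has $\by \in \Bad(\rr)$ if and only if the orbit $\{g_t u(\by)\Z^{n+1} : t \ge 0\}$ stays in a fixed compact set; equivalently, there exists $c > 0$ such that for every nonzero $(q, \bp) \in \Z \times \Z^n$ and every $t \ge 0$ the quantity $\max(e^t|q|, e^{-r_1 t}|p_1 - q y_1|, \dots, e^{-r_n t}|p_n - q y_n|)$ is $\ge c$. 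The goal in the absolute game on $U$ is to force a limit point $x_\infty$ and a uniform $c > 0$ realising this condition with $\by = \varphi(x_\infty)$.

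The core strategy is a multi-scale Cantor-type construction matched to the absolute game. Fix $\beta \in (0, 1/2)$. At stage $k$ Bob holds a closed ball $B_k \subset U$ of radius $\rho_k \asymp \beta^k$, and Alice removes a ball of radius $\le \beta \rho_k$. One calibrates a height $H_k$ and a time $t_k$ so that at scale $\rho_k$ the ``active'' threats to bad approximability come from integer vectors $(q, \bp)$ with $|q|$ in a dyadic window around $H_k$. For each such vector, the set of parameters $x \in U$ at which $g_{t_k} u(\varphi(x))(q, \bp - q\varphi(x))$ enters the cusp is, by the analyticity of $\varphi$ and a Remez-type estimate, a union of a bounded number of intervals of length $\lesssim \rho_k$. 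Alice's move at stage $k$ removes a ball expelling the most dangerous such vector from $B_{k+1}$.

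The central combinatorial input is a \emph{sparsity lemma}: in any interval of radius $\rho_k$ there are at most $O(1)$ integer vectors in the window $[H_k, H_{k+1})$ whose dangerous region meets it substantially. This continues and substantially strengthens the philosophy of \cite{BPV, MR3231023, Beresnevich, Yang2019}: two distinct near-miss vectors of comparable height on a nondegenerate analytic arc force an algebraic coincidence, and the analyticity of $\varphi$ prevents such coincidences from clustering. The proof of this estimate should combine a Kleinbock--Margulis type quantitative non-divergence along the curve with a pigeonhole argument exploiting the combinatorial rigidity of first-return patterns to the cusp for an analytic orbit.

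The main obstacle is converting the Cantor-type sparsity estimate into a genuine strategy for the absolute game, in which Alice removes only a single ball per round rather than a whole layer of a Cantor set. The remedy is a delicate staggered bookkeeping: Alice processes different height windows at matched stages, ``debiting'' dangerous vectors of height $H_j$ with $j > k$ to be handled at a future stage $k' > k$ with $\rho_{k'} \asymp H_j^{-1}$. For the scheme to succeed the aggregate debt at every stage must stay boundedly small; this follows from the sparsity lemma and the geometric decay of $\rho_k$. Verifying this uniformly over all analytic nondegenerate $\varphi$ and all weights $\rr$ satisfying \eqref{weights} is the technical heart of the argument, and is precisely where analyticity plays its decisive role, since the parallel statement for merely $C^k$ nondegenerate curves in dimension $n > 2$ with general weights appears to require genuinely new ideas.
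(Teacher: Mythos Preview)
Your proposal is a plausible-sounding sketch, but it is not a proof, and the route you outline diverges from the paper's in a way that hides precisely the difficulty the paper has to overcome.

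First, the paper does \emph{not} play the absolute game directly. Instead it invokes the characterisation from \cite{BHNS2018} (Lemmas~\ref{thm:intersection-fractals-winning} and \ref{diffuse}): a Borel set is absolute winning on $U$ if and only if it meets the support of every Ahlfors regular measure $\mu$ with $\supp\mu\cap U\neq\emptyset$. This reduction is the first essential step and it is absent from your plan. It buys the freedom to run a Badziahin--Velani generalised Cantor construction (\S\ref{sec-cantor-like-construction}) in which \emph{many} subintervals may be removed at each stage, with removal rates $h_{p,q}$ allowed to grow polynomially in $R^{q-p}$. Your ``sparsity lemma'' asserting $O(1)$ dangerous vectors per window is exactly what is not known (and not needed) here; the paper instead proves the quantitative bounds of Propositions~\ref{prop:estimate-extremely-dangerous} and \ref{prop:estimate-dangerous}, which are far weaker than $O(1)$ but suffice for Theorem~\ref{thm:generalized-cantor-set-estimate}. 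Your ``staggered bookkeeping'' to compress a Cantor construction into one-ball-per-round moves is left entirely unspecified, and no such compression is known for weighted $\Bad(\rr)$ in dimension $n>2$.

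Second, and more seriously, you do not address the algebraic obstructions in the quantitative non-divergence estimate. Applying QND along the curve (Theorem~\ref{thm:QnD-fractal-local}) produces either the desired measure bound or a small multivector $\vv v_1\wedge\cdots\wedge\vv v_i$. Ruling out the latter is the heart of the argument, and the paper's genuinely new idea is to introduce a \emph{second} diagonal action $b(t)=\diag(e^{-t/n},e^t,e^{-t/n},\dots,e^{-t/n})$ commuting with $a(t)$, and to define the removed intervals via \eqref{pr2} using both flows simultaneously. The crucial Lemma~\ref{lm:yang-lm5.8} then shows that if the obstruction persists along $J$, it would already have been removed at an earlier stage of the Cantor construction (at a different value of the $b$-parameter $l$). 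Nothing in your outline corresponds to this mechanism; the invocation of ``analyticity prevents coincidences from clustering'' does not substitute for it.
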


\subsection{Properties of absolute winning sets}

Absolute winning sets were introduced by McMullen in \cite{MR2720230} in order to strengthen the notion of Schmidt's winning sets \cite{Schmidt-Game}. Precise definitions and various properties of (absolute) winning sets can be found in \cite{Schmidt-Game,MR2720230,MR2981929,BHNS2018}. In particular, we have the following lemma that is assembled from known results that can be found in \cite{MR2720230, MR2981929, BHNS2018}. In what follows, $\dim$ stands for Hausdorff dimension.

\begin{lemma}\label{abswinprop} The following properties hold for subsets of $\R$\/{\rm:}\\[-3.5ex]
\begin{enumerate}
  \item[{\rm(i)}] absolute winning sets are preserved by quasi-symmetric homeomorphisms, including by-Lipschitz maps, and $C^1$ diffeomorphisms;
  \item[{\rm(ii)}] any countable intersection of absolute winning sets is absolute winning;
  \item[{\rm(iii)}] any absolute winning set has full Hausdorff dimension; and furthermore
  \item[{\rm(iv)}] for any absolute winning set $S\subset\R$, any open interval $U$ and any Ahlfors regular measure $\mu$ on $\R$ such that $U\cap \supp\mu\neq\emptyset$ we have that
\begin{equation}\label{dim1}
      \dim(S\cap U\cap\supp\mu)=\dim(\supp\mu)\,.
\end{equation}
\end{enumerate}
\end{lemma}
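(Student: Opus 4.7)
I would address the four items separately using the standard formalism of Schmidt's $\beta$-absolute game on $\R$: for $\beta\in(0,1/3)$, Bob plays nested closed balls $B_k$ of radius $\rho_k$, Alice removes a set $A_k$ of diameter $\le\beta\rho_k$, Bob then selects $B_{k+1}\subset B_k\setminus A_k$ with $\rho_{k+1}\ge\beta\rho_k$, and $S$ is absolute winning if for every such $\beta$ Alice has a strategy forcing $\bigcap_k B_k$ to meet $S$. Crucially, absolute winning is a property \emph{uniform} in $\beta$, and each assertion exploits this uniformity in a different way.

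For (i), a quasi-symmetric homeomorphism $f$ maps balls to sets comparable to balls with distortion controlled by the quasi-symmetry control function $\eta$ (with $\eta(0^+)=0$). Consequently Alice's $\beta'$-winning strategy for $S$ pulls back to a $\beta$-winning strategy for $f(S)$ with $\beta'=\beta'(\beta,\eta)\to 0$ as $\beta\to 0$; uniformity in $\beta$ preserves absolute winning. Bi-Lipschitz maps and $C^1$-diffeomorphisms are quasi-symmetric. For (ii), given countable $\{S_i\}$ and $\beta\in(0,1/3)$, I would use a diagonal interleaving: fix a bijection $\tau:\N\to\N\times\N$ hitting each first coordinate infinitely often, and at step $k$ with $\tau(k)=(i,j)$ have Alice execute the $j$-th move of a $\beta_i$-winning strategy for $S_i$ regarded as a subgame, where $\beta_i$ is chosen sufficiently small that the radius ratios of the subsequence of moves indexed by $\tau^{-1}(\{i\}\times\N)$ form a legal $\beta_i$-absolute game. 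Universality in $\beta$ supplies the required $\sigma_i$ for each $i$, and the unique limit point of $\{B_k\}$ lies in every $S_i$.

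For (iii) and (iv), set $F=\supp\mu$ and $s=\dim F$, and build a Cantor subset of $S\cap U\cap F$. Ahlfors $s$-regularity implies that every ball of radius $\rho$ centred on $F$ contains, after removal of any set of diameter $\le\beta\rho$, at least $N_\beta\gtrsim\beta^{-s}$ disjoint sub-balls of radius $\gamma\rho$ centred on $F$, for a fixed $\gamma=\gamma(\beta,s)$. Taking Bob's initial ball inside $U\cap F$ and having him play such a branching tree against Alice's $\beta$-winning strategy for $S$, every infinite descent lands in $S\cap F$. The resulting Cantor limit set $K\subset S\cap U\cap F$ carries the natural uniform mass distribution, and the mass-distribution principle then gives $\dim K\ge\log N_\beta/\log(1/\gamma)\to s$ as $\beta\to 0$, establishing \eqref{dim1}. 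Part (iii) is the special case $F=\R$ with Lebesgue measure, where $N_\beta\gtrsim\beta^{-1}$ suffices for full dimension.

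The principal technical obstacle is (iv): one must verify that Ahlfors regularity (not merely the weaker doubling property) supplies the quantitative branching bound $N_\beta\gtrsim\beta^{-s}$ with $\gamma$ comparable across scales, and that this bound is strong enough for the limit Cantor set to achieve Hausdorff dimension arbitrarily close to $s=\dim F$ as $\beta\to 0$. This is the substantive content of the arguments in \cite{MR2981929, BHNS2018} that the proof would cite.
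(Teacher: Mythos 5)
The paper does not actually prove Lemma \ref{abswinprop}: it states explicitly that the lemma ``is assembled from known results that can be found in \cite{MR2720230, MR2981929, BHNS2018}'' and offers no argument. So there is no ``paper's own proof'' to compare against; what you wrote is a standalone sketch of how the cited results are established, and in that role it is substantively on the right track and consistent with the literature the paper appeals to.

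A few remarks on the details of your sketch. In (i) the direction of the transfer should be stated carefully: if $S$ is absolute winning and $f$ is quasi-symmetric, then to show $f(S)$ is absolute winning Alice plays the $\beta$-absolute game on the image side, conjugates Bob's moves by $f^{-1}$, consults a winning strategy for $S$ in a $\beta'$-absolute game on the source side, and pushes her move forward by $f$; the control $\beta' = \beta'(\beta,\eta)$ with $\beta'\to 0$ as $\beta\to 0$ is what you need because absolute winning is quantified over \emph{all} sufficiently small $\beta$, and the distortion of radii under $f$ must be absorbed into this quantifier. In (iv), your bound ``$N_\beta\gtrsim \beta^{-s}$'' conflates the depth ratio $\gamma$ with the game parameter $\beta$: what Ahlfors $s$-regularity gives is that a ball of radius $\rho$ centred on $\supp\mu$ contains $\gtrsim\gamma^{-s}$ disjoint sub-balls of radius $\gamma\rho$ centred on $\supp\mu$, and Alice's removal of a set of diameter $\le\beta\rho$ can destroy only boundedly many of them; choosing $\gamma$ freely subject to $\gamma\ge\beta$, the surviving branching gives a Cantor set of dimension $\ge s - O(1/\log(1/\gamma))$, which tends to $s$ as $\gamma\to 0$, and it is this that feeds into the mass distribution principle. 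With that adjustment, (iii) is indeed the Lebesgue special case, and (ii) via interleaved strategies is the standard argument. None of this constitutes a gap that would affect the paper, since the paper is simply citing these facts rather than proving them.
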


A Borel measure $\mu$ on $\R^d$ will be called {\em $(C,\alpha)$-Ahlfors regular}, where $C >1$ and $\alpha >0$ are constants, if there exists $\rho_0>0$ such that for any ball $B(x,\rho)\subset\R$ centred at $x\in\supp \mu$ of radius $\rho>0$ we have that
\begin{equation}\label{Ahlfors}
C^{-1} \rho^{\alpha} \le \mu(B(x,\rho)) \le C \rho^{\alpha}\qquad\text{if }\rho\le \rho_0\,.
\end{equation}
We say that $\mu$ is {\em $\alpha$-Ahlfors regular}\/ if there exists $C >1$ such that $\mu$ is $(C,\alpha)$-Ahlfors regular; and we say that $\mu$ is {\em Ahlfors regular}\/ if it is $\alpha$-Ahlfors regular for some $\alpha>0$. Recall that $\dim(\supp\mu)=\alpha$ for any $\alpha$-Ahlfors regular measure $\mu$. In particular, $\supp\mu$ has the cardinality of continuum.

Property (iv) of Lemma~\ref{abswinprop} merits a special comment as understanding the intersection of $S=\varphi^{-1}(\Bad(\rr))$ with the supports of Ahlfors regular measures will underpin our approach to establishing Theorem~\ref{thm:main-theorem}. First of all, note that the simplest and indeed weakest meaningful consequence of \eqref{dim1} is that
\begin{equation}\label{dim2}
 S\cap U\cap\supp\mu\neq\emptyset\,.
\end{equation}
However, as it happens, the validity of \eqref{dim2} for every Ahlfors regular measure $\mu$ such that $\supp\mu\cap U\neq\emptyset$ is enough for ensuring the much stronger full-dimension property \eqref{dim1}, and furthermore the absolute winning property of $S$ on $U$. This is a consequence of the following two facts recently established in \cite{BHNS2018}.

\begin{lemma}[Corollary 4.2 in \cite{BHNS2018}]\label{thm:intersection-fractals-winning}
Let $U\subset\R$ be a non-empty open interval and $S \subset U$ be a Borel subset. Suppose that for every diffuse subset $K\subset U$ we have that $S\cap K\neq \emptyset$. Then $S$ is absolute winning on $U$.
\end{lemma}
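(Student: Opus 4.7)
The plan is to argue by the contrapositive, combining Martin's Borel determinacy theorem with an explicit construction of a diffuse set from Bob's winning strategy. Suppose $S$ is not absolute winning on $U$; by definition $S\cup(\R\setminus U)$ is not absolute winning, so there exists $\beta\in(0,1)$ for which Alice has no winning strategy in the $\beta$-absolute game with target $S\cup(\R\setminus U)$. Since $S$ is Borel, so is $S\cup(\R\setminus U)$, and Borel determinacy yields a winning strategy $\sigma$ for Bob. Bob's initial ball must meet $U$ (otherwise Alice wins immediately), and by localising we may assume every ball Bob ever plays under $\sigma$ is contained in $U$.

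Next I would let $K\subset U$ be the set of all outcomes (intersection points of nested ball sequences) arising from plays in which Bob uses $\sigma$ and Alice plays arbitrarily. This set is non-empty and closed, and because $\sigma$ is winning for Bob every outcome lies outside $S\cup(\R\setminus U)$. In particular $K\subset U$ and $K\cap S=\emptyset$. The whole task then reduces to showing that $K$ is diffuse, which will contradict the standing hypothesis.

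The core step is verifying diffuseness. Fix $x\in K$ and let $(B_i,A_i)_{i\ge 0}$ be the unique play producing $x$ when Bob follows $\sigma$, with $B_i$ of radius $\rho_i\to 0$. Given $r>0$ small, pick the stage $n$ with $\rho_n\le r/2<\rho_{n-1}$, so $B_n\subset B(x,r)$ since $x\in B_n$. Now I perturb the play at stage $n$: let Alice play instead the ball $A_n^{*}:=B(x,\beta\rho_n)$, a legal move because it is centred at a point of $B_n$ and its radius satisfies Alice's size constraint. Bob's response $B_{n+1}^{*}:=\sigma(B_0,A_0,\dots,B_n,A_n^{*})$ is a legal ball, of radius at least $\beta\rho_n$, contained in $B_n\setminus A_n^{*}$. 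Following $\sigma$ thereafter against any Alice play produces an outcome $y\in K$ with
$$
y\ \in\ B_{n+1}^{*}\ \subset\ B_n\setminus A_n^{*}\ \subset\ B(x,r)\setminus B(x,\beta\rho_n),
$$
so $|x-y|\ge\beta\rho_n\gtrsim\beta^{2}r$. This shows $K$ is $\beta'$-diffuse for a constant $\beta'$ depending only on $\beta$, completing the contradiction.

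The principal obstacle is the bookkeeping in the perturbation argument: one must verify that $A_n^{*}$ is within Alice's budget, that Bob's strategic response is itself a legal ball of comparable size (so that $K$ inherits a uniform lower bound on separation at every scale), and that continuing the modified play really yields a point of $K$. A secondary issue is to reconcile the two parameters: the quantitative $\beta'$ produced by the construction must match the notion of diffuse used in \cite{BHNS2018}, which depends on the precise radius-ratio conventions in the absolute game; getting this correspondence clean is what makes the statement a \emph{corollary} of the main technical result in \cite{BHNS2018} rather than a free-standing one-page lemma.
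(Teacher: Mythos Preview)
The paper does not supply its own proof of this lemma; it is simply quoted as Corollary~4.2 of \cite{BHNS2018} and used as a black box in the reduction of Theorem~\ref{thm:main-theorem} to Theorem~\ref{thm:main-theorem2}. There is therefore no in-paper argument to compare your proposal against.

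For what it is worth, your contrapositive plan --- invoke Borel determinacy to obtain a winning strategy $\sigma$ for Bob, take $K$ to be the set of all outcomes when Bob follows $\sigma$, and verify that $K\subset U\setminus S$ is diffuse --- is the standard route and is essentially the mechanism behind the cited corollary in \cite{BHNS2018}. Two small points in your sketch would need attention in a full write-up: the play producing a given $x\in K$ need not be unique, so ``the unique play'' should read ``some play''; and nothing in the rules of the absolute game forces $\rho_i\to 0$ along every branch, so the stage $n$ with $\rho_n\le r/2<\rho_{n-1}$ may fail to exist --- but in that case $\bigcap_i B_i$ is already a nondegenerate interval contained in $K$, and diffuseness near $x$ is immediate. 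Your closing paragraph correctly identifies the remaining bookkeeping (legality of $A_n^{*}$, matching the diffuseness parameter to the conventions of \cite{BHNS2018}) as the substance that makes this a corollary of the main technical result there rather than a self-contained lemma.
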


\begin{lemma}[Proposition 2.18 in \cite{BHNS2018}]\label{diffuse}
Let $U\subset\R$ be a non-empty open interval and $K \subset U$ be a diffuse set. Then $K$ contains a closed subset that supports an Ahlfors regular measure.
\end{lemma}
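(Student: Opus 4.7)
The plan is to realise an Ahlfors regular measure on a Cantor subset of $K$ constructed as the limit of a binary tree of nested balls, where the selection at each level is driven by the diffuseness of $K$. Recall that $K$ being diffuse means there exist constants $\beta\in(0,1)$ and $\rho_0>0$ such that for every $x\in K$, every $\rho\in(0,\rho_0]$ with $B(x,\rho)\subset U$, and every $y\in\R$, we have $K\cap B(x,\rho)\setminus B(y,\beta\rho)\neq\emptyset$. Replacing $K$ by its closure in $U$ (which preserves diffuseness), I may assume $K$ is closed.

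For the Cantor construction, fix any $x_\emptyset\in K$ and $R_0\in(0,\rho_0]$ with $B(x_\emptyset,R_0)\subset U$, and set $R_{n+1}=(\beta/8)R_n$. I inductively produce, for every binary word $\sigma\in\{0,1\}^n$, a point $x_\sigma\in K$ such that the balls $B_\sigma:=B(x_\sigma,R_n)$ satisfy $B_{\sigma 0}\cup B_{\sigma 1}\subset B_\sigma$ and $B_{\sigma 0}\cap B_{\sigma 1}=\emptyset$. Given $x_\sigma$, I apply diffuseness at $x_\sigma$ with radius $R_n/2$, first removing $B(x_\sigma,\beta R_n/2)$ to obtain $x_{\sigma 0}\in K\cap B(x_\sigma,R_n/2)$ with $|x_{\sigma 0}-x_\sigma|\geq \beta R_n/2$, then removing $B(x_{\sigma 0},\beta R_n/2)$ to obtain $x_{\sigma 1}\in K\cap B(x_\sigma,R_n/2)$ with $|x_{\sigma 1}-x_{\sigma 0}|\geq \beta R_n/2$. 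The elementary inequalities $R_n/2+R_{n+1}<R_n$ and $\beta R_n/2>2R_{n+1}$ deliver the required containment and disjointness.

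Let $K_\infty:=\bigcap_{n\geq 0}\bigcup_{|\sigma|=n}\overline{B_\sigma}$, a nonempty compact subset of $K$ (using closedness of $K$ and $x_\sigma\in K$ for every $\sigma$). Define a Borel probability measure $\mu$ on $K_\infty$ by $\mu(\overline{B_\sigma}\cap K_\infty)=2^{-n}$ for $|\sigma|=n$, which extends uniquely to a measure with $\supp\mu=K_\infty$. I claim $\mu$ is $(C,\alpha)$-Ahlfors regular with $\alpha:=\log 2/\log(8/\beta)$ and a suitable $C$. Given $x\in K_\infty$ and $0<\rho\leq R_0$, choose $n$ so that $2R_{n+1}\leq\rho<2R_n$. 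A tree induction shows that any two distinct level-$n$ balls are at boundary-to-boundary distance $\geq 2R_n$, so $B(x,\rho)\cap K_\infty$ is contained in the unique level-$n$ ball through $x$, giving $\mu(B(x,\rho))\leq 2^{-n}$. Conversely, the level-$(n+1)$ ball through $x$ is contained in $B(x,2R_{n+1})\subset B(x,\rho)$, so $\mu(B(x,\rho))\geq 2^{-(n+1)}$. Since $2^{-n}=(R_n/R_0)^\alpha\asymp\rho^\alpha$, the Ahlfors regular bounds follow.

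The main technical delicacy is the calibration of the contraction ratio: a single application of diffuseness only yields one well-separated point, so two chained applications—the second explicitly excluding the first point—are required to manufacture two children with guaranteed mutual separation at scale $\beta R_n$. The choice of ratio $8/\beta$ is then the natural one making each level-$(n+1)$ ball nest inside its parent while remaining disjoint from its sibling; crucially, the same choice propagates the sibling separation uniformly at each scale $R_n$, which is precisely what powers the upper bound in the Ahlfors estimate.
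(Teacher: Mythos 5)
The paper does not prove Lemma~2.3 itself; it cites it as Proposition~2.18 of \cite{BHNS2018}, so there is no in-text argument to compare against. Your blind proof supplies a complete, self-contained argument, and the approach --- a binary Cantor tree whose branching at each scale is manufactured from the diffuseness condition applied twice (once to peel away from the parent center, once to peel away from the first child), followed by the uniform $2^{-n}$ mass distribution and a scale-comparison $2R_{n+1}\le\rho<2R_n$ --- is the standard and correct one; I verified the arithmetic ($R_{n+1}<R_n/2$ for nesting, $\beta R_n/2>2R_{n+1}$ for sibling disjointness, and $\alpha=\log2/\log(8/\beta)$ with $2^{-n}=(R_n/R_0)^\alpha$) and the propagation of the $\ge 2R_n$ gap between distinct level-$n$ cells to all deeper levels, and it all checks out.

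One small point deserves more care: you write that you may replace $K$ by its closure in $U$ ``which preserves diffuseness.'' Two remarks. First, diffuseness of $\overline{K}$ is not literally immediate from diffuseness of $K$ at a point $x\in\overline{K}\setminus K$ without a short approximation argument (pick $x'\in K$ near $x$ and adjust constants slightly). Second, and more substantively, if $K$ is genuinely not closed, then a closed subset of $\overline{K}$ need not be a subset of $K$, so the reduction as stated does not obviously deliver what the lemma asks for. In practice this is moot because in the source the term \emph{diffuse} is defined for closed (indeed compact, in the relevant generality) sets, and your construction already uses only points $x_\sigma\in K$; under that convention $K_\infty\subset K$ automatically and the closure step is a no-op that could simply be dropped. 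If you keep the closure step, you should either appeal to closedness being part of the definition of diffuse, or avoid passing to the closure altogether and note directly that $K_\infty$ is the set of limits of the centers $x_\sigma\in K$, hence lies in $K$ when $K$ is closed.
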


%\noindent
Thus, in view of Lemmas~\ref{abswinprop}(iv), \ref{thm:intersection-fractals-winning} and \ref{diffuse}, we have that Theorem~\ref{thm:main-theorem} is equivalent to the following statement.

\begin{thm}\label{thm:main-theorem2}
Let $\rr$ be an $n$-tuple of weights of approximation, $U\subset\R$ be an open interval, $\mu$ be an Ahlfors regular measure such that $U\cap\supp\mu\neq\emptyset$ and $\varphi:U\to\R^n$ be a nondegenerate analytic map. Then
\begin{equation}\label{equ:intersection-fractal}
\varphi^{-1}(\Bad(\rr)) \cap \supp \mu \neq \emptyset.
\end{equation}
\end{thm}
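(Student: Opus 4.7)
The strategy is to construct a nonempty compact set $K \subset U \cap \supp\mu$ with $\varphi(K) \subset \Bad(\vv r)$. Fix a small constant $c > 0$ to be determined later and set
\[
\Delta(q,\vv p) \;=\; \bigl\{t \in U : \max_{1\le i \le n} q^{r_i}|q\varphi_i(t)-p_i| < c\bigr\}.
\]
The task is then to exhibit $t \in U \cap \supp\mu$ lying outside $\bigcup_{(q,\vv p) \in \N \times \Z^n}\Delta(q,\vv p)$. I would organise this union by the dyadic height band $\mathcal H_N = \{(q,\vv p) : 2^N \le q < 2^{N+1}\}$ and remove the corresponding dangerous sets one band at a time via an inductive Cantor-like scheme built inside $\supp\mu$.

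The construction starts with a compact interval $J_0 \subset U$ meeting $\supp\mu$. At stage $N$, produce a finite disjoint collection $\mathcal F_N$ of closed intervals of length $\asymp 2^{-N}$, each meeting $\supp\mu$, refining $\mathcal F_{N-1}$, and avoiding every $\Delta(q,\vv p)$ with $q < 2^{N+1}$. The invariant to maintain is $\mu\bigl(I \cap \bigcup_{J \in \mathcal F_N} J\bigr) \ge \kappa\,\mu(I)$ for every $I \in \mathcal F_0$, with $\kappa > 0$ independent of $N$. Ahlfors regularity turns this into a combinatorial statement: the total $\mu$-mass removed from a parent interval $I \in \mathcal F_N$ when passing to stage $N{+}1$ must be at most $\varepsilon(c)\cdot\mu(I)$, with $\varepsilon(c)\to 0$ as $c \to 0$, so the losses form a convergent geometric series. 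Then $K = \bigcap_N \bigcup_{J \in \mathcal F_N} J$ is nonempty, lies in $\supp\mu$, and misses $\bigcup_{(q,\vv p)}\Delta(q,\vv p)$, yielding \eqref{equ:intersection-fractal}.

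The crucial ingredient is a counting and measure estimate of the form $\sum_{(q,\vv p) \in \mathcal H_N} \mu\bigl(\Delta(q,\vv p) \cap I\bigr) \le \varepsilon(c)\,\mu(I)$ for each parent $I \in \mathcal F_N$. To establish it I would combine three inputs. First, non-degeneracy of $\varphi$ gives, via Bernstein-type inequalities, a $(C',\alpha')$-good (in the sense of Kleinbock and Margulis) control on the functions $t \mapsto q\varphi_i(t) - p_i$, bounding the Lebesgue measure of any single $\Delta(q,\vv p)\cap I$. Second, Ahlfors regularity of $\mu$ upgrades such Lebesgue bounds to $\mu$-mass bounds on the same sets. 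Third and most delicately, the Dani-correspondence reformulation of $\Delta(q,\vv p)$ as a sublevel set for lattice points of the family $g_\tau u_{\varphi(t)} \Z^{n+1}$, where $g_\tau = \diag(e^\tau, e^{-r_1\tau}, \ldots, e^{-r_n\tau})$, opens the door to a weighted quantitative non-divergence argument controlling the joint mass $\mu\bigl(I \cap \bigcup_{(q,\vv p) \in \mathcal H_N}\Delta(q,\vv p)\bigr)$ uniformly in $N$.

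The main obstacle, I expect, is this last quantitative non-divergence step, because a naive summation over $(q,\vv p)$ is overwhelmed when many rational vectors cluster near a proper affine subspace osculating $\varphi(I)$: the dangerous intervals overlap heavily and a union bound becomes lossy. The fix is a dimension-reduction argument that, inside each parent interval, identifies the \emph{rank} of the nearby lattice configurations and treats low-rank clusters by an auxiliary sub-construction, appealing to analyticity of $\varphi$ to ensure $\varphi(I)$ does not remain in any proper affine subspace past the relevant scale. Once the dimension reduction is set up, the remainder is bookkeeping: sum the measure losses from each rank stratum within a parent $I$ and check that they stay below a fixed fraction of $\mu(I)$, closing the inductive invariant.
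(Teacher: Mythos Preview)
Your outline has the right skeleton---a Cantor construction inside $\supp\mu$ with removal rates controlled by a quantitative non-divergence (QND) estimate for the fractal measure---and you have correctly located the obstacle: Case~(2) of QND (existence of a short multivector over the whole parent interval) blocks the measure bound, and a naive union over $(q,\vv p)$ is too lossy. But the fix you sketch, a ``dimension-reduction argument that \ldots\ treats low-rank clusters by an auxiliary sub-construction,'' is precisely where the entire difficulty of the problem sits for $n\ge 3$ with general weights $\rr$, and you have not supplied a mechanism. Analyticity ensures the curve eventually escapes any fixed proper subspace, but this alone does not tell you the escape is fast enough at the relevant scale for the scheme to close; the obstructing multivector can be far shorter than the threshold $\rho$ you need in Case~(2), and bridging that gap is the whole point.

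The paper's resolution is not an inductive dimension reduction of the sort you describe. It introduces a \emph{second} diagonal action $b(t)=\diag(e^{-t/n},e^{t},e^{-t/n},\ldots,e^{-t/n})$ alongside the Dani flow $a(t)$, together with a linearising factor $z(x)=u_1(\varphi_2'(x),\ldots,\varphi_n'(x))$, and runs a two-parameter removal scheme $\hat{\cJ}_{p,q}$ with $0\le p\le q$: at step $q$ the dangerous condition is tested for $b(\beta'l)\,a(\beta(q{+}1))\,z(x)\,u(\varphi(x))\,\Z^{n+1}\notin K_{e^{-\epsilon\beta l}}$ over an auxiliary parameter $l$ (with $p=q-4l$), the threshold $e^{-\epsilon\beta l}$ loosening as $l$ grows. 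When Case~(2) of QND occurs for a surviving interval at a given $l$, a structural lemma on short multivectors under the unipotent $u(\Theta\vv e_1)$ (Lemma~\ref{lm:yang-lm5.8}) forces one of two outcomes, each of which implies that the \emph{same} interval already satisfied the dangerous condition for a strictly earlier pair $(p',q')$---hence was already removed. This self-referential bootstrapping, enabled by $b(t)$ and $z(x)$, is the missing idea; without it the Case-(2) obstruction does not dissolve, and your single-parameter dyadic scheme cannot close.
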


To sum up, we have that
\begin{equation}\label{equiv}
\text{Theorem~\ref{thm:main-theorem} holds}\quad\iff\quad\text{Theorem~\ref{thm:main-theorem2} holds.}
\end{equation}

\medskip

\subsection{Corollaries to the main theorem}

We begin with the following straightforward consequence of Theorem~\ref{thm:main-theorem} combined with Lemma~\ref{abswinprop}(ii)--(iv).

\begin{cor}\label{cor3}
Let $U\subset\R$ be an open interval and $\mu$ be an Ahlfors regular measure such that $\supp\mu\cap U\neq\emptyset$.  Further for each $i\in\N$ let $n_i\in\N$ be given, $\rr_i$ be an $n_i$-tuple of weights of approximation and $\vv f_i:U\to\R^{n_i}$ be an analytic nondegenerate map. Then,
$$
\dim\bigcap_{i\in\N}\vv f_i^{-1}(\Bad(\rr_i))\cap\supp\mu=\dim(\supp\mu)\,.
$$
In particular, when $\mu$ is Lebesgue measure, we have that
$$
\dim\bigcap_{i\in\N}\vv f_i^{-1}(\Bad(\rr_i))=1\,.
$$
\end{cor}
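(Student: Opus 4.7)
The plan is simply to chain Theorem~\ref{thm:main-theorem} with items (ii) and (iv) of Lemma~\ref{abswinprop}; no new ideas are needed, since all the heavy lifting sits inside the main theorem.

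First, I would apply Theorem~\ref{thm:main-theorem} to each pair $(\vv f_i,\rr_i)$ separately, obtaining that $\vv f_i^{-1}(\Bad(\rr_i))$ is absolute winning on $U$ for every $i\in\N$. Then, invoking Lemma~\ref{abswinprop}(ii), which states that a countable intersection of absolute winning sets is absolute winning, I would conclude that
$$
S \;:=\; \bigcap_{i\in\N}\vv f_i^{-1}(\Bad(\rr_i))
$$
is absolute winning on $U$. Finally, Lemma~\ref{abswinprop}(iv) applied to $S$ with the given Ahlfors regular $\mu$ (and the interval $U$, whose intersection with $\supp\mu$ is assumed non-empty) gives
$$
\dim\bigl(S\cap U\cap\supp\mu\bigr)=\dim(\supp\mu),
$$
whence $\dim(S\cap\supp\mu)\ge\dim(\supp\mu)$, with the reverse inequality being trivial. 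This yields the first claimed equality.

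For the special case in which $\mu$ is Lebesgue measure on $\R$, observe that $\mu$ is $1$-Ahlfors regular with $\supp\mu=\R$ and $\dim(\supp\mu)=1$. Since $S\subset U\subset\R=\supp\mu$, the first part of the corollary immediately gives $\dim S=\dim(S\cap\supp\mu)=1$. There is essentially no obstacle to overcome here: every step is a formal consequence of previously stated results, and all of the real work is concentrated in establishing Theorem~\ref{thm:main-theorem} itself.
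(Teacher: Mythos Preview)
Your proposal is correct and follows exactly the approach indicated in the paper, which states the corollary as a ``straightforward consequence of Theorem~\ref{thm:main-theorem} combined with Lemma~\ref{abswinprop}(ii)--(iv)'' without giving further details. Your write-up simply fills in those details.
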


Our next corollary resolves a problem on real numbers badly approximable by algebraic numbers of arbitrary degrees. In what follows, given a polynomial $P=a_nx^n+\dots+a_0\in\Z[x]$, $H(P)=\max\{|a_0|,\dots,|a_n|\}$ will denote the height of $P$.
Given an algebraic number $\alpha\in\C$, $H(\alpha)$ will denote the (naive) height of $\alpha$, which, by definition, is the height of the minimal defining polynomial $P$ of $\alpha$ over $\Z$.

To motivate the definition of real numbers badly approximable by algebraic numbers of degree $n$, recall the long standing Wirsing--Schmidt conjecture \cite[p.258]{MR568710} which states that for any $n\in\N$ and any real transcendental number $\xi$ there is a constant $C=C(\xi,n)>0$ such that
\begin{equation}\label{vb1.8}
|\xi-\alpha|\le C(n,\xi) H(\alpha)^{-n-1}
\end{equation}
holds for infinitely many algebraic numbers $\alpha$ of degree $\le n$. The $n=1$ case of the conjecture is a trivial consequence of the theory of continued fractions; for $n=2$ it was proved by Davenport and Schmidt. However, there are only partial results for $n>2$, see \cite{BadzSch}. Furthermore, it was established in \cite{MR1709049} that the Wirsing--Schmidt conjecture is true for almost every $\xi\in\R$ even if the right hand side of \eqref{vb1.8} was replaced by a decreasing function $\Psi$ of $H(\alpha)$ such that $\sum_{h=1}^\infty h^{n-1}\Psi(h)=\infty$, for example, $\Psi(H(\alpha)=H(\alpha)^{-n-1}\log^{-1} H(\alpha)$. Real numbers badly approximable by algebraic numbers of degree $n$ are introduced by reversing \eqref{vb1.8} with a suitably small constant. Namely, we define the set of real numbers badly approximable by algebraic numbers of degree $n$ as follows:
\begin{align*}
\cB_n^*&=\left\{\xi\in\R:\begin{array}{l}
\exists\ c_1=c_1(\xi,n)>0\text{ such that }|\xi-\alpha|\ge c_1H(\alpha)^{-n-1}\\
\text{for all real algebraic }\alpha\text{ with } \deg\alpha\le n
               \end{array}
\right\}.
\end{align*}
Let us also introduce the set of real number for which the Wirsing--Schmidt conjecture holds:
\begin{align*}
\cW_n^*&=\left\{\xi\in\R:\begin{array}{l}
\exists\ c_2=c_2(\xi,n)>0\text{ such that }|\xi-\alpha|<c_2H(\alpha)^{-n-1}\\
\text{for infinitely many real algebraic }\alpha\text{ with }\deg\alpha\le n
               \end{array}
\right\}
\end{align*}
and the following set of real numbers badly approximable in terms of small values of integral polynomials of degree $n$:
\begin{align*}
\cB_n&=\left\{\xi\in\R:\begin{array}{l}
\exists\ c_1=c_1(\xi,n)>0\text{ such that }|P(\xi)|\ge c_1H(P)^{-n}\\
\text{for all non-zero }P\in\Z[x],\ \deg P\le n
               \end{array}
\right\}.
\end{align*}
It is well known, see for example \cite{Beresnevich}, that
\begin{equation}\label{incl}
\cB_n\subset\cW_n^*\cap\cB_n^*.
\end{equation}

The existence of transcendental numbers lying in $\cB_n^*$ and furthermore in $\cB_n$ was conjectured by Bugeaud in \cite[\S10.2]{MR2136100} and established in \cite{Beresnevich}. More precisely, it was proved in \cite{Beresnevich} that the intersection of any finite number of the sets $\cB_n$ with any interval in $\R$ has full Hausdorff dimension. The strong form of Bugeaud's problem claims that the intersection of all the sets $\cB_n$ has full dimension, see \cite[Problem~2.9.11]{MR3618788}.
The following statement resolves this problem in full generality.

\begin{cor}\label{cor2}
Let $U\subset\R$ be any open interval and $\mu$ be any Ahlfors regular measure such that $\supp\mu\cap U\neq\emptyset$. Then,
\begin{equation*}%\label{vb704}
\dim \bigcap_{n=1}^\infty \cB_n\cap U\cap \supp\mu=\dim \bigcap_{n=1}^\infty \cB_n^*\cap \cW_n^*\cap U\cap \supp\mu=\dim(\supp\mu)\,.
\end{equation*}
In particular,%\\[-4ex]
$$
\dim \bigcap_{n=1}^\infty \cB_n\cap U=\dim \bigcap_{n=1}^\infty \cB_n^*\cap U=1\,.
$$
\end{cor}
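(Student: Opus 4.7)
The plan is to identify each $\cB_n$ with the pullback of a simultaneous badly approximable set under the Veronese curve, and then invoke Corollary~\ref{cor3}. For $n\in\N$, let $\vv f_n:U\to\R^n$ denote the Veronese map $\vv f_n(\xi)=(\xi,\xi^2,\ldots,\xi^n)$; since $1,\xi,\xi^2,\ldots,\xi^n$ are linearly independent over $\R$, each $\vv f_n$ is analytic and nondegenerate on $U$, and the weights $\rr_n=(1/n,\ldots,1/n)$ lie in the admissible simplex~\eqref{weights}.

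The key classical input is the quantitative form of Khintchine's transference principle, which I would use to establish the identity
$$
\cB_n=\vv f_n^{-1}\bigl(\Bad(\rr_n)\bigr)\qquad\text{for every }n\ge 1.
$$
Indeed, $\vv f_n(\xi)\in\Bad(\rr_n)$ asserts that there is $c>0$ with $\max_i q^{1/n}|q\xi^i-p_i|\ge c$ for all $(q,\vv p)\in\N\times\Z^n$, and transference converts this into the existence of $c'>0$ with $|a_0+a_1\xi+\cdots+a_n\xi^n|\ge c'H(\vv a)^{-n}$ for every nonzero $\vv a\in\Z^{n+1}$, and conversely. Writing $P(x)=a_0+a_1 x+\cdots+a_n x^n$ so that $H(P)=H(\vv a)$, the latter condition is precisely the defining property of $\xi\in\cB_n$.

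With this identification in hand, Corollary~\ref{cor3} applied to the countable family $\{(\vv f_n,\rr_n)\}_{n\in\N}$ immediately gives
$$
\dim\Bigl(\bigcap_{n=1}^\infty\cB_n\cap U\cap\supp\mu\Bigr)=\dim(\supp\mu),
$$
which is the first equality. The inclusion $\cB_n\subset\cB_n^*\cap\cW_n^*$ from~\eqref{incl} then yields the matching lower bound for the second equality, while the upper bound is trivial since $\bigcap_n\cB_n^*\cap\cW_n^*\cap U\cap\supp\mu\subset\supp\mu$. Taking $\mu$ to be Lebesgue measure on $U$ (which is $1$-Ahlfors regular with $\dim(\supp\mu)=1$) produces the two remaining Hausdorff-dimension-one assertions.

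I expect the only auxiliary technical input beyond Corollary~\ref{cor3} to be this transference identity, but that is a classical fact requiring no new ideas. All of the genuinely new work has been done in proving Theorem~\ref{thm:main-theorem}, whose absolute winning conclusion powers Corollary~\ref{cor3} and thereby supplies the crucial countable-intersection feature that was missing in earlier finite-intersection treatments of Bugeaud's problem.
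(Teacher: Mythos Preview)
Your argument is correct and follows essentially the same route as the paper: identify $\cB_n$ with $\vv f_n^{-1}(\Bad(n))$ via the Veronese curve and transference, apply Corollary~\ref{cor3} to the countable family, and use the inclusion~\eqref{incl} together with the trivial upper bound to handle the $\cB_n^*\cap\cW_n^*$ equality. The paper's proof is simply a terser version of yours, citing the transference identity as a known fact from \cite{Beresnevich} rather than outlining its derivation.
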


\begin{proof}
It is well known that $\cB_n=\vv f_n^{-1}(\Bad(n))$, where $\vv f_n(x)=(x,\dots,x^n)$, see \cite{Beresnevich}. Therefore, in view of \eqref{incl}, Corollary~\ref{cor2} is a special case of Corollary~\ref{cor3}.
\end{proof}

\begin{remark}
Not only does Corollary~\ref{cor2} resolve the strong form of Bugeaud's problem, it also allows one to detect numbers $\xi$ badly approximable by algebraic numbers of arbitrary degree with additional properties. For example, if $\mu$ is the uniform probability measure supported on the middle third Cantor set $\cK_3$, then Corollary~\ref{cor2} implies that $\cK_3$ contains a subset of $\xi$ of Hausdorff dimension $\log2/\log3=\dim(\cK_3)$ such that $\xi\in\cB_n$ for all $n$. Recall that $\xi\in\cK_3$ if and only if the ternary expansion of $\xi$ does not contain the digit $1$.
\end{remark}

\subsection{Outline of new ideas}

First of all, note that establishing the winning property of a set by investigating its intersections with fractals, which enables the reduction of Theorem~\ref{thm:main-theorem} to Theorem~\ref{thm:main-theorem2}, has only existed as a theoretical principle introduced in \cite{BHNS2018}. In this paper we develop an approach, which for the first time implements this principle in practice and thus enables us to resolve outstanding problems.
Our approach has several novel features, which we will try to outline here.

To begin with, note that the techniques developed to date rely on counting arguments. In particular, \cite{Beresnevich} and \cite{Yang2019} utilise lattice point counting together with a linearisation technique to estimate the number of the so-called `dangerous intervals'. As a result, previously developed tools are only suitable for studying badly approximable points in `continuous structures' but not in fractals, ruling out any possibility of using such tools for establishing Theorem~\ref{thm:main-theorem2}. In this paper we use instead a Quantitative Non-Divergence (QND) estimate for fractals derived from the paper \cite{MR2134453} of Kleinbock, Lindenstrauss and Weiss. More precisely, we use a `hybrid' version of the QND estimate appearing as Theorem~\ref{thm:QnD-fractal-local} below, in which Case~(1) is with respect to a given fractal measure $\mu$, while Case (2) is with respect to Lebesgue measure ({\em i.e.} $x$ is not restricted to the support of $\mu$). The required estimate for the number of `dangerous intervals' is deduced from the measure theoretic bound appearing as Case (1) of Theorem~\ref{thm:QnD-fractal-local}. Specifically this deduction is done within \eqref{vb3:14}--\eqref{bound1} and \eqref{vb3:28}--\eqref{vb834} below. However, there is a challenge in using the QND estimate: we have to rule out the algebraic obstructions appearing as Case~(2) of Theorem~\ref{thm:QnD-fractal-local}.

To outline how this is dealt with, first recall that the sets of badly approximable points are routinely studied with the help of the so-called Dani correspondence. This correspondence relates badly approximable points to bounded orbits of lattices $u(\vv x)\Z^{n+1}$, where $u(\vv x)$ is given by \eqref{u()}, under a certain diagonal flow. This paper is no exception and the corresponding flow is denoted by $a(t)$, see \eqref{eq:a(t)} for an explicit definition. In short, a point $\vv x\in\R^n$ is $\vv r$-badly approximable if and only if for some $\eps>0$ the orbit $a(t)u(\vv x)\Z^{n+1}$ is contained in a compact set $K_\eps$, defined by \eqref{K_eps}. However, in order to rule out the algebraic obstructions arising from Case~(2) of Theorem~\ref{thm:QnD-fractal-local}, we set up a second independent action by elements of a different diagonal subgroup, which is denoted by $b(t)$ and defined in \eqref{eq:b(t)}. In short, we allow the compact set $K_\eps$ to expand (that is $\eps$ to shrink) as we act by this second action. This is reflected in \eqref{pr2}, which essentially appears to be a condition defining `dangerous intervals'. The expansion of $K_\eps$ enables us to minimise the loss of $\mu$-mass in the corresponding Cantor set construction, which is defined in \S\ref{sec2.4}. At the same time, we demonstrate that it can be chosen in such a way that the aforementioned algebraic obstructions are ruled out. In fact, the algebraic obstructions at each step of the Cantor-like construction appear to be removed at previous steps of the Cantor set construction. We expect that further study of the action of $b(t)$ will lead to more applications to Diophantine approximation.

\subsection{Notation and conventions}\label{subsec-notation}

Throughout this paper, $\|\cdot\|$ will denote the Euclidean norm. Given $k\in\N$,
$\vv e_1,\dots, \vv e_{k}$ will denote the standard basis of $\R^{k}$. Given two vectors $\vv a=(a_1,\dots,a_k)$ and $\vv b=(b_1,\dots,b_k)\in\R^k$, by $\vv a\cdot\vv b=a_1b_1+\dots+a_kb_k$ we will denote the standard inner product of $\vv a$ and $\vv b$.

Given an integer $r$ such that $1\le r\le k$, as is well known, the collection
\begin{equation}\label{vb11}
\vv e_I:=\vv e_{i_1}\wedge\cdots\wedge\vv e_{i_r},\quad\text{where $I=\{i_1<\ldots<i_r\}$,}
\end{equation}
forms the standard basis of $\bigwedge^r\left(\R^{k}\right)$ -- the $r$th exterior power of $\R^k$. Given a multivector $\vv a_1\wedge\cdots\wedge\vv a_r\in \bigwedge^r\left(\R^{k}\right)$, by $(\vv a_1\wedge\cdots\wedge\vv a_r)_I$ we will denote its $\vv e_I$-coordinate. The inner product on $\R^k$ induces the inner product on $\bigwedge^r\left(\R^{k}\right)$, which we will also denote by `$\,\cdot\,$', so that \eqref{vb11} becomes an orthonormal basis. Further, $\|\vv a_1\wedge\cdots\wedge\vv a_r\|$ will denote the Euclidean norm of $\vv a_1\wedge\cdots\wedge\vv a_r$. Given a $k\times k$ matrix $L$ and a multi-vector $\vv v=\vv v_1\we\cdots\we\vv v_r\in\bigwedge^r\left(\R^k\right)$, we define $L\vv v$ as $L\vv v_1\we\cdots \we L\vv v_r$, where all $\vv v_i$ are viewed as columns. Further,
a collection $\vv v_1,\dots,\vv v_r\in\Z^{k}$ will be called {\em primitive} if $\vv v_1,\dots,\vv v_r$ are linearly independent and
$$
\Span_\Z(\vv v_1,\dots,\vv v_r)=\Z^k\cap\Span_\R(\vv v_1,\dots,\vv v_r).
$$
We will use the following form of the Bachmann--Landau notation. Given a normed space $X$ and $\varepsilon>0$, $O_X(\varepsilon)$ will stand for an element of $X$ which norm is $\le C\varepsilon$ for some constant $C>0$. We will omit the subscript if there is no risk of confusion.

Given a set $S$, $\#S$ will denote the number of elements in $S$. Given an interval $I\subset\R$ and a positive number $\lambda$, $|I|$ will denote the length of $I$, and $\lambda I$ will denote the interval of length $\lambda|I|$ and centred at the same point as $I$.

\subsection*{Acknowledgements.}
LY is supported in part by NSFC grant 11801384 and the Fundamental Research Funds for the
Central Universities YJ201769. LY is also grateful to the University of York for its hospitality during LY's visit when part of this work was completed. EN acknowledges support from ERC 2020 grant
HomDyn (grant no. 833423), ISF grant number  871/17, and ERC 2020 grant HD-App (grant no. 754475).
%%%%%%%%%%%%%%%%%%%%%%%%%%%%%%%%%%%%%%%%%%%%%%%%%%%%%%%%%%%%%%%%%%%%%%%%%%%%%%%%%%%%%%%%%%%%%%%%%%%%
%%%%%%%%%%%%%%%%%%%%%%%%%%%%%%%%%%%%%%%%%%%%%%%%%%%%%%%%%%%%%%%%%%%%%%%%%%%%%%%%%%%%%%%%%%%%%%%%%%%%
%%%%%%%%%%%%%%%%%%%%%%%%%%%%%%%%%%%%%  Homogeneous Dynamics   %%%%%%%%%%%%%%%%%%%%%%%%%%%%%%%%%%%%%%
%%%%%%%%%%%%%%%%%%%%%%%%%%%%%%%%%%%%%%%%%%%%%%%%%%%%%%%%%%%%%%%%%%%%%%%%%%%%%%%%%%%%%%%%%%%%%%%%%%%%
%%%%%%%%%%%%%%%%%%%%%%%%%%%%%%%%%%%%%%%%%%%%%%%%%%%%%%%%%%%%%%%%%%%%%%%%%%%%%%%%%%%%%%%%%%%%%%%%%%%%

\section{Intersections with fractals}\label{sec-intersections-fractals}

In view of the equivalence \eqref{equiv}, establishing Theorem~\ref{thm:main-theorem2} will be
the sole goal for the rest of this paper. The proof will be based on finding an appropriate
Cantor set inside the left hand side of \eqref{equ:intersection-fractal}. The goal of
this section is to introduce the construction of such a Cantor set (\S\ref{sec2.4}).
Establishing that it is non-empty will be the subject of subsequent sections. To begin with, we introduce several useful assumptions (\S\ref{prelim}), recall Badziahin-Velani's generalised Cantor sets (\S\ref{sec-cantor-like-construction}) and Dani's correspondence (\S\ref{Danicorr}), and state
quantitative non-divergence estimates (\S\ref{QnD_statements}).

\subsection{Preliminaries}\label{prelim}

From now on we fix any $(C,\alpha)$-Ahlfors regular measure $\mu$ such that $\supp\mu\cap U\neq\emptyset$, where $U$ is an open interval in $\R$ as in Theorem~\ref{thm:main-theorem2}. Throughout, without loss of generality, we will assume that
\begin{equation}\label{wcond}
r_1\ge \ldots\ge r_n>0\,.
\end{equation}
Indeed, if $r_n=0$, then $\Bad(\hat{\rr})\times\R\subset\Bad(\rr)$, where $\hat{\rr}=(r_1,\dots,r_{n-1})$, and thus $\hat\varphi^{-1}(\Bad(\hat{\rr}))\subset\varphi^{-1}(\Bad(\rr))$, where $\hat\varphi=(\varphi_1,\dots,\varphi_{n-1}):U\to\R^{n-1}$ is analytic and non-degenerate. Obviously, \eqref{equ:intersection-fractal} follows from $\hat\varphi^{-1}(\Bad(\hat\rr)) \cap \supp \mu \neq \emptyset$. Thus, if $r_n=0$, we reduce the proof of Theorem~\ref{thm:main-theorem2} to a lower dimension.

Since $\varphi$ is non-degenerate, $\varphi'_1$ is not identically zero. Since $\varphi'_1$ is analytic, it can vanish only at a countable number of points. Since $\supp\mu\cap U\neq\emptyset$ and $U$ is open, the set $\supp\mu\cap U$ is uncountable. Hence, there exists a point $x_0\in \supp\mu\cap U$ such that $\varphi'_1(x_0)\neq0$. In what follows $I_0\subset U$ will be a closed interval centred at $x_0$. Clearly, to establish \eqref{equ:intersection-fractal} it suffices to prove that $\varphi^{-1}(\Bad(\rr)) \cap \supp \mu \cap I_0\neq\emptyset$.
Since $\varphi'_1(x_0)\neq0$, by continuity, $\left|\varphi'_1(x)\right|$ is bounded above and below by positive constants for all $x$ in a neighborhood of $x_0$. Then, we can make a change of the variable $x$ to ensure that $\varphi_1(x)=x$ for all $x$ in this neighborhood. Therefore, by shrinking $I_0$ and making the change of variable if necessary, we can ensure that $3^{n+1}I_0\subset U$ and that
\begin{equation}\label{phi}
\varphi(x) = (x, \varphi_2(x), \dots, \varphi_n(x))\qquad\text{for all $x\in 3^{n+1}I_0\subset U$\,.}
\end{equation}
We will also assume that
\begin{equation}\label{lengthI_0}
%3^{n+2}|I_0|\le \rho_0\,,
3|I_0|\le \rho_0\,,
\end{equation}
where $\rho_0$ is as in \eqref{Ahlfors}. Since $I_0$ is centred at $\supp\mu$, by \eqref{Ahlfors}, we have that
\begin{equation}\label{I_0}
C^{-1}\left(\tfrac12|I_0|\right)^\alpha \le \mu(I_0) \le C\left(\tfrac12|I_0|\right)^\alpha\,.
\end{equation}
Also, observe that for any interval $I\subset I_0$, if $x\in I\cap\supp\mu$ then
\begin{equation}\label{incl1}
I\subset B(x,|I|)\subset 3I\,,
\end{equation}
where $B(x,|I|)$ is the closed ball centred at $x$ of radius $|I|$. Then, by \eqref{Ahlfors} and \eqref{incl1} applied to $B(x,|I|)$, we get that
\begin{equation}\label{Ahlfors2}
  \mu(I)\le C|I|^\alpha\quad\text{ and }\quad\mu(3I)\ge C^{-1}|I|^\alpha
\end{equation}
for any interval
%$I\subset 3^{n+2}I_0$
$I\subset I_0$ such that $I\cap\supp\mu\neq\emptyset$.

\subsection{Generalised Cantor sets}\label{sec-cantor-like-construction}

In this subsection we recall a construction of Cantor sets proposed in \cite{Badziahin-Velani2011}. Let $R \ge2$ be an integer. Given any closed bounded interval $I$, $\Par_R(I)$ will denote the collection of closed intervals obtained by dividing $I$ to $R$ closed subintervals of the same length $R^{-1}|I|$. More generally, if we have a finite collection $\cJ$ of closed bounded intervals, we define $\Par_R(\cJ)$ to be the union of $\Par_R(I)$ over all $I\in\cJ$. Thus, to obtain $\Par_R(\cJ)$ we have to sub-divide each interval in $\cJ$ into $R$ closed sub-intervals of equal length.

Now, to begin the construction of a Cantor set, let
$\cJ_0 = \{I_0\}$, where $I_0$ is a given closed bounded interval. Then recurrently for $q=0,1,2,\dots$ we perform two steps:

\begin{itemize}
\item Split each interval in $\cJ_q$ into $R$ closed equal subintervals to obtain $\cI_{q+1}$, {{\em i.e.}}
\begin{equation*}%\label{S1}
\cI_{q+1}:=\Par_R\left(\cJ_q\right)\,.
\end{equation*}
\item Remove a sub-collection $\hat\cJ_{q}$ from $\cI_{q+1}$ to obtain
\begin{equation}\label{S2}
\cJ_{q+1}:=\cI_{q+1}\setminus \hat{\cJ}_{q}\,.
\end{equation}
\end{itemize}
Observe that
\begin{equation}\label{lengthIq+1}
\left|I\right|=R^{-q-1}|I_0|\qquad\text{for all $I\in\cI_{q+1}$}
\end{equation}
and that the intervals in $\cI_{q+1}$ can intersect each other only at end-points.

The sequence $(\cJ_q)_{q\ge0}$ defines the limit set
\begin{equation}\label{Kinfty}
\cK_\infty := \bigcap_{q =0}^{\infty} ~\bigcup_{I \in \cJ_q} I\,.
\end{equation}
Now, let $ \bh = \left( \h_{p,q}\right)_{0\le p \le q}$ be a sequence of non-negative integers indexed by $p$ and $q$.
For $q\ge0$ write $\hat{\cJ}_{q}$ as the following union
\begin{equation}\label{union}
\textstyle\hat{\cJ}_{q}=\bigcup_{p=0}^{q} \hat{\cJ}_{p, q}\,.
\end{equation}
If for any $0 \le p \le q$ we have that
\begin{equation}\label{hpqbound}
%\#\{I_{q+1}\in\hat{\cJ}_{p,q}: I_{q+1}\subset J_p\}\le \h_{p,q}\quad\text{for any $J_p \in \cJ_p$,}
\#\left\{\Iq\in\hat{\cJ}_{p,q}: \Iq\subset \Jp\right\}\le \h_{p,q}\quad\text{for all $\Jp \in \cJ_p$,}
\end{equation}
then $\left(\cJ_q\right)_{q\ge0}$ will be called an {\em $(R, \bh)$-sequence} and its limit set \eqref{Kinfty}
will be called an {\em $(R, \bh)$-Cantor set}. For obvious reasons, the parameter $R$ will be called the {\em splitting rate} and the quantities $\h_{p,q}$ will be called {\em removal rates} of the construction of $\cK_\infty$.
We will use the following result established in \cite{Badziahin-Velani2011}.

\begin{thm}[Theorem~3 in \cite{Badziahin-Velani2011}]\label{thm:generalized-cantor-set-estimate}
Given an integer $R\ge2$ and\/ a sequence of non-negative integers $\bh = \left( \h_{p,q}\right)_{0\le p \le q}$, let $t_0 = R - \h_{0,0}$ and
\begin{equation}\label{t_q}
t_q := R - \h_{q,q} - \sum_{j=1}^q \frac{\h_{q-j, q}}{\prod_{i=1}^j t_{q-i}}\qquad\text{for }q \ge 1\,.
\end{equation}
Suppose that $t_q >0$ for all $q \ge 0$. Then every $(R, \bh)$-Cantor set is nonempty.
\end{thm}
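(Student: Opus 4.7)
The plan is to derive a clean global lower bound on $\#\cJ_q$ and then invoke compactness. By the subdivision step we have $\#\cI_{q+1}=R\,\#\cJ_q$ exactly; and summing the bound \eqref{hpqbound} over $J\in\cJ_p$ gives $\#\hat{\cJ}_{p,q}\le h_{p,q}\,\#\cJ_p$. Combined with \eqref{S2} and \eqref{union}, this yields the master recurrence
$$
\#\cJ_{q+1} \;\ge\; R\,\#\cJ_q \;-\; \sum_{p=0}^{q} h_{p,q}\,\#\cJ_p.
$$

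Next, I would set $T_q:=\prod_{q'=0}^{q-1} t_{q'}$ (with $T_0:=1$) and $g_q:=\#\cJ_q/T_q$, so that $T_{q+1}=t_q T_q$ and $T_p/T_q=1/\prod_{q'=p}^{q-1}t_{q'}$. Substituting $\#\cJ_p=g_p T_p$ into the master recurrence and dividing by $T_q$ gives
$$
g_{q+1}\, t_q \;\ge\; (R-h_{q,q})\,g_q \;-\; \sum_{p=0}^{q-1} c_{p,q}\,g_p,\qquad c_{p,q}:=\frac{h_{p,q}}{\prod_{q'=p}^{q-1} t_{q'}},
$$
and the defining identity \eqref{t_q} rewrites as $R-h_{q,q}=t_q+\sum_{p=0}^{q-1}c_{p,q}$. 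The key step is to prove by strong induction on $q$ that the normalised sequence $g_q$ is non-decreasing. Assuming $g_0\le g_1\le\cdots\le g_q$, I would bound each $g_p$ by $g_q$ in the subtracted sum and use the identity above to obtain
$$
g_{q+1}\,t_q\;\ge\; (R-h_{q,q})\,g_q \;-\; g_q\sum_{p=0}^{q-1}c_{p,q}\;=\;g_q\,t_q.
$$
Since $t_q>0$ by hypothesis, this yields $g_{q+1}\ge g_q$, closing the induction. Together with $g_0=1$, this gives $g_q\ge 1$, hence $\#\cJ_q\ge T_q>0$ for every $q$; as $\#\cJ_q$ is a non-negative integer, $\cJ_q\ne\emptyset$. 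Finally, because $\bigl(\bigcup_{I\in\cJ_q}I\bigr)_{q\ge 0}$ is a nested sequence of non-empty compact subsets of $\R$, compactness forces $\cK_\infty=\bigcap_q\bigcup_{I\in\cJ_q}I$ to be non-empty, which is the desired conclusion.

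The main obstacle is that neither a per-interval lower bound on the descendants of a single $J\in\cJ_p$ nor a direct induction on $\#\cJ_q\ge T_q$ closes: removals coming from ancestor levels $p'<p$ may concentrate inside a single $J\in\cJ_p$, and the subtractive term $\sum h_{p,q}\#\cJ_p$ in the master recurrence grows with the $\#\cJ_p$'s, so pure lower bounds on the $g_p$ are insufficient. The right invariant is the monotonicity of the normalised counts $g_q$, which is tailor-made to absorb the cross terms through the defining identity for $t_q$.
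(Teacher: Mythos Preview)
The paper does not supply its own proof of this theorem; it is quoted verbatim as Theorem~3 from \cite{Badziahin-Velani2011} and used as a black box in \S\ref{theproof}. So there is no in-paper argument to compare against.

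That said, your proof is correct and is essentially the argument that appears in the original Badziahin--Velani paper. The master recurrence
\[
\#\cJ_{q+1}\ \ge\ R\,\#\cJ_q-\sum_{p=0}^{q}h_{p,q}\,\#\cJ_p
\]
is valid because every $I\in\hat\cJ_{p,q}\subset\cI_{q+1}$ lies in a unique $J\in\cJ_p$ (the construction is nested), so summing \eqref{hpqbound} over $J\in\cJ_p$ gives $\#\hat\cJ_{p,q}\le h_{p,q}\#\cJ_p$. Your normalisation $g_q=\#\cJ_q/T_q$ with $T_q=\prod_{q'<q}t_{q'}$ is exactly the right invariant: the identity $R-h_{q,q}=t_q+\sum_{p<q}c_{p,q}$ is a direct rewriting of \eqref{t_q}, and since all $c_{p,q}\ge0$ and $t_q>0$, the monotonicity step $g_{q+1}\ge g_q$ goes through cleanly. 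The base case $g_1\ge g_0=1$ follows from $\#\cJ_1\ge R-h_{0,0}=t_0$. Finally, the compactness argument is sound because the unions $\bigcup_{I\in\cJ_q}I$ form a nested sequence of non-empty compact sets.

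One small point worth making explicit: the inequality $\#\cJ_{q+1}\ge\#\cI_{q+1}-\#\hat\cJ_q$ uses only that $\cJ_{q+1}=\cI_{q+1}\setminus\hat\cJ_q$; and $\#\hat\cJ_q\le\sum_p\#\hat\cJ_{p,q}$ holds regardless of whether the $\hat\cJ_{p,q}$ overlap, so no disjointness assumption on the decomposition \eqref{union} is needed.
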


\subsection{Dani's correspondence}\label{Danicorr}

In what follows $X_{n+1} = \SL{n+1}{\R}/\SL{n+1}{\Z}$ will denote the homogeneous space of unimodular lattices in $\R^{n+1}$. For every $g\in \SL{n+1}{\R}$, the corresponding lattice is given by $\lat{g}\in X_{n+1}$.
As is well known, the space $X_{n+1}$ is not compact. By Mahler's criterion, every compact subset of $X_{n+1}$ is contained in
\begin{equation}\label{K_eps}
K_{\epsilon} := \left\{ \Lambda \in X_{n+1}: \inf_{\vv v\in\Lambda,\,\vv v\neq\vv0}\|\vv v\|\ge\varepsilon \right\}
\end{equation}
for some $\epsilon>0$.
As we have already mentioned, Dani \cite{MR794799} discovered a correspondence between points in $\Bad(n)$ and bounded orbits of diagonal flows on $X_{n+1}$. In this paper we will use a more general version of this correspondence -- Lemma~\ref{prop:dani-corrspondence} below, which was formally established in \cite{MR1646538}. Given $\vv x \in \R^n$, define the matrix
\begin{equation}\label{u()}
u(\vv x):= \begin{bmatrix}1 & \vv x \\ & \I_n \end{bmatrix} \in \SL{n+1}{\R}\,,
\end{equation}
where $\I_n$ stands for the $n\times n$ identity matrix and $\vv x$ is regarded as a row.
Further, for $t \in \R$ define the diagonal matrix
\begin{equation}
\label{eq:a(t)}
a(t):= \diag\left\{e^t, e^{-r_1t}, \dots,e^{-r_nt} \right\}=\begin{bmatrix} e^t & & &  \\ & e^{-r_1t} & &  \\ & & \ddots &  \\ & & & e^{-r_nt}  \end{bmatrix} \in \SL{n+1}{\R}.
\end{equation}

\begin{lemma}\label{prop:dani-corrspondence}
Let $\vv x \in \R^n$. Then, $\vv x\in\Bad(\vv r)$ if and only if
$$
\text{$\left\{ \lat{a(t)u(\vv x)} : t >0 \right\}~\subset~ K_\epsilon$ \quad for some $\epsilon>0$,}
$$
that is $\left\{ \lat{a(t)u(\vv x)} : t >0 \right\}$ is bounded in $X_{n+1}$.
\end{lemma}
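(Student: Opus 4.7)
The plan is to combine Mahler's compactness criterion with a direct computation of the lattice vectors in $a(t)u(\vv x)\Z^{n+1}$.

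By Mahler's criterion, the orbit $\{a(t)u(\vv x)\Z^{n+1} : t > 0\}$ lies in some $K_\varepsilon$ if and only if the length of the shortest nonzero lattice vector has a uniform positive lower bound as $t$ varies. First I would unpack a typical lattice vector: for $\vv m = (m_0, m_1, \dots, m_n)^T \in \Z^{n+1}\setminus\{\vv 0\}$ with $\vv m' := (m_1, \dots, m_n)$,
\[
 a(t)u(\vv x)\vv m = \left(e^t(m_0 + \vv x \cdot \vv m'),\ e^{-r_1 t}m_1,\ \dots,\ e^{-r_n t}m_n\right)^T.
\]
Passing between the Euclidean and sup norms (which differ only by a dimensional constant), the bounded-orbit condition becomes the assertion that there exists $\varepsilon > 0$ with
\[
 \max\{e^t|m_0 + \vv x \cdot \vv m'|,\ e^{-r_1 t}|m_1|,\ \dots,\ e^{-r_n t}|m_n|\} \ge \varepsilon
\]
for every $t > 0$ and every nonzero $\vv m \in \Z^{n+1}$. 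The case $\vv m' = \vv 0$ is automatic for $t > 0$ and $\varepsilon \le 1$ (since then $e^t|m_0| \ge 1$), so only $\vv m' \ne \vv 0$ is nontrivial.

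To match the nontrivial case with the \emph{simultaneous} form of $\Bad(\vv r)$ I would pass to the transposed lattice $(a(t)u(\vv x))^{-T}\Z^{n+1}$, whose first minimum is, by Minkowski's second theorem applied to the unimodular lattice $a(t)u(\vv x)\Z^{n+1}$, comparable to that of the original lattice. A short calculation identifies the transposed lattice as the one generated by
\[
 \left(e^{-t}q,\ e^{r_1 t}(p_1 - qx_1),\ \dots,\ e^{r_n t}(p_n - qx_n)\right)^T, \qquad (q, \vv p) \in \Z^{n+1}.
\]
For the forward direction, given $\vv x \in \Bad(\vv r)$ with constant $c$ and $q \ne 0$, either $e^{-t}|q| \ge \varepsilon$ (done) or $e^t > |q|/\varepsilon$, in which case $\max_i e^{r_i t}|qx_i - p_i| \ge \max_i (|q|/\varepsilon)^{r_i}|qx_i - p_i| \ge c\,\varepsilon^{-\max_i r_i}$, and this exceeds $\varepsilon$ provided $\varepsilon$ was chosen small enough in terms of $c$. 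For the reverse direction, if $\vv x \notin \Bad(\vv r)$ I extract a sequence $(q_k, \vv p_k)$ with $c_k := \max_i q_k^{r_i}|q_k x_i - p_{k,i}| \to 0$, and a weighted-mean choice of $t_k$ balancing $q_k$ against the quantities $|q_k x_i - p_{k,i}|^{-1/r_i}$ drives every coordinate of the transposed lattice vector to $0$, forcing the orbit out of every $K_\varepsilon$.

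The only technical hinge is the duality step relating the original and transposed lattices via Minkowski's second theorem; once this identification is in hand both implications reduce to routine balancing of $t$ against the integer data $(q, \vv p)$.
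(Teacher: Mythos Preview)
Your approach is correct and matches the paper's, which does not give a self-contained argument but simply cites Mahler's transference together with a result of Kleinbock; your passage to the contragredient lattice $(a(t)u(\vv x))^{-T}\Z^{n+1}$ is precisely that transference made explicit. Two minor slips that do not affect the conclusion: in the forward direction the lower bound should be $c\,\varepsilon^{-\min_i r_i}$ (or simply $c$, since $\varepsilon\le1$) rather than $c\,\varepsilon^{-\max_i r_i}$, because the index realising $\max_i q^{r_i}|qx_i-p_i|\ge c$ need not carry the largest weight; and the comparability of $\lambda_1(\Lambda)$ with $\lambda_1(\Lambda^*)$ is not quite Minkowski's second theorem alone but also uses the duality $\lambda_i(\Lambda)\,\lambda_{n+2-i}(\Lambda^*)\asymp 1$ --- or, more cleanly, the observation that $g\mapsto g^{-T}$ induces a homeomorphism of $X_{n+1}$, so the two orbits are bounded simultaneously.
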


To be precise, Lemma~\ref{prop:dani-corrspondence} is a consequence of
Mahler's transference \cite{MR0001241} and Theorem~1.5 in \cite{MR1646538}.
See also \cite[Appendix]{BPV} and \cite[Appendix A]{Beresnevich}.

The proof of our main result will also make use of a linearisation technique on $X_{n+1}$ which will utilize the action on $X_{n+1}$ by
\begin{equation}\label{eq:b(t)}
b(t) := \diag\left\{e^{-t/n},e^{t},e^{-t/n},\dots,e^{-t/n}\right\}\in\SL{n+1}{\R}\,.
\end{equation}
The linearisation technique will also make use of the map $z: I_0 \to \SL{n+1}{\R}$ given by
\begin{equation}
\label{eq:z(x)}
z(x) := u_1(\psi(x))\,,
\end{equation}
where
\begin{equation}\label{eq:psi}
\psi(x) := \left(\varphi'_2(x), \dots, \varphi'_n(x)\right)
\end{equation}
and
\begin{equation*}%\label{eq:u1}
  u_1(\vv y) := \begin{bmatrix} 1 & 0 & 0 & \cdots & 0 \\
   & 1 & y_2 & \cdots & y_{n} & \\  & & 1 & & \\ & & & \ddots & \\ & & & & 1 \end{bmatrix} \in \SL{n+1}{\R}
\end{equation*}
for $\vv y = (y_2, \dots, y_n)\in\R^{n-1}$.

Before moving on, we state several conjugation equations, which involve the actions by $a(t)$, $b(t)$ and $z(x)$.

\begin{lemma}\label{LemmaConjugate}
For any $t>0$, $x\in 3^{n+1}I_0$, $\vv x=(x_1,\dots,x_n)\in\R^n$ and $\vv y=(y_2,\dots,y_n)\in\R^{n-1}$ we have that
\begin{align}
a(t) u(\vv x) a(-t) &= u\left(e^{(1+r_1)t} x_1, \dots, e^{(1+r_n)t} x_n\right)\,,\label{eq:conj-u-a}\\[0ex]
a(t) u_1(\vv y) a(-t) &= u_1\left(e^{(r_2-r_1)t} y_2, \dots, e^{(r_n - r_1)t} y_n \right)\,,\label{eq:conj-u1-a}\\[0ex]
b(t) u(\vv x) b(-t) &= u\left(e^{-(1+1/n)t}x_1, x_2, \dots, x_n\right)\,,\label{eq:conj-u-b}\\[0ex]
b(t) u_1(\vv y) b(-t) &= u_1\left(e^{(1+1/n)t} \vv y\right)\,,\label{eq:conj-u1-b}\\[0ex]
z(x) u\left(\varphi'(x)\right) z^{-1}(x) &= u(\vv e_1)\,.\label{conj}
\end{align}
\end{lemma}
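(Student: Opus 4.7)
The plan for Lemma~\ref{LemmaConjugate} is purely computational: each identity follows from a direct matrix multiplication, and the lemma serves as a collection of bookkeeping identities to be used later in the paper.

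First I would handle the four conjugation identities \eqref{eq:conj-u-a}--\eqref{eq:conj-u1-b}. They all share the same mechanism: for any diagonal matrix $D=\diag(d_1,\dots,d_{n+1})$ and any matrix $A$, one has $(DAD^{-1})_{ij}=(d_i/d_j)\,A_{ij}$, so conjugation by $a(t)$ or $b(t)$ preserves the shape of the unipotent matrices $u(\vv x)$ and $u_1(\vv y)$ and merely rescales each non-trivial entry by an explicit factor. Reading the diagonal entries off from \eqref{eq:a(t)} and \eqref{eq:b(t)}: for \eqref{eq:conj-u-a}, the $(1,j+1)$-entry $x_j$ acquires a factor $e^t/e^{-r_j t}=e^{(1+r_j)t}$; for \eqref{eq:conj-u1-a}, the $(2,j+1)$-entry $y_j$ (with $j\ge2$) acquires $e^{-r_1 t}/e^{-r_j t}=e^{(r_j-r_1)t}$; for \eqref{eq:conj-u-b}, the $(1,2)$-entry $x_1$ acquires $e^{-t/n}/e^t=e^{-(1+1/n)t}$ while for $j\ge2$ the factor is $e^{-t/n}/e^{-t/n}=1$; and for \eqref{eq:conj-u1-b}, every non-trivial entry $y_j$ acquires $e^t/e^{-t/n}=e^{(1+1/n)t}$. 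Each of these identities is then a one-line verification.

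For \eqref{conj} I would rewrite the statement as $z(x)\,u(\varphi'(x))=u(\vv e_1)\,z(x)$ and expand both products directly. By the normalisation \eqref{phi}, which applies since $x\in 3^{n+1}I_0$, one has $\varphi_1(x)=x$, hence $\varphi'(x)=(1,\psi(x))$, and the first row of $u(\varphi'(x))$ is $(1,1,\varphi'_2(x),\dots,\varphi'_n(x))$. On the left-hand side, $u_1(\psi(x))$ has a trivial first row, so the first row of the product equals the first row of $u(\varphi'(x))$; the second row equals the second row of $u_1(\psi(x))$ because $u(\varphi'(x))$ is the identity below its first row; and each subsequent row $k\ge3$ equals $\vv e_k$. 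On the right-hand side, $u(\vv e_1)$ simply adds the second row of $u_1(\psi(x))$ to its first, giving $(1,0,\dots,0)+(0,1,\psi_2(x),\dots,\psi_n(x))=(1,1,\psi_2(x),\dots,\psi_n(x))$, and leaves rows $2,\dots,n+1$ of $u_1(\psi(x))$ unchanged. The two sides match row by row.

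There is no real obstacle here. The one point worth flagging is that \eqref{conj} crucially relies on the normalisation $\varphi_1(x)=x$ from \eqref{phi}, which guarantees $\varphi'_1(x)=1$; without it the right-hand side would have to be replaced by $u(\varphi'_1(x)\,\vv e_1)$, and the clean appearance of $u(\vv e_1)$ in subsequent applications would be lost.
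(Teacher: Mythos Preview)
Your proposal is correct and follows exactly the approach indicated in the paper, which simply states that the proof is elementary, obtained by inspecting the equations one by one, and leaves the details to the reader. You have carried out precisely those computations, including the key observation that \eqref{conj} relies on the normalisation $\varphi_1(x)=x$ from \eqref{phi}.
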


The proof of these equations is elementary and obtained by inspecting them one by one. The details are left to the reader.

\subsection{Quantitative non-divergence estimates}\label{QnD_statements}

In this section we state auxiliary tools that will be used to estimate the removal rates $\h_{p,q}$ of the Cantor set that will be constructed in \S\ref{sec2.4}. Let
\begin{equation*}%\label{SET}
W(\tau,J,\delta):=\left\{ x \in J: \lat{g_{\vv \tau} z(x) u(\varphi(x))} \not\in K_{\delta} \right\}\,,
\end{equation*}
where $J\subset I_0$ is an interval, $z(x)$ is defined by \eqref{eq:z(x)}, $u(\varphi(x))$ is given by \eqref{u()} and
\begin{equation}\label{gtau}
g_{\tau} := \diag\left\{e^{\tau_1} , e^{\tau_2}, \dots, e^{\tau_{n+1}}\right\} \in \SL{n+1}{\R}
\end{equation}
for some $\tau = (\tau_1, \tau_2, \dots, \tau_{n+1})$. Note that $g_{\tau} \in \SL{n+1}{\R}$ if and only if
\begin{equation}\label{tau}
\tau_1+\dots+\tau_{n+1} =0\,.
\end{equation}

\begin{thm}[Local Estimate]\label{thm:QnD-fractal-local}
Suppose that $\mu$, $\varphi$ and $I_0$ are the same as in {\rm\S\ref{prelim}}.
Then, provided $I_0$ is sufficiently small, there exist constants $M_1 >0$ and $\gamma >0$ such that  any $\delta >0$,
any $\tau = (\tau_1, \tau_2, \dots, \tau_{n+1})$ satisfying \eqref{tau}, any $0\le\rho\le1$ and any subinterval $J \subset I_0$ at least one of the following two conclusions holds\/{\rm:}\\[-2ex]
\begin{enumerate}
\item[$(1)$] $\mu(W(\tau,J,\delta)) \le M_1 \left(\dfrac{\delta}{\rho}\right)^{\gamma} \mu(3J);$\\[-1ex]
\item[$(2)$] there exist $1 \le i \le n$ and $\vv v = \vv v_1 \wedge \cdots \wedge \vv v_i \in \bigwedge^i \left(\Z^{n+1}\right) \setminus \{ \vv 0\}$ such that
$$
\sup_{x \in J} \|g_{\vv \tau} z(x) u(\varphi(x)) \vv v\| < \rho\,.
$$
\end{enumerate}
\end{thm}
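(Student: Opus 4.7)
The strategy is to adapt the Kleinbock--Lindenstrauss--Weiss (KLW) quantitative non-divergence machinery of \cite{MR2134453} to the present hybrid setting. If $J\cap\supp\mu=\emptyset$ then $\mu(W(\tau,J,\delta))=0$ and case (1) holds trivially, so assume $J\cap\supp\mu\ne\emptyset$. Suppose also that case (2) fails, i.e.\ for every $1\le i\le n$ and every nonzero $\vv v\in\bigwedge^i(\Z^{n+1})$,
\begin{equation}\label{PLAN:neg}
\sup_{x\in J}\|g_\tau z(x)u(\varphi(x))\vv v\|\ge\rho.
\end{equation}
The goal is to deduce case (1) from this hypothesis.

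The essential input to the KLW machinery is that each coordinate of the map $x\mapsto g_\tau z(x)u(\varphi(x))\vv v$ is $(C_0,\alpha_0)$-good on $I_0$ with respect to $\mu$, for constants $C_0,\alpha_0>0$ uniform over $\tau$ and $\vv v$. These coordinates are polynomials in $\varphi_j(x)$ and $\varphi'_j(x)$, hence analytic on $U\supset 3^{n+1}I_0$. By the standard theory of good functions with respect to Federer---in particular, Ahlfors regular---measures, and by taking $I_0$ sufficiently small, such a uniform bound exists; the exponent $\gamma$ in the statement is then built from $\alpha_0$ and the Ahlfors exponent $\alpha$.

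To transfer \eqref{PLAN:neg} to $\supp\mu$, pick $y\in J\cap\supp\mu$ and set $B:=B(y,|J|)$, so that $J\subset B\subset 3J$. The $(C_0,\alpha_0)$-good estimate applied to $B$ (which is centred on $\supp\mu$), combined with $\sup_B\|g_\tau z(x)u(\varphi(x))\vv v\|\ge\rho$, yields
$$
\sup_{x\in B\cap\supp\mu}\|g_\tau z(x)u(\varphi(x))\vv v\|\ge c\rho
$$
for a uniform $c>0$ depending only on $C_0,\alpha_0$. This is precisely the classical KLW non-planarity hypothesis on the ball $B$, so the KLW theorem produces $\mu(W(\tau,B,\delta))\le M'(\delta/\rho)^{\gamma}\mu(3B)$. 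Since $W(\tau,J,\delta)\subset W(\tau,B,\delta)$ and $\mu(3B)\le C_\mu\mu(3J)$ by Ahlfors regularity (both sets have diameter comparable to $|J|$ and meet $\supp\mu$), the desired bound of case (1) follows with a suitable $M_1$.

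The chief technical obstacle is establishing the uniform $(C_0,\alpha_0)$-good property for the full family of coordinate functions, with constants independent of $\tau$, $i$, and $\vv v$. This requires a careful analysis of the Taylor expansions of the analytic entries of $g_\tau z(x)u(\varphi(x))$ on a small neighbourhood of $x_0$, combined with the Ahlfors regularity of $\mu$ and the non-degeneracy of $\varphi$. Uniformity in $\tau$ and $\vv v$ is what pins down the exponent $\gamma$ in the statement and also what forces the hypothesis that $I_0$ be taken sufficiently small.
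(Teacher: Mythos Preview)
Your plan is correct and matches the paper's approach almost exactly: the paper reduces Theorem~\ref{thm:QnD-fractal-local} to a general QnD statement for analytic maps (Theorem~\ref{KleinbockQnD3}), which in turn rests on the KLW theorem applied after showing that $\mu$ is absolutely decaying and Federer via its Ahlfors regularity (Lemmas~\ref{lem613} and~\ref{lem613B}). Your recentring trick $B=B(y,|J|)$ with $J\subset B\subset 3J$ and your ``transfer'' step are precisely how the paper passes from a supremum over $J$ to one over $\supp\mu\cap B$; the uniformity in $\tau$ and $\vv v$ is obtained, as you anticipate, because all coordinate functions lie in the span of the finitely many minors of $\HH_0(x)=z(x)u(\varphi(x))$ (Corollary~\ref{prop614+}).

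One point worth sharpening: the transfer you describe, from $\sup_{x\in B}\|H(x)\vv v\|\ge\rho$ to $\sup_{x\in B\cap\supp\mu}\|H(x)\vv v\|\ge c\rho$, does \emph{not} follow from the coordinates being merely $(C_0,\alpha_0)$-good in the standard KLW sense (where the denominator is $\|f\|_{\mu,B}$). It requires the stronger property of being \emph{absolutely} $(C_0,\alpha_0)$-good (denominator $\|f\|_B$), which is what Proposition~7.3 of \cite{MR2134453} actually yields for non-degenerate maps with respect to absolutely decaying Federer measures. This distinction is exactly what makes the estimate ``hybrid'' (conclusion~(2) unrestricted to $\supp\mu$), and the paper isolates it explicitly as Theorem~\ref{KleinbockQnD2}. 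Your argument implicitly uses the absolute version, so there is no real gap, only a terminological one.
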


%\medskip

\begin{thm}[Global Estimate]\label{thm:QnD-fractal-global}
Suppose that $\mu$, $\varphi$ and $I_0$ are the same as in {\rm\S\ref{prelim}}.
Then, provided $I_0$ is sufficiently small, there exist constants $M_2 >0$ and $\gamma >0$ such that for any $\delta >0$ and any
$\tau = (\tau_1, \tau_2, \dots, \tau_{n+1})$ satisfying \eqref{tau} such that
\begin{equation}\label{tau_cond}
\tau_1 >0\qquad\text{and}\qquad\tau_i <0\quad\text{for $i=3,\dots, n+1$}
\end{equation}
one has that
$
\mu(W(\tau,I_0,\delta)) \le M_2 \delta^{\gamma}.
$
\end{thm}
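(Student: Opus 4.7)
The plan is to deduce Theorem~\ref{thm:QnD-fractal-global} from the Local Estimate (Theorem~\ref{thm:QnD-fractal-local}) by ruling out its Case~(2) under the sign conditions~\eqref{tau_cond}. First I would apply Theorem~\ref{thm:QnD-fractal-local} with $J=I_0$ and $\rho=\rho_{*}$ for a constant $\rho_{*}>0$ depending only on $I_0$, $\varphi$ and $n$, to be fixed below. If Case~(1) of the local dichotomy holds, then combining it with $\mu(3I_0)\le C|I_0|^\alpha$ from \eqref{Ahlfors2} gives
$$\mu(W(\tau,I_0,\delta))\le M_1(\delta/\rho_{*})^\gamma\mu(3I_0)\le M_1C|I_0|^\alpha\rho_{*}^{-\gamma}\delta^\gamma,$$
the desired bound with $M_2:=M_1C|I_0|^\alpha\rho_{*}^{-\gamma}$, independent of $\tau$. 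The entire problem reduces to choosing $\rho_{*}$ so that Case~(2) is ruled out.

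The key observation is that under \eqref{tau_cond} together with the trace constraint $\sum_{j=1}^{n+1}\tau_j=0$ one has $\tau_1+\tau_2=\sum_{j\ge 3}|\tau_j|\ge 0$, and more generally $\tau_{\{1,2\}\cup J}=\sum_{j\in\{3,\dots,n+1\}\setminus J}|\tau_j|\ge 0$ for every $J\subset\{3,\dots,n+1\}$, while $\tau_{\{1\}}=\tau_1>0$. Hence for index sets of the form $I=\{1,2\}\cup J$ (when $i\ge 2$), or $I=\{1\}$ (when $i=1$), the Pl\"ucker weight satisfies $e^{\tau_I}\ge 1$, and therefore $\|g_\tau z(x)u(\varphi(x))\vv v\|\ge|(z(x)u(\varphi(x))\vv v)_I|$. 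Writing $F(x):=z(x)u(\varphi(x))$ and $V=[\vv v_1\,|\,\cdots\,|\,\vv v_i]$, the minor $(F(x)\vv v)_I$ has its top two rows given by $w_0^{(k)}(x):=v_0^{(k)}+v_1^{(k)}x+\sum_{j\ge 2}v_j^{(k)}\varphi_j(x)$ and its derivative $(w_0^{(k)})'(x)$, with remaining rows equal to the constants $v_{j-1}^{(k)}$ for $j\in J$.

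It now suffices to prove the following Key Lemma: there exists $\kappa=\kappa(I_0,\varphi,n)>0$ such that for every nonzero decomposable $\vv v\in\bigwedge^i\R^{n+1}$ with $1\le i\le n$,
$$\max_{\substack{|I|=i\\ \{1,2\}\subset I\text{ or }I=\{1\}}}\sup_{x\in I_0}|(F(x)\vv v)_I|\ge\kappa\|\vv v\|.$$
Since $\|\vv v\|\ge 1$ for every nonzero $\vv v\in\bigwedge^i\Z^{n+1}$ (integrality of Pl\"ucker coordinates), taking $\rho_{*}:=\kappa$ then precludes Case~(2). By compactness of the set of unit-norm decomposable multi-vectors and continuity in $\vv v$, the Key Lemma reduces to the non-vanishing statement: for every nonzero decomposable $\vv v\in\bigwedge^i\R^{n+1}$ there exists an admissible index $I$ with $(F(x)\vv v)_I\not\equiv 0$ on $I_0$.

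The main obstacle is this non-vanishing statement for $i\ge 3$. For $i=1$ it is immediate since $(F(x)\vv v)_{\{1\}}=v_0+v_1x+\sum_{j\ge 2}v_j\varphi_j(x)$ and the functions $1,x,\varphi_2,\dots,\varphi_n$ are linearly independent on $I_0$ by non-degeneracy of $\varphi$; for $i=2$ the coordinate $(F(x)\vv v)_{\{1,2\}}$ equals the Wronskian $W(w_0^{(1)},w_0^{(2)})(x)$ of two linearly independent analytic functions and hence is not identically zero. For $i\ge 3$ the difficulty is genuine: Laplace expansion of the minor along its last $i-2$ rows writes $(F(x)\vv v)_I$ as a linear combination of Wronskians $W(w_0^{(k_1)},w_0^{(k_2)})(x)$ with constant coefficients which are lower-rank Pl\"ucker coordinates of $V$, and these Wronskians can satisfy non-trivial linear relations (as happens for the Veronese curve). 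The plan is to argue by contradiction and induction on $i$: assuming all admissible coordinates $(F(x)\vv v)_I$ vanish identically on $I_0$, one combines the linear independence of $1,x,\varphi_2,\dots,\varphi_n,\varphi_2',\dots,\varphi_n'$ (inherited from non-degeneracy) with the Pl\"ucker relations on the Grassmannian of decomposables to deduce successively that the constant sub-minors of $V$ appearing in the Laplace expansion must vanish, and ultimately that $\vv v=0$.
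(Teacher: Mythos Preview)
Your overall strategy matches the paper's exactly: apply the Local Estimate with $J=I_0$ and a fixed $\rho_*>0$, and rule out Case~(2) by showing a uniform lower bound on $\sup_{x\in I_0}\|g_\tau z(x)u(\varphi(x))\vv v\|$ via coordinates indexed by $I\supset\{1,2\}$ (or $I=\{1\}$), whose $\tau$-weight $e^{\tau_I}$ is $\ge 1$ under \eqref{tau_cond}. The reductions for $i=1$ (nondegeneracy) and $i=2$ (Wronskian of two independent analytic functions) are correct.

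The gap is in your treatment of $i\ge 3$. The claimed ``linear independence of $1,x,\varphi_2,\dots,\varphi_n,\varphi_2',\dots,\varphi_n'$ (inherited from non-degeneracy)'' is simply false: for the Veronese curve $\varphi_j(x)=x^j$ one has $\varphi_j'=j\varphi_{j-1}$, so the derivatives lie in the span of the $\varphi_k$. You already noticed that Wronskians can satisfy nontrivial relations for this curve, so the proposed induction via Laplace expansion plus Pl\"ucker relations is not only unwritten but rests on an incorrect premise. This is a genuine missing idea, not just a technicality.

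The paper circumvents this entirely by a Hodge-duality trick that collapses the $i\ge 2$ case back to a \emph{single} $2\times 2$ Wronskian. Given a primitive collection $\vv v_1,\dots,\vv v_i$, pick the dual primitive collection $\vv u_1,\dots,\vv u_{n+1-i}\in\Z^{n+1}$ orthogonal to the $\vv v_j$'s with matching norms (Lemma~\ref{hodge}). Using the Laplace identity one checks that for $I=\{1,2,i_3,\dots,i_i\}$,
\[
\big|(\HH(x)\vv v_1\wedge\cdots\wedge \HH(x)\vv v_i)_I\big|
= e^{\tau_I}\,\big\|\tilde\varphi(x)\wedge\tilde\varphi'(x)\wedge\vv e_{i_3}\wedge\cdots\wedge\vv e_{i_i}\wedge\vv u_1\wedge\cdots\wedge\vv u_{n+1-i}\big\|.
\]
One can always choose $i_3,\dots,i_i\ge 3$ so that $\vv e_{i_3}\wedge\cdots\wedge\vv e_{i_i}\wedge\vv u_1\wedge\cdots\wedge\vv u_{n+1-i}\neq\vv 0$; taking its Hodge dual $\vv a\wedge\vv b\in\bigwedge^2(\Z^{n+1})\setminus\{\vv0\}$ and applying duality once more yields
\[
\big|(\HH(x)\vv v_1\wedge\cdots\wedge \HH(x)\vv v_i)_I\big|
= e^{\tau_I}\,\big|(\tilde\varphi(x)\wedge\tilde\varphi'(x))\cdot(\vv a\wedge\vv b)\big|,
\]
the Wronskian of the two linearly independent analytic functions $\tilde\varphi\cdot\vv a$ and $\tilde\varphi\cdot\vv b$. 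A compactness argument over $\{\|\vv a\wedge\vv b\|=1\}$ then gives the uniform $\rho_*$. This avoids any need for induction on $i$ or Pl\"ucker relations.
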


The proof of these theorems is independent from the rest of this paper and is therefore deferred till \S\ref{sec-QnD} in order to maintain the flow of the proof of the main theorem.

\subsection{The construction of a suitable Cantor set}\label{sec2.4}

First fix a large enough integer $R$ and two auxiliary parameters $\beta, \beta' >1$ defined by
\begin{equation}\label{betas}
e^{(1+r_1)\beta} = R\qquad\text{and}\qquad e^{(1+1/n)\beta'} = R.
\end{equation}
Note that since $r_1\ge 1/n$, we have that $\beta\le \beta'$. As
in \S\ref{sec-cantor-like-construction}, $R$ will be
the splitting rate of our construction of a Cantor set.
The parameters $\beta$ and $\beta'$ will determine
the `speed of travel' through $a(t)$- and $b(t)$-orbits in relation
to the Cantor set construction -- see \eqref{pr2}. Next, fix any
\begin{equation}\label{eq:eps}
0 < \epsilon \leq \frac{r_n}{3n}\,.
\end{equation}
%and
%\begin{equation}\label{eta}
%\eta = \frac{r_1-r_{k+1}}{1000n}\,.
%\end{equation}
%Note that if $r_1=\dots=r_n=\frac1n$, then $k=n$, $r_{k+1}=0$ and $\eta = \frac{r_1}{1000n}$. Also note that, by \eqref{wcond} and \eqref{r_n}, $\eta>0$ and $\epsilon>0$.
% Now choose a large integer $m >1$ and define
% \begin{equation}\label{kappa}
% \kappa = R^{-\epsilon m}.
% \end{equation}

As in \S\ref{sec-cantor-like-construction}, to begin the construction of a Cantor set,
we let $\cJ_0 =\{I_0\}$, where $I_0\subset U$ is as in \S\ref{prelim}
and is sufficiently small so that Theorems~\ref{thm:QnD-fractal-local} and \ref{thm:QnD-fractal-global} are applicable. In particular, \eqref{phi}, \eqref{lengthI_0}, \eqref{I_0} and \eqref{Ahlfors2} are satisfied.
Subsequently, for $q=0,1,\dots$ in order to define $\cJ_{q+1}$ we are required to specify the collections $\hat{\cJ}_{q}$ of the removed intervals at each step -- see \eqref{S2}.
In view of \eqref{union}, this will be accomplished if we specify the
components $\hat{\cJ}_{p,q}$ of $\hat{\cJ}_{q}$. With this in mind,
let us assume that $q \ge 0$ and $\cJ_0,\dots,\cJ_q$ have been constructed.

First, for $p=q$ define
\begin{equation}\label{Jqq}
\hat{\cJ}_{q,q}=\left\{\Iq \in \cI_{q+1} ~:~ \mu(\Iq)<(3C)^{-1}|\Iq|^\alpha\right\}.
\end{equation}
%Next, to define $\hat{\cJ}_{p,q} $ for $0\le p< q$
%,  we will fix a large integer $m >0$ and
%distinguish the following three cases depending on the
%relative size of $q$ and $m$.
%size of $q$:

%\noindent\textbf{Case 1\,:}\,\,$q < \eta^{-1} \mo$. In this case we define $\hat{\cJ}_{p,q} = \emptyset $ for all $0\le p< q$.

%\noindent\textbf{Case 2\,:}\,\,$\eta^{-1} \mo \le q \le 2 \eta^{-1} \mo $. In this case we define $\hat{\cJ}_{p,q} = \emptyset $ for all $0< p< q$.
Next, for any $q\geq1$ define $\hat{\cJ}_{p,q} = \emptyset$ if $0<p\le q/2$ or $0<p<q$ with $p \not\equiv q \pmod4$, define $\hat{\cJ}_{0,q}$ to be the collection of $\Iq \in \cI_{q+1}\setminus \hat{\cJ}_{q,q}$ such that there exists $l\in\Z$ with
%$\m \le l \le \eta q$
$\max(1,\etaqt) \le l \le \etaq$
satisfying
\begin{equation}\label{pr2}
\lat{b\left(\beta' l\right)  a(\beta(q+1)) z(x) u(\varphi(x))} \not\in K_{e^{-\epsilon\beta l}}\quad\text{for some $x \in \Iq$}\,,
\end{equation}
%\noindent\textbf{Case 3\,:}\,\,$q > 2\eta^{-1} \mo$.
and finally if $q/2 < p < q$ and $p=q-4l$ for some $l\in\Z$ define
%In this case we define $\hat{\cJ}_{p,q} = \emptyset$ for
%all $p$ such that $ 1 \le  p \le q - 2\eta q $,
%or $ q-4m < p < q$,
%or $p>0$ with $p \not\equiv q \pmod4$. Next, for $p =0$, we define
%$$
%\hat{\cJ}_{0,q}:=\left\{\Iq \in \cI_{q+1}\setminus \hat{\cJ}_{q,q}~:~ \text{\eqref{pr2} holds for some $l\in\Z$ with $q/4 \le l \le q/2$}\right\}\,
%$$
%Finally we define $\hat{\cJ}_{p,q}$ when $p = q - 4l$ for some $l\geq 1$ such that $q - 2\eta q < p$.%\le q-4m$.
%To this end define $l_0$ to be the largest integer such that $l < \eta q/2$. Since $q > 2\eta^{-1} \mo$, we have that $l_0 \ge \m$.
%Then, for $l=l_0,l_0-1,\ldots,\m$, define
$$
\textstyle \hat{\cJ}_{p,q}:=\left\{\Iq \in \cI_{q+1}\setminus \left(\hat{\cJ}_{q,q}\cup\bigcup_{0\le p'<p}\hat{\cJ}_{p',q}\right)
\,:\,\text{\eqref{pr2} holds}\right\}\,.
$$
Note that in the latter case $1 \leq l < \etaqt$.
%\par For $p = q-2m$, let us define $\hat{\cJ}_{p,q}$ to be the collection of $I_{q+1} \in \Par_R(\cJ_q)$ such that $I_{q+1} \not\in \hat{\cJ}_{q,q}$, $I_{q+1} \not\in \hat{\cJ}_{p', q}$ for any $0 \le p' < p$ and there exists $x \in I_{q+1}$ such that
% $$ z(x) a(\beta(q+1)) u(\varphi(x))] \not\in K_{\kappa}. $$
This completes the definition of the sets $\hat{\cJ}_{p,q}$ for all possible choices of $0\leq p\leq q$. As in \S\ref{sec-cantor-like-construction}, $\cK_{\infty}$ will be given by \eqref{Kinfty}.

\begin{prop}\label{prop2.3}
With reference to the above construction, we have that
\begin{equation}\label{LB}
  \mu(\Jq)\ge (3C)^{-1}|\Jq|^\alpha
\end{equation}
for all $\Jq\in\cJ_{q+1}$ and $q\ge0$, and
\begin{equation}\label{conclusion1}
\cK_{\infty} \subset \supp\mu\qquad\text{and}\qquad \cK_{\infty} \subset \varphi^{-1}(\Bad(\vv r))\,.
\end{equation}
\end{prop}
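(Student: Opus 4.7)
The plan is to verify the three claims by directly unpacking the construction of \S\ref{sec2.4}. Property \eqref{LB} is immediate from the definition \eqref{Jqq} of $\hat{\cJ}_{q,q}$: any interval of $\cI_{q+1}$ failing the mass bound is excised at stage $q+1$. For $\cK_\infty\subset\supp\mu$, given $x\in\cK_\infty$ and each $q\ge 0$, pick an interval $J_q\in\cJ_{q+1}$ containing $x$; by \eqref{LB}, $\mu(J_q)>0$, so $J_q$ meets $\supp\mu$ at some point $y_q$, with $|x-y_q|\le|J_q|=R^{-q-1}|I_0|\to 0$ by \eqref{lengthIq+1}. Closedness of $\supp\mu$ then gives $x\in\supp\mu$.

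The substantive claim is $\cK_\infty\subset\varphi^{-1}(\Bad(\vv r))$. By Lemma~\ref{prop:dani-corrspondence}, this reduces to showing that $\{\lat{a(t)u(\varphi(x))}:t>0\}$ lies in some $K_{\kappa'}$. The key observation is that for every integer $q\ge 4$ the value $l=1$ is covered by the removal rule: via $\hat{\cJ}_{0,q}$ when $q\le 8$, and via $\hat{\cJ}_{q-4,q}$ when $q\ge 9$ (in the latter case $p=q-4$ lies in $(q/2,q)$ and $l=(q-p)/4=1$). Hence for the interval $J_q\in\cJ_{q+1}$ containing $x$, \eqref{pr2} must fail at $l=1$, yielding
$$
\lat{b(\beta')\,a(\beta(q+1))\,z(x)\,u(\varphi(x))}\in K_{e^{-\epsilon\beta}}\qquad\text{for all }q\ge 4.
$$
To upgrade this, suppose some nonzero $\vv v\in\Z^{n+1}$ produced $\vv w:=a(\beta(q+1))z(x)u(\varphi(x))\vv v$ with $\|\vv w\|<\kappa$. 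Since $b(\beta')=\diag(e^{-\beta'/n},e^{\beta'},e^{-\beta'/n},\dots,e^{-\beta'/n})$ acts coordinatewise, $\|b(\beta')\vv w\|^2\le(ne^{-2\beta'/n}+e^{2\beta'})\kappa^2$, which is strictly less than $e^{-2\epsilon\beta}$ once $\kappa$ is chosen sufficiently small in terms of $n,\beta,\beta',\epsilon$; this contradicts the display above. Hence $\lat{a(\beta(q+1))z(x)u(\varphi(x))}\in K_\kappa$ for a fixed $\kappa>0$. By \eqref{eq:conj-u1-a} and the boundedness of $\psi$ on $I_0$, the conjugate $a(T)z(x)a(-T)$ is a unipotent matrix whose off-diagonal entries are uniformly bounded in $T>0$ and $x\in I_0$; multiplying by its inverse costs only a constant factor, giving $\lat{a(\beta(q+1))u(\varphi(x))}\in K_{\kappa/C_U}$ for all $q\ge 4$ and a uniform $C_U>0$.

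To cover all $t>0$, write any $t\ge 5\beta$ as $t=\beta(q+1)-s$ with $q\ge 4$ and $s\in[0,\beta)$; since $a(-s)=\diag(e^{-s},e^{r_1 s},\dots,e^{r_n s})$ has minimum singular value $e^{-s}\ge e^{-\beta}$, the short-vector bound is preserved up to this uniform factor. For $t\in(0,5\beta]$ any nonzero $\vv v\in\Z^{n+1}$ satisfies $\|u(\varphi(x))\vv v\|\ge 1$ (one of the last $n$ coordinates of $u(\varphi(x))\vv v$ equals a nonzero integer, or else only the first coordinate is nonzero and equals a nonzero integer), while $a(t)$ has minimum singular value $e^{-r_1 t}\ge e^{-5r_1\beta}$, which directly yields a uniform lower bound on the shortest vector. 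Combining these we obtain $\lat{a(t)u(\varphi(x))}\in K_{\kappa'}$ for all $t>0$ with a single $\kappa'>0$, so Lemma~\ref{prop:dani-corrspondence} delivers $\varphi(x)\in\Bad(\vv r)$. The main delicacy is precisely the upgrading from the $b$-modified lattice bound to the unmodified orbit; the stronger conditions for $l>1$ play no role in this proposition and are presumably reserved for proving $\cK_\infty\neq\emptyset$ in subsequent sections.
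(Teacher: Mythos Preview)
Your proof is correct and follows essentially the same route as the paper's. For \eqref{LB} and the inclusion $\cK_\infty\subset\supp\mu$ the arguments coincide. For $\cK_\infty\subset\varphi^{-1}(\Bad(\vv r))$ both proofs exploit the same key fact: any surviving interval in $\cJ_{q+1}$ avoids removal at $l=1$, so \eqref{pr2} fails at $l=1$, forcing the $b(\beta')$-twisted lattice into $K_{e^{-\epsilon\beta}}$. The paper packages the passage from this to boundedness of $\{\lat{a(\beta(q+1))u(\varphi(x))}\}$ in one step, writing $a(\beta(q+1))u(\varphi(x))\in u_1(O(1))^{-1}b(-\beta')K_{e^{-\epsilon\beta}}$ via the conjugation \eqref{eq:conj-u1-a}; you instead first strip off $b(\beta')$ by a direct short-vector estimate and then strip off $z(x)$ by the same conjugation. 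These are equivalent manoeuvres. Your explicit treatment of the interpolation to all $t>0$ and of small $t$ is a welcome expansion of what the paper leaves implicit in the phrase ``hence the orbit $\{\lat{a(t)u(\varphi(x))}:t>0\}$ is bounded''.
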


\begin{proof}
Regarding \eqref{LB}, this follows from \eqref{S2}, \eqref{union} and \eqref{Jqq}.
Regarding \eqref{conclusion1}, the first inclusion is a consequence of \eqref{LB}, which implies that $\Jqo\cap\supp\mu\neq\emptyset$ for all intervals $\Jqo$ in the construction of the Cantor set $\cK_\infty$.
Next, note that by \eqref{wcond}, \eqref{eq:z(x)} and \eqref{eq:conj-u1-a},
we have that $a(\beta(q+1)) z(x) = u_1(O(1)) a(\beta(q+1))$. So,
\begin{equation}\label{vb498}
b\left(\beta' l\right) a(\beta(q+1)) z(x) u(\varphi(x)) = b\left(\beta' l\right) u_1(O(1)) a(\beta(q+1)) u(\varphi(x)).
\end{equation}
Now, by definition, for all $q \ge \etainvt$, for all $\Iq\in\cJ_{q+1}$ and all $l$ such that $\m \le l \le \etaqt$ condition \eqref{pr2} does not hold. Therefore, using \eqref{vb498} with $l=\m$ we obtain that for any
$q\ge \etainvt\mo$, $\Iq\in\cJ_{q+1}$ and any $x\in \Iq$ we have that
\begin{equation}\label{vb409}
a(\beta(q+1)) u(\varphi(x))\in u_1(O(1))^{-1} b\left(-\beta'\mo\right) K_{e^{-\epsilon\beta \mo}}\,.
\end{equation}
In particular, \eqref{vb409} holds for any $x\in \cK_\infty$ and all $q\ge \etainvt\mo$.
It is readily seen that the right hand side of \eqref{vb409} is contained in a bounded subset of $X_{n+1}$ independent of $q$ and $x$. Therefore, $\left\{ \lat{a(\beta q)u(\varphi(x))} : q\in\N\right\}$ is bounded in $X_{n+1}$, and hence the orbit
$\left\{ \lat{a(t)u(\varphi(x))} : t >0 \right\}$ is bounded in $X_{n+1}$.
By Lemma~\ref{prop:dani-corrspondence}, this shows that $\varphi(x)\in\Bad(\vv r)$. Since this is true for any $x\in \cK_\infty$, the right hand side of \eqref{conclusion1} follows.
\end{proof}

In view of \eqref{conclusion1}, our main task will be to prove that $\cK_{\infty}\neq\emptyset$. This will be done by making use of Theorem~\ref{thm:generalized-cantor-set-estimate}. Naturally, to accomplish this goal we will need to estimate the
{\em removal rates} $\h_{p,q}$. We end this section by doing this when $p=q$.

\begin{prop}\label{Prop_p=q}
Suppose that $R^{\alpha}\ge 21 C^2$.
Then for every $q\ge0$ we have that \eqref{hpqbound} holds for $p=q$ with
\begin{equation}\label{hqq}
\h_{q,q} \le R - (4C)^{-2}R^{\alpha}\,.
\end{equation}
\end{prop}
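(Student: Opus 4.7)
The plan is to use a pigeonhole argument based on the two-sided Ahlfors regularity inequality \eqref{Ahlfors2}. Fix $\Jq\in\cJ_q$, label the $R$ equal subintervals of $\Jq$ in $\cI_{q+1}$ as $\Iq^{(1)},\dots,\Iq^{(R)}$ in their natural order (so each has common length $|\Iq|=R^{-1}|\Jq|$), and call $\Iq^{(k)}$ \emph{good} if $\mu(\Iq^{(k)})\ge (3C)^{-1}|\Iq|^\alpha$. By \eqref{Jqq} the subintervals in $\hat\cJ_{q,q}$ contained in $\Jq$ are exactly the non-good ones, so \eqref{hqq} reduces to showing that the number $M$ of good subintervals satisfies $M\ge (4C)^{-2}R^\alpha$.

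The first step is to count the number $N$ of indices $k$ with $\Iq^{(k)}\cap\supp\mu\neq\emptyset$. The mass bound $\mu(\Jq)\ge (3C)^{-1}|\Jq|^\alpha$ holds for every $q\ge 0$: directly from \eqref{LB} at the previous level when $q\ge 1$, and from \eqref{I_0} together with $\alpha\le 1$ (so $2^{-\alpha}\ge 1/2$) when $q=0$. Combined with the upper bound $\mu(\Iq^{(k)})\le C|\Iq|^\alpha$ from \eqref{Ahlfors2} for the intersecting $k$'s, summation over the partition of $\Jq$ yields $N\ge R^\alpha/(3C^2)$. The heart of the argument then invokes the other half of \eqref{Ahlfors2}: whenever $\Iq^{(k)}\cap\supp\mu\neq\emptyset$ one has $\mu(3\Iq^{(k)})\ge C^{-1}|\Iq|^\alpha$, and for any \emph{interior} $k$ (that is, $2\le k\le R-1$) the set $3\Iq^{(k)}$ is exactly $\Iq^{(k-1)}\cup\Iq^{(k)}\cup\Iq^{(k+1)}\subset\Jq$, so pigeonhole forces at least one of these three consecutive subintervals to be good.

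Since each good subinterval can play the role of this pigeonhole witness for at most three interior $k$'s (itself and its two neighbors), and at most two of the indices in $\{1,\dots,R\}$ are boundary ones, the bookkeeping gives
\[
M\ge \frac{N-2}{3}\ge \frac{R^\alpha}{9C^2}-\frac{2}{3}.
\]
The hypothesis $R^\alpha\ge 21C^2$ is calibrated precisely to absorb the boundary loss $2/3$: one checks that $R^\alpha/(9C^2)-R^\alpha/(16C^2)=7R^\alpha/(144C^2)\ge 7\cdot 21/144>2/3$, whence $M\ge R^\alpha/(16C^2)=(4C)^{-2}R^\alpha$ and $\h_{q,q}=R-M$ satisfies \eqref{hqq}. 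The main conceptual hurdle, I expect, is recognising that a naive global mass balance between good and bad subintervals is insufficient when $\alpha<1$ (it only yields $M\gtrsim (R^\alpha-R)/C^2$, which is negative for large $R$); the point of the proof is to invoke the \emph{ball-based} lower bound from \eqref{Ahlfors2} locally, which is what makes the pigeonhole over consecutive triples effective.
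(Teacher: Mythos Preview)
Your proof is correct and follows essentially the same approach as the paper's: both establish the lower bound $\mu(\Jq)\ge(3C)^{-1}|\Jq|^\alpha$, use the upper Ahlfors bound to get $N\ge R^\alpha/(3C^2)$ subintervals meeting $\supp\mu$, and then apply the lower bound $\mu(3\Iq)\ge C^{-1}|\Iq|^\alpha$ on interior triples to conclude at least $(N-2)/3\ge(4C)^{-2}R^\alpha$ of them are good. Your explicit invocation of $\alpha\le1$ for the $q=0$ base case is a detail the paper leaves implicit.
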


\begin{proof}
Let $q\ge0$ and take any $\Jq\in\cJ_q$. Then, $\mu(\Jq)\ge (3C)^{-1}|\Jq|^\alpha$. This follows from \eqref{I_0} when $q=0$ and from Proposition~\ref{prop2.3} when $q>0$. Hence, using the left hand side of \eqref{Ahlfors2} and the fact that $|\Iq|=R^{-1}|\Jq|$ for every $I\in\Par_R(\Jq)$ gives
\begin{equation}\label{vb123}
(3C)^{-1}|\Jq|^\alpha\le\mu(\Jq)~=\!\! \sum_{\substack{\Iq\in\Par_R(\Jq)\\[0.2ex]\Iq\cap\supp\mu\neq\emptyset}}\mu(\Iq)~\stackrel{\eqref{Ahlfors2}}{\le}~ N_{\Jq}\times C \left(R^{-1}|\Jq|\right)^{\alpha}\,,
\end{equation}
where $N_{\Jq}$ is the number of summands above. Therefore, \eqref{vb123} implies that $N_{\Jq}\ge \tfrac13C^{-2}R^{\alpha}$.
If $\Iq\in\Par_R(\Jq)$ and $\Iq\cap\supp\mu\neq\emptyset$ and is not the leftmost or rightmost interval of $\Par_R\left(\Jq\right)$, let $\Iq^-\in \Par_R(\Jq)$ and $\Iq^+\in \Par_R(\Jq)$ be the adjacent intervals to the left and to the right of $\Iq$. Thus, $3\Iq=\Iq^-\cup \Iq\cup \Iq^+\subset \Jq$. By the right hand side of \eqref{Ahlfors2}, at least one of these 3 intervals satisfies \eqref{LB}, in particular, the one of the three intervals $\Iq^-$, $\Iq$, $\Iq^+$ that has the largest $\mu$-measure will satisfy \eqref{LB}. Hence, at least
$$
\left(N_{\Jq}-2\right)/3\ge \left(\tfrac13C^{-2}R^{\alpha}-2\right)/3>(3C)^{-2}R^{\alpha} - 1\ge (4C)^{-2}R^{\alpha}
$$
intervals $\Iq\in\Par_R(\Jq)$ will satisfy \eqref{LB}. This immediately implies \eqref{hpqbound} for $p=q$ with $\h_{q,q}$ given by \eqref{hqq}.
\end{proof}

\section{Estimates for removal rates}\label{the_proof}

Within this section we estimate the removal rates $\h_{p,q}$ for $p<q$.
We separately consider the case $p=0$ (Proposition~\ref{prop:estimate-extremely-dangerous})
and $p>0$ (Proposition~\ref{prop:estimate-dangerous}).
Before we proceed we shall prove two auxiliary statements.

\begin{lemma}\label{extension2}
There exists $R_0$ such that for all $R\ge R_0$, $q\geq0$, $0\le l,l'\le q/2$,
$\xzero,\justx \in I_0$ such that $\justx=\xzero+\theta R^{-q-1+l'}$ for some $|\theta|\leq |I_0|$ and any $\vv v=\vv v_1\we\cdots\we\vv v_i\in\bigwedge^i\left(\R^{n+1}\right)\setminus\{\vv0\}$ we have that
\begin{equation}\label{vb590A}
\frac12\le\frac{\left\|u\left(\theta R^{l'-l} \vv e_1\right) H(\xzero)\vv v\right\|}
{\|H(\justx)\vv v\|}\le 2\,,
\end{equation}
where
\begin{equation}\label{H}
H(x)=H_{l,q}(x):=b\left(\beta' l\right)a(\beta(q+1))z(x)u(\varphi(x))\,.
\end{equation}
\end{lemma}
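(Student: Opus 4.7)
My plan is to express
\[
H(x) \;=\; A \cdot u(\theta R^{l'-l}\vv e_1)\, H(\xzero),
\]
for some matrix $A$ that approaches the identity as $R\to\infty$; the double inequality \eqref{vb590A} will then follow at once by choosing $R_0$ large enough so that $\|A^{\pm1}\| \le 2$ acting on $\bigwedge^{i}(\R^{n+1})$. Setting $h := x - \xzero = \theta R^{-q-1+l'}$, analyticity of $\varphi$ on $3^{n+1}I_0$ yields the Taylor expansion $\varphi(x) - \varphi(\xzero) = h\varphi'(\xzero) + \vv r$ with $|\vv r| = O(h^2)$, while $y := \psi(x) - \psi(\xzero) = O(h)$, both uniformly in $\xzero \in I_0$.

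To isolate the ``first-order'' part, I combine three algebraic observations. Because $u(\cdot)$ is a one-parameter abelian subgroup with $u(\vv v) = I + N(\vv v)$ and $N(\vv v)^2 = 0$, the identity \eqref{conj} upgrades to $z(\xzero) u(h\varphi'(\xzero)) z(\xzero)^{-1} = u(h\vv e_1)$; also $z(x) z(\xzero)^{-1} = u_1(y)$ because $u_1$ is a homomorphism; and a short block computation gives $z(\xzero) u(\vv r) z(\xzero)^{-1} = u(\vv r')$ for some $\vv r' = O(|\vv r|)$. Combined,
\[
z(x)\, u\bigl(\varphi(x)-\varphi(\xzero)\bigr)\, z(\xzero)^{-1} = u_1(y)\, u(h\vv e_1)\, u(\vv r').
\]
Setting $g := b(\beta' l) a(\beta(q+1))$, the calibration \eqref{betas} together with \eqref{eq:conj-u-a}--\eqref{eq:conj-u-b} gives exactly $g u(h\vv e_1) g^{-1} = u(\theta R^{l'-l}\vv e_1)$. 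Since matrices of the form $u(\cdot)$ mutually commute, I can pull this factor to the right of $g u(\vv r') g^{-1}$, obtaining the desired factorization with $A := [g u_1(y) g^{-1}]\cdot [g u(\vv r') g^{-1}]$.

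It remains to show that each factor of $A$ differs from $I$ by a nilpotent matrix of operator norm $O(R^{-1})$. Using \eqref{eq:conj-u1-a}--\eqref{eq:conj-u1-b}, I find $g u_1(y) g^{-1} = u_1(\vv w)$ with $|w_i| \le R^l e^{(r_i-r_1)\beta(q+1)}|y_i|$, and the exponent arithmetic (using $r_i \le r_1$ and $l, l' \le q/2$) collapses this to $O(R^{l+l'-q-1}) = O(R^{-1})$. Similarly, \eqref{eq:conj-u-a}--\eqref{eq:conj-u-b} give $g u(\vv r') g^{-1} = u(\vv s)$ with $|s_1| \le R^{q+1-l}|\vv r'|$ and $|s_i| \le R^{q+1}|\vv r'|$ for $i\ge 2$; since $|\vv r'| = O(R^{-2(q+1-l')})$, every entry is again $O(R^{-1})$. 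Hence $\|A\|_{op},\|A^{-1}\|_{op} \le 1 + O(R^{-1})$ on $\R^{n+1}$, and so $\le (1+O(R^{-1}))^{n+1} \le 2$ on $\bigwedge^i(\R^{n+1})$ for $R \ge R_0$ large enough, establishing \eqref{vb590A}.

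The principal obstacle is really just careful bookkeeping of the exponents under the several conjugations. The conceptual payoff is that the single identity $g u(h\vv e_1) g^{-1} = u(\theta R^{l'-l}\vv e_1)$ extracts precisely the first-order Taylor contribution, matching the $u(\theta R^{l'-l}\vv e_1)$ that appears in \eqref{vb590A}; the higher-order remainder $\vv r$ and the shift $y$ in $z$ then both conjugate by $g$ into $O(R^{-1})$-perturbations of the identity, but \emph{only} under the hypotheses $l, l' \le q/2$ and the calibration \eqref{betas} of $\beta, \beta'$ relative to $R$.
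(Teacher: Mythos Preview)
Your proof is correct and follows essentially the same approach as the paper: both arguments use Taylor expansion together with the conjugation identities \eqref{eq:conj-u-a}--\eqref{conj} to arrive at the factorization $H(x)=A\,u(\theta R^{l'-l}\vv e_1)H(x_0)$ with $A=u_1(O(R^{l+l'-q-1}))\,u(O(R^{2l'-q-1}))$, and then observe that the hypothesis $0\le l,l'\le q/2$ forces $A$ to be an $O(R^{-1})$-perturbation of the identity. Your presentation is slightly more explicit in isolating the exact first-order term $u(h\vv e_1)$ before conjugating by $g$, but the computation and the final error matrix $A$ coincide with the paper's $O_e(R^{-1})$.
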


\begin{proof}
Using Taylor's expansion and the well-known and easily verified equations
$$
  u(\vv x+\tilde{\vv x})=u(\vv x)u(\tilde{\vv x})\qquad\text{and}\qquad
  u_1(\vv y+\tilde{\vv y})=u_1(\vv y)u_1(\tilde{\vv y})
$$
valid for all $\vv x,\tilde{\vv x}\in\R^n$ and all $\vv y,\tilde{\vv y}\in\R^{n-1}$, we obtain that
\begin{align}
  u(\varphi(\justx)) &= u\left(\varphi(\xzero) + \theta R^{-q-1+l'} \varphi'(\xzero) + O\left(R^{-2q-2+2l'}\right)\right) \nonumber \\
                 &= u\left( \theta R^{-q-1+l'} \varphi'(\xzero) + O\left(R^{-2q-2+2l'}\right)\right) u(\varphi(\xzero))\,, \label{eq:ly8A}
\end{align}
where $O\left(R^{-2q-2+2l'}\right)=O_{\R^n}\left(R^{-2q-2+2l'}\right)$ -- see \S\ref{subsec-notation} for notation. Similarly,
\begin{align}
  z(\justx) &\stackrel{\eqref{eq:psi}}{=} u_1(\psi(\justx)) = u_1\left(\psi(\xzero) + O\left(R^{-q-1+l'}\right)\right) \nonumber\\
        &= u_1\left(O\left(R^{-q-1+l'}\right)\right) u_1(\psi(\xzero)) \stackrel{\eqref{eq:psi}}{=} u_1\left(O\left(R^{-q-1+l'}\right)\right) z(\xzero)\,, \label{eq:ly9A}
\end{align}
where $O\left(R^{-q-1+l'}\right)=O_{\R^{n-1}}\left(R^{-q-1+l'}\right)$.
Using \eqref{eq:ly9A} together with \eqref{wcond}, \eqref{eq:conj-u1-a}, \eqref{eq:conj-u1-b} and \eqref{betas}, we find that
\begin{align}
        b\left(\beta' l\right)  a(\beta(q+1)) &z(\justx)
        \stackrel{\eqref{eq:ly9A}}{=} b\left(\beta' l\right) a(\beta(q+1)) u_1\left(O\left(R^{-q-1+l'}\right)\right) z(\xzero)   \nonumber\\
&\stackrel{\eqref{wcond}\&\eqref{eq:conj-u1-a}}{=} b\left(\beta' l\right)  u_1\left(O\left(R^{-q-1+l'}\right)\right) a(\beta(q+1)) z(\xzero)  \nonumber\\
        &\stackrel{\eqref{eq:conj-u1-b}\&\eqref{betas}}{=} u_1\left(O\left(R^{-q-1+l'+l}\right)\right) b\left(\beta' l\right) a(\beta(q+1)) z(\xzero)\,. \label{vb968A}
\end{align}
Further, using \eqref{eq:ly8A} together with \eqref{eq:conj-u-a}, \eqref{eq:conj-u-b} and \eqref{conj} we find that
\begin{align}
        &b\left(\beta' l\right) a(\beta(q+1)) z(\xzero) u(\varphi(\justx)) \nonumber\\
&\stackrel{\eqref{eq:ly8A}}{=}  b\left(\beta' l\right)  a(\beta(q+1)) z(\xzero) u\left(\theta R^{-q-1+l'} \varphi'(\xzero) +O\left(R^{-2q-2+2l'}\right)\right) u(\varphi(\xzero)) \nonumber\\
        &\stackrel{\eqref{conj}}{=}  b\left(\beta' l\right) a(\beta(q+1)) u\left(\theta R^{-q-1+l'} \vv e_1 +O\left(R^{-2q-2+2l'}\right)\right)  z(\xzero) u(\varphi(\xzero))\nonumber \\
        &\stackrel{\eqref{eq:conj-u-a}}{=} b\left(\beta' l\right)  u\left( \theta R^{l'}\vv e_1 + O\left(R^{-q-1+2l'}\right)\right) a(\beta(q+1)) z(\xzero) u(\varphi(\xzero))\nonumber \\
        &\stackrel{\eqref{eq:conj-u-b}\&\eqref{betas}}{=} u\left(O\left(R^{-q-1+2l'}\right)\right) u\left(\theta R^{l'-l}  \vv e_1 \right) b\left(\beta' l\right) a(\beta(q+1)) z(\xzero) u(\varphi(\xzero))\,. \label{vb386A}
\end{align}
Combining this with \eqref{vb968A} gives that
\begin{align}
        H(\justx)&\hspace*{1ex}=\hspace*{1ex}b\left(\beta' l\right)  a(\beta(q+1)) z(\justx)  u(\varphi(\justx))\nonumber\\
        &\stackrel{\eqref{vb968A}}{=} u_1\left(O\left(R^{-q-1+l'+l}\right)\right) b\left(\beta' l\right) a(\beta(q+1)) z(\xzero)u(\varphi(\justx)) \nonumber\\
        &\stackrel{\eqref{vb386A}}{=}O_e\left(R^{-1}\right) u\left(\theta R^{l'-l}  \vv e_1 \right) b\left(\beta' l\right) a(\beta(q+1)) z(\xzero) u(\varphi(\xzero))\nonumber\\
        &\hspace*{1ex}=\hspace*{1ex}O_e\left(R^{-1}\right) u\left(\theta R^{l'-l}  \vv e_1 \right) H(\xzero)\,,\label{vb789A}
\end{align}
where
\begin{equation*}%\label{vb037A}
O_e\left(R^{-1}\right)=u_1\left(O\left(R^{-q-1+l'+l}\right)\right) u\left(O\left(R^{-q-1+2l'}\right)\right)\,.
\end{equation*}
Since $0\le l,l'\le q/2$, the matrix $O_e\left(R^{-1}\right)$
gets within $\SL{n+1}{\R}$ arbitrarily close to the identity matrix if $R\ge R_0$ and $R_0$ is sufficiently large. In this case \eqref{vb789A} implies \eqref{vb590A}, thus completing the proof.
\end{proof}

\begin{lemma}\label{extension}
There exists $R_0'$ such that for all $R\ge R_0'$, $q\geq0$, $\m\le l\le \etaq$ and any interval $\Iq\subset I_0$ of length $|I_0|R^{-q-1}$ for which there exists $\xzero \in \Iq$ satisfying
\begin{equation}\label{vb298}
\lat{H(\xzero)} \not\in K_{e^{-\epsilon\beta l}}
\end{equation}
we have that
\begin{equation}\label{vb590}
\lat{H(\justx)} \not\in K_{3 e^{-\epsilon \beta l}}\quad\text{for all $\justx\in \Iq$}\,.
\end{equation}
\end{lemma}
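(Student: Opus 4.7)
The plan is to deduce Lemma~\ref{extension} as a short corollary of Lemma~\ref{extension2} combined with the definition of $K_\delta$, specialising to the case $l'=0$ and $i=1$. To set this up, observe that since both $\xzero$ and $\justx$ lie in the interval $\Iq$ of length $|I_0|R^{-q-1}$, we can write $\justx = \xzero + \theta R^{-q-1}$ with $|\theta|\le |I_0|$; this is exactly the parametrisation in Lemma~\ref{extension2} with $l'=0$. The range conditions $0\le l'\le q/2$ and $0\le l\le q/2$ hold automatically since $l'=0$ and $1 \le l \le q/4 \le q/2$. We will also require $R_0'\ge R_0$ so that Lemma~\ref{extension2} itself is applicable.

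Next, unpack the hypothesis: by the definition of $K_{e^{-\epsilon\beta l}}$, the assumption $\lat{H(\xzero)}\not\in K_{e^{-\epsilon\beta l}}$ means that there exists a nonzero $\vv w\in\Z^{n+1}$ with $\|H(\xzero)\vv w\| < e^{-\epsilon\beta l}$. Applying \eqref{vb590A} with $i=1$, $l'=0$, and $\vv v=\vv w$ gives
$$\|H(\justx)\vv w\|\;\le\; 2\,\|u(\theta R^{-l}\vv e_1)\,H(\xzero)\vv w\|.$$
Since $H(\justx)$ is invertible and $\vv w\neq 0$, the vector $H(\justx)\vv w$ is a nonzero element of the lattice $\lat{H(\justx)}$, so by the definition of $K_\delta$ it will suffice to bound the right-hand side above by $3e^{-\epsilon\beta l}$.

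The final step is to control the perturbation matrix $u(\theta R^{-l}\vv e_1)$. Since it differs from the identity only by adding $\theta R^{-l}$ times the second coordinate to the first, one has $\|u(\theta R^{-l}\vv e_1)\vv y\|\le (1+|\theta R^{-l}|)\|\vv y\|$ for every $\vv y\in\R^{n+1}$. Because $|\theta|\le|I_0|$ and $l\ge1$, the factor $1+|\theta R^{-l}|$ is bounded by $1+|I_0|R^{-1}$. Taking $R_0'\ge \max(R_0,\,4|I_0|)$ forces $2(1+|I_0|R^{-1})\le 5/2 < 3$ for every $R\ge R_0'$, and therefore
$$\|H(\justx)\vv w\| \;<\; 3\,e^{-\epsilon\beta l},$$
which is exactly \eqref{vb590}. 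There is no serious obstacle here; the only subtlety is that the constant $2$ in \eqref{vb590A} leaves little room, so $R_0'$ must be chosen slightly larger than the $R_0$ of Lemma~\ref{extension2} in order to absorb the perturbation factor $1+|\theta R^{-l}|$ and still stay below the constant $3$ on the right-hand side of \eqref{vb590}.
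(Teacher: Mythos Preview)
Your proof is correct and follows essentially the same approach as the paper: both arguments apply Lemma~\ref{extension2} with $l'=0$ to obtain $\|H(\justx)\vv w\|\le 2\|u(\theta R^{-l}\vv e_1)H(\xzero)\vv w\|$, then bound the operator norm of the perturbation $u(\theta R^{-l}\vv e_1)$ by a constant close to $1$ (the paper uses $3/2$, you use $5/4$ via the explicit choice $R_0'\ge 4|I_0|$) to land strictly below $3e^{-\epsilon\beta l}$.
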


\begin{proof}
By \eqref{vb298}, there exists $\vv v\in\Z^{n+1}\setminus\{\vv0\}$ such that
\begin{equation}\label{vb875}
\|H(\xzero)\vv v\|<e^{-\epsilon\beta l}\,.
\end{equation}
Since $l\geq \m$ there exists $R_0'$ such that for any $R\geq R_0'$ the operator norm of $u\left(-\theta R^{-l} \vv e_1\right)$ is less than $3/2$ for any $\theta$ satisfying $|\theta|\leq|I_0|$. Hence, for any such $\theta$, by \eqref{vb875}, we get that
\begin{equation}\label{vb9878}
\left\|u\left(\theta R^{-l} \vv e_1\right) H(\xzero)\vv v\right\|< \tfrac32e^{-\epsilon \beta l}\,.
\end{equation}
Take any point $\justx\in \Iq$. Then, $\justx=\xzero+\theta R^{-q-1}$ for some $\theta$ with $|\theta|\le |I_0|$. Let $R_0'\ge R_0$, where $R_0$ arises from Lemma~\ref{extension2}. Note that Lemma~\ref{extension2} is applicable with $l'=0$, in which case $u\left(\theta R^{l'-l} \vv e_1\right)=u\left(\theta R^{-l} \vv e_1\right)$. Then, the left hand side of \eqref{vb590A} together with \eqref{vb9878} imply \eqref{vb590}, as required.
\end{proof}

\begin{prop}\label{prop:estimate-extremely-dangerous}
There exist constants $R_1\geq1$, $C_1 >0$ and $\eta_1>0$ such that if $R\geq R_1$ then, with reference to the Cantor set defined in \S\ref{sec2.4}, for any $q >0$ we have
that \eqref{hpqbound} holds for $p=0$ with
\begin{equation}\label{est1}
\h_{0,q}\le C_1 R^{\alpha(1-\eta_1) (q+1) }\,.
\end{equation}
\end{prop}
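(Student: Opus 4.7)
The plan is to fix $q \ge 1$, partition $\hat\cJ_{0,q}$ according to the value of $l$ in \eqref{pr2} that causes the removal, and bound each piece via the global non-divergence estimate (Theorem~\ref{thm:QnD-fractal-global}). For each integer $l$ with $\max(1,q/8)\le l \le q/4$, let $\hat\cJ_{0,q,l}$ denote the set of $\Iq \in \hat\cJ_{0,q}$ for which \eqref{pr2} holds with this specific $l$, so that $\hat\cJ_{0,q}=\bigcup_l \hat\cJ_{0,q,l}$. By Lemma~\ref{extension} (applied with $\xzero\in\Iq$ witnessing \eqref{pr2}), for every $x\in \Iq$ one has $\lat{H(x)}\notin K_{3e^{-\epsilon\beta l}}$, where $H(x)=b(\beta'l)a(\beta(q+1))z(x)u(\varphi(x))$. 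Hence $\Iq\subset W_l:=W(\tau^{(l)},I_0,3e^{-\epsilon\beta l})$, where $g_{\tau^{(l)}}=b(\beta'l)a(\beta(q+1))$.

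Next I would verify that $\tau^{(l)}$ satisfies the hypotheses of Theorem~\ref{thm:QnD-fractal-global}. Explicitly,
$$
\tau^{(l)}_1=\beta(q+1)-\tfrac{\beta'l}{n},\quad \tau^{(l)}_2=\beta'l-r_1\beta(q+1),\quad \tau^{(l)}_i=-\tfrac{\beta'l}{n}-r_{i-1}\beta(q+1)\ (i\ge 3),
$$
so \eqref{tau} holds and $\tau^{(l)}_i<0$ for $i\ge 3$. For $\tau^{(l)}_1>0$, the defining relation $e^{\beta(1+r_1)}=R=e^{\beta'(1+1/n)}$ gives $\beta'/\beta=n(1+r_1)/(n+1)\le 2$ (since $r_1\le 1$), so $\beta'l/n\le 2\beta l/(n+1)\cdot(n+1)/n \cdot\ldots$ — after simplification one sees $\beta(q+1)>\beta'l/n$ is equivalent to $l(1+r_1)/(n+1)<q+1$, which holds since $l\le q/4$ and $(1+r_1)/(n+1)\le 1$. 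Therefore Theorem~\ref{thm:QnD-fractal-global} yields
$$
\mu(W_l)\le M_2(3e^{-\epsilon\beta l})^\gamma = 3^\gamma M_2\,e^{-\epsilon\beta\gamma l}.
$$

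Now I convert this measure bound into a counting bound. By construction, no interval in $\hat\cJ_{0,q,l}$ lies in $\hat\cJ_{q,q}$, so the defining condition \eqref{Jqq} gives $\mu(\Iq)\ge (3C)^{-1}|\Iq|^\alpha=(3C)^{-1}R^{-\alpha(q+1)}|I_0|^\alpha$ for every $\Iq\in\hat\cJ_{0,q,l}$. Since the intervals of $\cI_{q+1}$ overlap only at endpoints (which are $\mu$-null) and all of them lie in $W_l$, summing yields
$$
\#\hat\cJ_{0,q,l}\cdot(3C)^{-1}R^{-\alpha(q+1)}|I_0|^\alpha\;\le\;\mu(W_l)\;\le\;3^\gamma M_2 e^{-\epsilon\beta\gamma l}.
$$
Summing over $l\ge\max(1,q/8)$ and bounding the resulting geometric series by its leading term, then using $e^\beta=R^{1/(1+r_1)}$, gives
$$
\#\hat\cJ_{0,q}\le C'\,R^{\alpha(q+1)}\,e^{-\epsilon\beta\gamma q/8}=C'\,R^{\alpha(q+1)-\epsilon\gamma q/(8(1+r_1))}.
$$
Choosing $\eta_1 = \epsilon\gamma/(16\alpha(1+r_1))$ ensures $\alpha\eta_1(q+1)\le \epsilon\gamma q/(8(1+r_1))$ for all $q\ge 1$ (the factor $16$ absorbs the worst case $q/(q+1)=1/2$), and absorbing a finite number of small-$q$ contributions (including those $q\in\{1,2,3\}$ where the range of $l$ is empty and $\hat\cJ_{0,q}=\emptyset$) into the constant yields \eqref{est1} for a suitable $C_1$.

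The only genuine subtlety is step two: recognising that, because we are at the top level $p=0$, the appropriate tool is the \emph{global} quantitative non-divergence estimate rather than its local variant. This precisely avoids having to rule out the Case~(2) algebraic obstruction at the scale of $I_0$ — the sign conditions \eqref{tau_cond} encode exactly the regime (enforced by the upper bound $l\le q/4$ through the compatibility $\beta'/\beta\le 2$) in which the algebraic obstructions cannot arise. The remaining bookkeeping is elementary.
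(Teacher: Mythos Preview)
Your proof is correct and follows essentially the same route as the paper: partition $\hat\cJ_{0,q}$ by the value of $l$, use Lemma~\ref{extension} to pass from one point to the whole interval, apply the global non-divergence estimate (Theorem~\ref{thm:QnD-fractal-global}) after checking the sign conditions on $\tau^{(l)}$, convert the measure bound into a count via the lower bound $\mu(I)\ge(3C)^{-1}|I|^\alpha$ guaranteed by $I\notin\hat\cJ_{q,q}$, and sum the resulting geometric series. Your verification of $\tau_1^{(l)}>0$ is written a bit hastily but the final equivalence $l(1+r_1)/(n+1)<q+1$ and its justification are correct; you are also slightly more careful than the paper in choosing $\eta_1$ with an extra factor of $2$ to absorb the $q/(q+1)$ discrepancy.
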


\begin{proof}
Recall from \S\ref{sec2.4} that we only need to verify \eqref{est1} for $q\ge \etainv \mo$ since otherwise $\hat{\cJ}_{0,q} = \emptyset$. Let $ l_{\min} :=\max (\m, \etaqt) \le l \le \etaq$, and $N_l$ be the number of
intervals $\Iq \in \cI_{q+1}=\Par_R\left(\cJ_q\right)$ such that $\Iq \not\in \hat{\cJ}_{q,q}$ and \eqref{pr2} holds. Let $\Iq$ be any of these intervals. Since $\Iq \not\in \hat{\cJ}_{q,q}$, by \eqref{lengthIq+1} and \eqref{Jqq}, we have that
\begin{equation}\label{LB2}
\mu(\Iq)\ge (3C)^{-1}|\Iq|^\alpha=(3C)^{-1}|I_0|^\alpha R^{-\alpha(q+1)}\,.
\end{equation}
Furthermore, by \eqref{pr2} and Lemma~\ref{extension}, we have that \eqref{vb590} holds for any $\justx\in \Iq$.
Let $\tau = (\tau_1, \tau_2, \dots, \tau_{n+1})$ be such that $g_{\vv \tau}:=b\left(\beta' l\right) a(\beta(q+1))$ with $g_\tau$ given by \eqref{gtau}. It is readily seen that conditions \eqref{tau} and \eqref{tau_cond} are satisfied. Then, by Theorem~\ref{thm:QnD-fractal-global}, we have that
\begin{equation}\label{vb3:14}
\mu \left(\left\{ \justx \in I_0 : \lat{H(\justx)} \not\in K_{3 e^{ - \epsilon\beta l}} \right\} \right) \le M_2 3^{\gamma} e^{-\gamma\epsilon\beta l}\,.
\end{equation}
On the other hand, by \eqref{LB2} and Lemma~\ref{extension}, this measure is greater than or equal to $N_l\times (3C)^{-1}|I_0|^\alpha R^{-\alpha(q+1)}$. Hence, using \eqref{betas} gives
\begin{equation}\label{bound1}
%N_l\le M_2 3^{\gamma} R^{-\gamma\epsilon\beta l} \times 3C|I_0|^{-\alpha} R^{\alpha(q+1)}~=~
%3CM_2 3^{\gamma} |I_0|^{-\alpha} R^{\alpha(q+1)-\gamma\epsilon\beta l}\,.
N_l\le M_2 3^{\gamma} e^{ - \gamma\epsilon\beta l} \times 3C|I_0|^{-\alpha} R^{\alpha(q+1)}~=~
3CM_2 3^{\gamma} |I_0|^{-\alpha} R^{\alpha(q+1) - \frac{\gamma\epsilon}{1+r_1} l}\,.
\end{equation}
Let
%$\eta'_1 := \gamma \epsilon \beta$.
$\eta'_1 := \frac{\gamma\epsilon}{1+r_1}$.
Summing up \eqref{bound1} over $l=l_{\min}, \ldots,  \etaq$ gives
\begin{align}
\sharp \hat{\cJ}_{0,q} &\le 3CM_2 3^{\gamma} |I_0|^{-\alpha}  \sum_{l=l_{\min}}^{\etaq}  R^{\alpha (q+1) - \eta'_1 l} \nonumber\\
& = 3CM_2 3^{\gamma} |I_0|^{-\alpha}  R^{\alpha (q+1) - \eta'_1 l_{\min}} \sum_{l=l_{\min}}^{\etaq} R^{-\eta'_1 (l-l_{\min})}\,.\label{vb598}
\end{align}
If $R$ is sufficiently large then
$$
\sum_{l=l_{\min}}^{\etaq} R^{-\eta'_1 (l-l_{\min})} \le \sum_{i=0}^{\infty} R^{-\eta'_1 i} =\frac{1}{1-R^{-\eta'_1}}\le 2\,.
$$
Also recall that $l_{\min} \ge \etaqt$. Therefore, by \eqref{vb598}, we get the desired upper bound with $\eta_1=\tfrac{\eta_1'}{8\alpha}$ and $C_1=6CM_2 3^{\gamma} |I_0|^{-\alpha}$. This completes the proof of Proposition~\ref{prop:estimate-extremely-dangerous}.
\end{proof}

\begin{prop}\label{prop:estimate-dangerous}
There exist constants $R_2\geq1$, $C_2 >0$ and $\eta_2>0$ such that if $R\geq R_2$ then, with reference to the Cantor set defined in \S\ref{sec2.4}, for any $q >0$ and $0<p<q$ we have that \eqref{hpqbound} holds with
$$
\h_{p,q}\le C_2 R^{\alpha(1-\eta_2) (q+1-p)}\,.
$$
\end{prop}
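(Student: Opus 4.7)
The plan is to apply the local quantitative non-divergence estimate (Theorem~\ref{thm:QnD-fractal-local}) on $J = \Jp$ with $g_\tau := b(\beta' l)\,a(\beta(q+1))$, where $l := (q-p)/4$. Since $\hat{\cJ}_{p,q}$ is non-empty only when $q/2 < p < q$ and $p \equiv q \pmod 4$, $l$ is a positive integer with $l < q/8$; a direct check verifies that $\tau$ satisfies both \eqref{tau} and \eqref{tau_cond}. Set $\delta := e^{-\epsilon\beta l}$ and choose a cutoff $\rho \in (\delta, 1]$ to be calibrated below.

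In Case~(1) of Theorem~\ref{thm:QnD-fractal-local} one has $\mu(W(\tau, \Jp, \delta)) \le M_1 (\delta/\rho)^\gamma \mu(3\Jp)$. Every $\Iq \in \hat{\cJ}_{p,q}$ with $\Iq \subset \Jp$ escapes $\hat{\cJ}_{q,q}$, so $\mu(\Iq) \ge (3C)^{-1}|\Iq|^\alpha$, while by Lemma~\ref{extension} the entire $\Iq$ lies inside $W(\tau, \Jp, 3\delta)$. Combining these with $\mu(3\Jp) \le C'|\Jp|^\alpha$, $|\Jp|/|\Iq| = R^{q+1-p}$, and $\delta = R^{-\epsilon(q-p)/(4(1+r_1))}$, one obtains
\[
\#\bigl\{\Iq \in \hat{\cJ}_{p,q} : \Iq \subset \Jp\bigr\} \le C''\,\rho^{-\gamma}\,R^{\alpha(q+1-p)\,-\,\gamma\epsilon(q-p)/(4(1+r_1))}.
\]
Choosing $\rho = R^{-\zeta(q-p)}$ with a sufficiently small $\zeta \in \bigl(0,\,\epsilon/(4(1+r_1))\bigr)$ then yields $\h_{p,q} \le C_2 R^{\alpha(1-\eta_2)(q+1-p)}$ for a suitable $\eta_2>0$, provided Case~(2) is excluded.

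The crux is to rule out Case~(2) at this $\rho$. If it held, there would exist $1 \le i \le n$ and a primitive multivector $\vv v = \vv v_1 \wedge \cdots \wedge \vv v_i \in \bigwedge^i(\Z^{n+1}) \setminus \{\vv 0\}$ with $\sup_{x \in \Jp} \|g_\tau z(x) u(\varphi(x)) \vv v\| < \rho$. I plan to contradict $\Jp \in \cJ_p$ by producing an earlier step $q' < p$ and an $l'$ in an admissible range of the construction---either $l' \in [\max(1, q'/8), q'/4]$ (fitting into the $p' = 0$ bucket), or $l' = (q'-p')/4$ with $q'/2 < p' < q'$ and $p' \equiv q' \pmod 4$---such that $\lat{b(\beta' l')\,a(\beta(q'+1))\,z(x)u(\varphi(x))}$ exits $K_{e^{-\epsilon\beta l'}}$ for some $x \in \Jp$. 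Because $\Jp$ sits inside its $q'$-level ancestor $I' \in \cI_{q'+1}$, this forces $I' \in \hat{\cJ}_{p', q'}$ at step $q'$, contradicting $\Jp \in \cJ_p$.

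To produce such $(q', l')$, I would exploit the conjugation identity
\[
b(\beta' l')\,a(\beta(q'+1)) \;=\; b\bigl(\beta'(l'-l)\bigr)\,a\bigl(\beta(q'-q)\bigr)\cdot g_\tau,
\]
and analyse its coordinate action on $\bigwedge^i(\R^{n+1})$, which splits into two qualitatively different regimes according to whether the index set contains the special index $2$ singled out by $b(t)$. A rank-reduction argument---for instance, applying Minkowski's theorem to the rational sublattice $\Z^{n+1} \cap \Span_\R(\vv v_1, \dots, \vv v_i)$ to extract a non-zero integer vector with controlled norm---then converts the small rank-$i$ multivector at $(q, l)$ into a short integer vector at the chosen $(q', l')$. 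The combinatorial freedom along shifts $(q', l') = (q - 4k, l - k)$ for integer $k$, together with the parity $p \equiv q \pmod 4$ and the $l'$-range available in the $p' = 0$ bucket, supplies enough room to ensure that for every $(i, \vv v)$ arising from Case~(2) an admissible $(q', l')$ can be found. The hardest step is precisely this rank reduction, uniformly over $i \in \{1, \ldots, n\}$: it is where the second diagonal flow $b(t)$ pays off, supplying the extra scaling parameter needed to decouple the $\bigwedge^i$-components from the behaviour of $a(t)$ alone, so that the inherited integer vector satisfies the $e^{-\epsilon\beta l'}$ threshold imposed by \eqref{pr2} at the earlier step.
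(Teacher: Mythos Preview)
Your Case~(1) setup is essentially correct, though the paper simply takes $\rho=1$; your smaller $\rho=R^{-\zeta(q-p)}$ is harmless there but buys nothing for Case~(2). The genuine gap is in your plan for ruling out Case~(2).

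You aim to contradict $J\in\cJ_p$ by passing to a step $q'<p$ and showing an ancestor of $J$ should already have been removed. But your only rank-reduction device is Minkowski, which from the small rank-$i$ multivector produces a single integer vector of norm $\lesssim\rho^{1/i}$ with \emph{no} control on its individual coordinates. Acting by $b(\beta'(l'-l))\,a(\beta(q'-q))$ with $q'-q<-(q-p)=-4l$ then forces the $a$-factor to expand the coordinates indexed $2,\dots,n+1$ by at least $e^{r_n\beta(q-q')}>R^{4lr_n/(1+r_1)}$, which dominates the saving $\rho^{1/i}=R^{-4\zeta l/i}$ since $\zeta<\epsilon/(4(1+r_1))\le r_n/(12n(1+r_1))$. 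No choice of $l'$ repairs this: shrinking one coordinate block via $b$ makes the other grow. Note also that your shifts $(q',l')=(q-4k,l-k)$ preserve $q'-4l'=p$, so they never reach $q'<p$ while keeping $l'\ge1$; your stated target is inconsistent with the very family of shifts you propose.

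What is missing is the \emph{unipotent} variation across $J$, not merely the diagonal one. The paper assumes $N_l\ge1$, picks $x_0\in I$ for some $I\in\hat\cJ_{p,q}$ inside $J$, and uses Lemma~\ref{extension2} to convert the supremum bound over $J$ into
\[
\|u(\Theta\vv e_1)\,H(x_0)\vv v\|\le 2\qquad\text{for all }\Theta\in[0,L],\quad L\asymp R^{3l+1}.
\]
This large $L$ is the decisive input to Lemma~\ref{lm:yang-lm5.8}, which yields a structural dichotomy: either {\rm(A)} a short integer vector whose $\vv e_2$-coordinate is $\lesssim L^{-1/2}$, or {\rm(B)} a decomposition $\vv e_1\wedge\vv w^{(i-1)}+\vv w^{(i)}$ with $\|\vv w^{(i)}\|\lesssim L^{-1/2}$. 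In Case~{\rm(A)} one applies an \emph{additional} $b(\beta' l)$, so the contradiction is at the \emph{same} step $q$ with $l'=2l$ (bucket $p'=q-8l<p$ or $p'=0$), the small $\vv e_2$-coordinate controlling the only expanding direction of $b$. In Case~{\rm(B)} one applies $a(-\beta l)$, so the contradiction is at step $q'=q-l$ (still $>p$) with the same $l$; the special structure of the multivector tames the expansion of $a^{-1}$ before a final Minkowski step. In neither case does one descend below level $p$: the contradiction is always against $I$ (or its ancestor at a level strictly between $p$ and $q+1$) surviving, not against $J\in\cJ_p$.
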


The proof will make use of the following generalisation of Lemma~5.8 in \cite{Yang2019}.

\begin{lemma}\label{lm:yang-lm5.8}
Let $W$ be the subspace of\/ $\R^{n+1}$ spanned by the basis vectors
$\vv e_2, \dots, \vv e_{n+1}$, $\rho>0$, $L\ge\red{1}$, $1\le i\le n$ and $\vv a_1, \dots, \vv a_i \in \R^{n+1}$ be such that
\begin{equation}\label{cond_lem}
  \|u(\Theta \vv e_1) (\vv a_1 \wedge \cdots \wedge \vv a_i)\| \le \rho^i
\end{equation}
for all $\Theta \in [0, L]$. Then at least one of the following two statements holds:\\[-1ex]
\begin{enumerate}
\item[{\rm \caseA{}}] there exists $\vv a =(a_{1}, a_{2}, \dots, a_{n+1})\in\Span_\Z(\vv a_1, \ldots,\vv a_i)$ such that $\|\vv a\|\le\rho$ and $|a_{2}| \le  \rho L^{-\frac{1}{2}};$

\medskip

\item[{\rm \caseB{}}] $i\ge2$ and there exist $\vv w^{(i-1)} \in \bigwedge\nolimits^{i-1} W$ and $\vv w^{(i)} \in \bigwedge^i W$
such that
\begin{equation}\label{vb640}
\vv a_1 \wedge \cdots \wedge \vv a_i = \vv e_1 \wedge \vv w^{(i-1)} + \vv w^{(i)}\,,
\end{equation}
\begin{equation}\label{vb641}
\|\vv w^{(i-1)}\| \le \rho^i\quad\text{and}\quad\|\vv w^{(i)}\| \le \red{4\sqrt n}\rho^{i} L^{-\frac{1}{2}}\,.
\end{equation}
\end{enumerate}
\end{lemma}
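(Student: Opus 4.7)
The plan is to expand the decomposable multivector $\vv v := \vv a_1 \wedge \cdots \wedge \vv a_i = \sum_I c_I \vv e_I$ in the standard Pl\"ucker basis of $\bigwedge^i\R^{n+1}$, translate the hypothesis into coordinate-wise bounds on the $c_I$, and split into two cases according to whether the component of $\vv v$ in $\bigwedge^i\Span(\vv e_3,\dots,\vv e_{n+1})$ is small or large.

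Since $u(\Theta\vv e_1)$ fixes $\vv e_1$ and each $\vv e_j$ with $j\ge 3$ while sending $\vv e_2\mapsto\vv e_2+\Theta\vv e_1$, a direct computation shows $u(\Theta\vv e_1)\vv e_I=\vv e_I$ unless $2\in I$ and $1\notin I$, in which case $u(\Theta\vv e_1)\vv e_I=\vv e_I+\Theta\,\vv e_{(I\setminus\{2\})\cup\{1\}}$. Hence the $\vv e_I$-coefficient of $u(\Theta\vv e_1)\vv v$ equals $c_I$ except when $1\in I$ and $2\notin I$, where it equals the affine function $c_I+\Theta\,c_{(I\setminus\{1\})\cup\{2\}}$ of $\Theta$. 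Evaluating $\|u(\Theta\vv e_1)\vv v\|\le\rho^i$ at $\Theta=0$ yields $\|\vv v\|\le\rho^i$ and so $|c_I|\le\rho^i$ for every $I$; evaluating at $\Theta=L$ then yields the sharper bound $|c_{I'}|\le 2\rho^i/L$ whenever $2\in I'$ and $1\notin I'$.

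Next, decompose $\vv v=\vv e_1\wedge\vv w^{(i-1)}+\vv w^{(i)}$ as in the statement and split $\vv w^{(i)}=\vv w^{(i)}_a+\vv w^{(i)}_b$ into the parts whose indices do or do not contain $2$. The bound $\|\vv v\|\le\rho^i$ gives $\|\vv w^{(i-1)}\|\le\rho^i$ directly, and the sharper coordinate bound gives $\|\vv w^{(i)}_a\|\le O(\rho^i/L)$. The decisive quantity is $\|\vv w^{(i)}_b\|^2=\sum_{I\subset\{3,\dots,n+1\},\,|I|=i}c_I^2$, whose individual summands are controlled only by $\rho^i$. The main dichotomy is: if $\|\vv w^{(i)}_b\|\le 4\sqrt{n}\,\rho^iL^{-1/2}$ (up to minor constant adjustments), then together with the bound on $\vv w^{(i)}_a$ we obtain \caseB{}; otherwise there exists $I_0\subset\{3,\dots,n+1\}$ with $|I_0|=i$ and $|c_{I_0}|$ of order $\rho^iL^{-1/2}$, and we produce the integer vector required by \caseA{}.

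For the \caseA{} production, the $i\times i$ submatrix $A_{I_0}$ of $A=(\vv a_1\mid\cdots\mid\vv a_i)$ is invertible with $|\det A_{I_0}|=|c_{I_0}|$, and via Cramer's rule the entries of $AA_{I_0}^{-1}$ are of the form $\pm c_J/c_{I_0}$, where $|c_J|\le\rho^i$ in general but $|c_J|\le 2\rho^i/L$ precisely for the $J$ of the form $\{2\}\cup(I_0\setminus\{i_k\})$---which are exactly the indices producing row $2$ of $AA_{I_0}^{-1}$. We then apply Minkowski's first theorem to the lattice $\Lambda=\Span_\Z(\vv a_1,\dots,\vv a_i)$ of rank $i$ in $V=\Span_\R(\vv a_1,\dots,\vv a_i)$ with respect to the convex symmetric body
\[
B\cap V,\qquad B:=\{\vv x\in\R^{n+1}:\|\vv x\|\le\rho,\ |x_2|\le\rho L^{-1/2}\}.
\]
The sharper Pl\"ucker bound along the $\vv e_2$-direction controls the angle between $V$ and $\vv e_2$ via $\|\vv e_2^\parallel\|^2=(\|\vv w^{(i-1)}_a\|^2+\|\vv w^{(i)}_a\|^2)/\|\vv v\|^2$, and the largeness of $\|\vv w^{(i)}_b\|$ ensures that $\|\vv v\|$ is sufficient for the volume inequality $\vol_V(B\cap V)\ge 2^i\|\vv v\|$ to hold, whence Minkowski yields the required $\vv a\in B\cap\Lambda$. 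The main obstacle lies in this final volume comparison: the threshold $4\sqrt{n}$ in \caseB{} and the complementary lower bound on $|c_{I_0}|$ must be chosen consistently so that the Minkowski inequality holds in the complementary regime; careful bookkeeping of the combinatorial factors $\binom{n}{k}$ arising from summing over index sets and of the Minkowski constant $2^i/v_i$ is the principal technical task.
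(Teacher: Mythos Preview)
Your coordinate analysis of $u(\Theta\vv e_1)$ acting on Pl\"ucker coordinates is correct, and your route to case~\caseB{} (bounding $\|\vv w^{(i)}_a\|\le 2\rho^i/L$ via $\Theta=0,L$, then combining with smallness of $\|\vv w^{(i)}_b\|$) is sound and matches the paper's computation.

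The gap is in your case~\caseA{} argument. You propose Minkowski's theorem for the body $B\cap V$ with $B=\{\|\vv x\|\le\rho,\ |x_2|\le\rho L^{-1/2}\}$, and assert that ``the largeness of $\|\vv w^{(i)}_b\|$ ensures that $\|\vv v\|$ is sufficient for the volume inequality $\vol_V(B\cap V)\ge 2^i\|\vv v\|$.'' This reasoning is backwards: a larger $\|\vv v\|=\det\Lambda$ makes the Minkowski condition \emph{harder}. What you actually need is an upper bound on $\|\vv e_2^\parallel\|$, and by your own formula this requires bounding $\|\vv w^{(i-1)}_a\|$---the coordinates $c_{\{1,2\}\cup K}$ with $K\subset\{3,\dots,n+1\}$. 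For these indices neither your sharp bound $2\rho^i/L$ nor the dichotomy hypothesis on $\|\vv w^{(i)}_b\|$ says anything; you only have the trivial $|c_J|\le\rho^i$. One can try to import the quadratic Pl\"ucker relations (decomposability of $\vv v$) to control $c_{\{1,2\}\cup K}$ in terms of $c_{\{2\}\cup\cdots}$ and $c_{\{1\}\cup\cdots}$, but already for $i=2$ an explicit check shows the resulting constants do not close the volume inequality. So this is not merely ``bookkeeping''; the thin-body Minkowski route is genuinely obstructed.

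The paper avoids this by reversing the dichotomy. It first applies Minkowski with the \emph{round} ball to produce a short vector $\vv a_1$ with $\|\vv a_1\|\le\sqrt i\,\rho$, and then splits on the scalar $|a_{1,2}|$. If $|a_{1,2}|\le\rho L^{-1/2}$, case~\caseA{} holds with $\vv a=\vv a_1$. If $|a_{1,2}|>\rho L^{-1/2}$, the paper exploits decomposability through the elementary identity $\tilde{\vv w}_1\wedge\tilde{\vv w}_2^{(i-1)}=a_{1,2}\,\tilde{\vv w}^{(i)}$ (here $\tilde{\vv w}^{(i)}=\vv w^{(i)}_b$ and $\|\tilde{\vv w}_2^{(i-1)}\|=\|\vv w^{(i)}_a\|$), which together with $\|\tilde{\vv w}_1\|\le\|\vv a_1\|\le\sqrt i\,\rho$ and the lower bound on $|a_{1,2}|$ yields $\|\vv w^{(i)}_b\|\le 2\sqrt i\,\rho^i L^{-1/2}$, hence case~\caseB{}. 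The point is that picking the short vector \emph{first} lets decomposability do the work via a one-line wedge identity, rather than forcing a delicate volume estimate for a thin convex body.
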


\begin{proof}
For convenience, let $\vv a^{(i)} = \vv a_1 \wedge \cdots \wedge \vv a_i$. By \eqref{cond_lem} with $\Theta=0$, we have that
\begin{equation}\label{cond_2}
\|\vv a^{(i)}\| \le \rho^i.
\end{equation}
We will assume that $\vv a^{(i)}\neq\vv0$ as otherwise \caseB{} holds with $\vv w^{(i-1)}=\vv0$ and $\vv w^{(i)}=\vv 0$. By \eqref{cond_2}, the sublattice $\Lambda=\Span_\Z(\vv a_1, \ldots,\vv a_i)$ of $\Span_\R(\vv a_1, \ldots,\vv a_i)$ has determinant $\le\rho^i$. By Minkowski's theorem for convex bodies, $\Lambda$ contains a non-zero vector of length $\le\red{\sqrt i}\rho$. Without loss of generality, we can assume that
\begin{equation}\label{a1}
\|\vv a_1\| \le \red{\sqrt i}\rho
\end{equation}
as otherwise we can replace $\vv a_1,\ldots,\vv a_i$ with any reduced basis of $\Lambda$.
Note that $\vv a^{(i)}$ does not depend on the choice of the basis of $\Lambda$.

First of all, let us deal with the case $i=1$. Let $\vv a=\vv a_1$. Then, by \eqref{a1}, $\|\vv a\|\le\rho$. Further,
computing the action on $\vv a$ by $u\left(\Theta\vv e_1\right)$ gives that
$u\left(\Theta\vv e_1\right)\vv a = \vv a + \Theta a_2 \vv e_1$.
Then, by \eqref{cond_lem}, we have that $\left\|\vv a + \Theta a_2 \vv e_1\right\| \le \rho^i=\rho$ for all $\Theta\in \left[0,L\right]$.
For $\Theta=L$ this implies that $\left|a_1+a_2L\right| \le \rho$. By the triangle inequality, $L\left|a_2\right| \le \rho+\left|a_1\right|\le \rho+\left\|\vv a\right\|\le 2\rho$ and we finally conclude that
$|a_2|\le 2\rho L^{-1}\le \rho L^{-\frac12}$. Thus \caseA{} always holds in the case $i=1$.

Now suppose that $i\ge2$. Let $\tilde{W}$ denote the subspace of $\R^{n+1}$ spanned by $\{\vv e_3, \dots, \vv e_{n+1}\}$.
For $j=1, \dots, i$, write $\vv a_j = a_{j,1} \vv e_1 + a_{j,2} \vv e_2 +  \tilde{\vv w}_j$,
where $\tilde{\vv w}_j \in \tilde W$. For the rest of the proof we will assume that $|a_{1,2}| > \rho L^{-\frac{1}{2}}$ as otherwise \caseA{} holds with $\vv a=\vv a_1$ and we are done. Next, using the basic properties of the wedge product we get that
\begin{align}
\vv a^{(i)} &= \left( a_{1,1} \vv e_1 + a_{1,2} \vv e_2 + \tilde{\vv w}_1\right) \wedge \cdots \wedge \left( a_{i,1} \vv e_1 + a_{i,2} \vv e_2 +  \tilde{\vv w}_i\right) \nonumber \\
&= \vv e_1 \wedge \vv e_2 \wedge \tilde{\vv w}^{(i-2)}  + \vv e_1 \wedge \tilde{\vv w}^{(i-1)}_1
+ \vv e_2 \wedge  \tilde{\vv w}^{(i-1)}_2 + \tilde{\vv w}^{(i)}\,, \label{eq:ly11}
\end{align}
where
\begin{align*}
\tilde{\vv w}^{(i-2)} &= \sum_{1\le j\neq\ell\le i}(-1)^{j+\ell+1}a_{j,1}a_{\ell,2} \bigwedge_{j' \neq j,\ell} \tilde{\vv w}_{j'} \in \bigwedge\nolimits^{i-2}\tilde{W}\,,\\[0ex]
\tilde{\vv w}^{(i-1)}_k &= \sum_{j=1}^i (-1)^{j+1} a_{j,k} \bigwedge_{j' \neq j} \tilde{\vv w}_{j'} \in \bigwedge\nolimits^{i-1}\tilde{W}\qquad\quad(k=1,2),\\[0ex]
\tilde{\vv w}^{(i)} &= \bigwedge_{j=1}^i \tilde{\vv w}_j \in \bigwedge\nolimits^i \tilde{W}\,.
\end{align*}
Let $\vv w^{(i-1)} = \tilde{\vv w}^{(i-1)}_1 + \vv e_2 \wedge \tilde{\vv w}^{(i-2)}$ and $\vv w^{(i)} = \vv e_2 \wedge \tilde{\vv w}_2^{(i-1)} + \tilde{\vv w}^{(i)}$. It is easily seen that $\vv w^{(i-1)}\in \bigwedge^{i-1}W$ and $\vv w^{(i)}\in \bigwedge^{i}W$. Further, by \eqref{eq:ly11}, \eqref{vb640} holds.
Since \eqref{vb11} is an orthonormal basis, the four multivectors in the sum \eqref{eq:ly11} are orthogonal.
Therefore, \eqref{cond_2} immediately implies the left hand side of \eqref{vb641}.

To prove the right hand side of \eqref{vb641}, first note that $u(\Theta \vv e_1)\vv e_1 = \vv e_1$, $u(\Theta \vv e_1)\vv e_2 = \vv e_2 + \Theta \vv e_1 $
and $u(\Theta\vv e_1)\tilde{\vv w} = \tilde{\vv w} $ for any $\tilde{\vv w} \in \tilde{W}$ and any $\Theta \in \R$. Then
\begin{equation}\label{vb296}
u(\Theta \vv e_1) \vv a^{(i)} = \vv a^{(i)} + \Theta \vv e_1 \wedge \tilde{\vv w}^{(i-1)}_2\,.
\end{equation}
Using \eqref{cond_lem} with $\Theta=L$, \eqref{cond_2} and \eqref{vb296} gives
\begin{equation}\label{vb7345}
L\|\vv e_1 \wedge \tilde{\vv w}^{(i-1)}_2\|\le2\rho^i\,.
\end{equation}
Observe that $\|\vv e_1 \wedge \tilde{\vv w}^{(i-1)}_2\|=\|\tilde{\vv w}^{(i-1)}_2\|=\left\|\vv e_2 \wedge \tilde{\vv w}^{(i-1)}_2\right\|$. This together with \eqref{vb7345} gives
\begin{equation}\label{eq:ly13}
\left\|\tilde{\vv w}^{(i-1)}_2\right\|=\left\|\vv e_2 \wedge \tilde{\vv w}^{(i-1)}_2\right\| \le 2\rho^i L^{-1}\,.
\end{equation}
Recall that
\begin{equation}\label{submult}
  \|\vv u\we\vv v\|\le \|\vv u\|\cdot\|\vv v\|\qquad\text{for all $\vv u\in\R^{n+1}$ and all $\vv v\in\bigwedge\nolimits^{i-1}\left(\R^{n+1}\right)$}.
\end{equation}
By \eqref{a1}, we have that $\left\|\tilde{\vv w}_1\right\|\le \|\vv a_1\| \le \red{\sqrt i}\rho$. Also, observe that
$\tilde{\vv w}_1\wedge \tilde{\vv w}^{(i-1)}_2 = a_{1,2} \tilde{\vv w}^{(i)}$. Then, using
\eqref{eq:ly13} and the earlier made assumption $|a_{1,2}|> \rho L^{-\frac{1}{2}}$, we get that
\begin{align}
\left\|\tilde{\vv w}^{(i)}  \right\|
&=\frac{1}{|a_{1,2}|} \left\|\tilde{\vv w}_1\wedge \tilde{\vv w}^{(i-1)}_2\right\| ~\stackrel{\eqref{submult}}{\le}~ \frac{1}{|a_{1,2}|} \left\|\tilde{\vv w}_1\right\| \left\| \tilde{\vv w}^{(i-1)}_2\right\|\nonumber\\
&\stackrel{\left\|\tilde{\vv w}_1\right\|\le \red{\sqrt i}\rho}{\le}~ \frac{\red{\sqrt i}\rho}{|a_{1,2}|}\left\| \tilde{\vv w}^{(i-1)}_2\right\|
~\stackrel{|a_{1,2}|>\rho L^{-\frac{1}{2}}}{\le}~
\red{\sqrt i}L^{\frac{1}{2}} \left\| \tilde{\vv w}^{(i-1)}_2\right\| \nonumber\\
&\stackrel{\eqref{eq:ly13}}{\le}
\red{\sqrt i}L^{\frac{1}{2}} \cdot 2  \rho^i L^{-1} = 2 \red{\sqrt i}\rho^i L^{-\frac{1}{2}}\,.\label{vb2876}
\end{align}
Then, using \eqref{eq:ly13}, \eqref{vb2876}, the definition of $\vv w^{(i)}$ and the condition $L\ge\red{1}$ we get that
$$
\|\vv w^{(i)}\| \le \|\vv e_2 \wedge \tilde{\vv w}_2^{(i-1)}\| + \|\tilde{\vv w}^{(i)}\|\le 2\rho^{i} L^{-1}
+2\red{\sqrt i}\rho^{i} L^{-\frac{1}{2}}\le \red{4\sqrt n}\rho^{i} L^{-\frac{1}{2}}\,.
$$
This verifies the right hand side of \eqref{vb641} and completes the proof of Lemma~\ref{lm:yang-lm5.8}.
\end{proof}

Now we are fully equipped to establish Proposition \ref{prop:estimate-dangerous}.

\begin{proof}[Proof of Proposition \ref{prop:estimate-dangerous}]
Recall from \S\ref{sec2.4} that we only need to prove it for $q > \etainvt \mo$ and $p = q-4l$, where $ \m \le l < \etaqt $
since otherwise $\hat{\cJ}_{p,q} = \emptyset$. With this in mind, fix any interval $\Ip \in \cJ_p$. Let $N_l$ denote
the number of intervals $\Iq \in \cI_{q+1}$ such that $\Iq \subset \Ip$, $\Iq \not\in \hat{\cJ}_{p', q}$ for $p' =q$ and any $0 \le p' < p$ and such that \eqref{pr2} holds. By \eqref{lengthIq+1} and \eqref{Jqq}, we have that \eqref{LB2} holds. In turn, by \eqref{pr2} and Lemma~\ref{extension}, we have that for any $\justx\in \Iq$ \eqref{vb590} holds for any of these intervals $\Iq$.
Suppose that for all $1 \le i \le n$ and $\vv v = \vv v_1 \wedge \cdots \wedge \vv v_i \in \bigwedge^i \left(\Z^{n+1}\right) \setminus \{ \vv 0\}$ we have that
\begin{equation}\label{vb972}
\max_{\justx \in \Ip} \|H(\justx) \vv v\|   \ge 1\,,
\end{equation}
where $H$ is given by \eqref{H}. Then, letting $\rho=1$, $g_{\vv \tau} = b\left(\beta' l\right)  a(\beta(q+1))$, by Theorem~\ref{thm:QnD-fractal-local} and \eqref{Ahlfors2}, we obtain that
\begin{equation}\label{vb3:28}
\mu \left(\left\{ \justx \in \Ip : \lat{H(\justx)} \not\in K_{3 e^{ - \epsilon\beta l}} \right\} \right)
\le CM_1 3^{\alpha+\gamma} e^{-\gamma\epsilon\beta l}|I_0|^\alpha R^{-p\alpha}\,.
\end{equation}
On the other hand, by \eqref{LB2} and Lemma~\ref{extension}, we have that this measure is $\ge N_l\times (3C)^{-1}|I_0|^\alpha R^{-\alpha(q+1)}$. Hence, using \eqref{betas} and $q-p=4l$ gives
\begin{align}
N_l&\le CM_1 3^{\alpha+\gamma} e^{ - \gamma\epsilon\beta l}|I_0|^\alpha R^{-p\alpha}\cdot 3C|I_0|^{-\alpha} R^{\alpha(q+1)}\nonumber\\
& =C^2M_13^{1+\alpha+\gamma} R^{\alpha(q+1-p) - \frac{\gamma\epsilon}{4(1+r_1)} (q-p)}\,.\label{vb834}
\end{align}
Note that since $p<q$ we have that $\frac12(q+1-p)\le (q-p)$. Then \eqref{vb834} implies the desired estimate with $\eta_2 = \frac{\gamma \epsilon}{8\alpha (1+r_1)}$ and $C_2=C^2M_1 3^{1+\alpha+\gamma}$. Thus, to complete the proof of this proposition, it is enough to demonstrate that \eqref{vb972} always takes place for the intervals $\Ip$ and the multi-vectors $\vv v$ in question. We will show this under the assumption that $N_l\ge1$ since, if $N_l=0$, then the required bound holds anyway.

Suppose, for a contradiction, that there exists $\vv v = \vv v_1 \wedge \cdots \wedge \vv v_i \in \bigwedge^i \left(\Z^{n+1}\right) \setminus \{ \vv 0\}$ with $1 \le i \le n$ such that
\begin{equation}\label{vb094}
\max_{\justx\in \Ip}  \|H(\justx) \vv v\|  < 1\,.
\end{equation}
Since $N_l\ge1$, there exists an interval of the Cantor construction $\Iq \subset \Ip$ such that $\Iq \in\hat{\cJ}_{p,q}$. Take any $\xzero\in \Iq$. Then the point $\justx=\xzero+\theta R^{-q-1+l'}$ belongs to $\Ip$ for either all $\theta\in\left[0,\tfrac12|I_0|\right]$ or all $\theta\in\left[-\tfrac12|I_0|,0\right]$ and $l'=4l+1$. Without loss of generality we will assume that this holds for any $\theta\in\left[0,\tfrac12|I_0|\right]$. Then, by \eqref{vb094} and Lemma~\ref{extension2}, namely the right hand side of \eqref{vb590A}, we get that
\begin{equation}\label{eq:ly6}
\textstyle\left\|u\left(\theta R^{3l+1} \vv e_1\right) H(\xzero)\vv v\right\|  \le 2\qquad\text{for all $\theta\in \left[0,\frac{1}{2}|I_0|\right]$}\,,
\end{equation}
when $R$ is sufficiently large. Let $\vv a_j =  H(\xzero) \vv v_j$ for $j=1,\dots,i$, $\rho=2^{\frac12}$ and $L=\frac12|I_0|R^{3l+1}$. Assuming that $R\ge |I_0|^{-1}$, we have that $L\ge1$. Then, in view of \eqref{eq:ly6}, Lemma~\ref{lm:yang-lm5.8} is applicable. Thus, by Lemma~\ref{lm:yang-lm5.8}, for the remainder of the proof we can assume the validity of either case \caseA{} or case \caseB{} of the lemma.

\underline{Case \caseA{}}. Let $\vv a$ be as in Case \caseA{} of Lemma~\ref{lm:yang-lm5.8}. Then
\begin{equation}\label{vva}
  \vv a=H(\xzero)\vv v\qquad\text{for some }\vv v\in\Z^{n+1}\setminus\{\vv0\}\,,
\end{equation}
\begin{equation}\label{vb2858}
|a_2|\le \rho L^{-\frac{1}{2}}\le2\left(2|I_0|^{-1}R^{-3l-1}\right)^{\frac12} \le R^{-3l/2}=e^{-\frac32(1+1/n)\beta'l}\,,
\end{equation}
provided that $R\geq8|I_0|^{-1}$. Furthermore, $\|\vv a\|\le\rho\le 2$.
Then, using \eqref{eq:b(t)}, \eqref{betas}, \eqref{vb2858} and $\|\vv a\|\le2$, we get that
\begin{equation}\label{vb8768}
\left\|b\left(\beta' l\right) \vv a\right\| \le e^{-\beta' l /n}\|\vv a\|+e^{\beta' l}|a_2|  \le 3 e^{-3\epsilon \beta l}
< e^{-2\epsilon \beta l}
\end{equation}
provided that $R\geq 3^{\frac{1+r_1}{\epsilon}}$.
In \eqref{vb8768} we used the facts that $\beta' l /n > 3 \epsilon \beta l$ and $e^\beta=R^{\frac{1}{1+r_1}}$ which follow from \eqref{betas} and \eqref{eq:eps}.
Then, by \eqref{vva} and \eqref{vb8768}, we get that
\begin{equation}\label{eq:ly7}
\lat{b\left(\beta'l\right)H(\xzero)}\stackrel{\eqref{H}}{=}\lat{b\left(\beta' (2l)\right) a(\beta(q+1)) z(\xzero) u(\varphi(\xzero))} \not\in K_{e^{-\epsilon \beta (2l)}}\,.
\end{equation}
Recall that $1\le l < \etaqt$. Then, by \eqref{eq:ly7}, we have that $\Iq\in\hat{\cJ}_{p', q}$ with $p'=q-8l<p$ if $l < \etaqf$ and with $p'=0$ if $l \ge \etaqf$. In view of the definition of $\hat{\cJ}_{p,q}$ (see \S\ref{sec2.4}), this leads to a contradiction. The proof in Case \caseA{} is thus completed.

%\smallskip

\underline{Case \caseB{}}.
In this case there exist $\vv w^{(i-1)} \in \bigwedge\nolimits^{i-1} W$ and $\vv w^{(i)} \in \bigwedge^i W$, where $W$ is the same as in Lemma~\ref{lm:yang-lm5.8}, satisfying \eqref{vb640}, $\|\vv w^{(i-1)}\| \le \rho^i=2$ and
\begin{equation}\label{vb641+}
\|\vv w^{(i)}\| \le \red{8\sqrt n} \cdot 2^{\frac{1}{2}}|I_0|^{-\frac{1}{2}}R^{-(3l+1)/2}\le \red{12\sqrt n\,}|I_0|^{-\frac{1}{2}}R^{-(3l+1)/2}\,.
\end{equation}
Next, using \eqref{weights}, \eqref{betas}, \eqref{vb641+} and basic properties of actions on linear maps on multivectors we calculate that
\[
\left\|a\left(-\beta l\right) \left(e_1 \wedge \vv w^{(i-1)}\right)\right\| \le e^{- r_n\beta l} \left\|e_1 \wedge \vv w^{(i-1)}\right\| = e^{- r_n\beta l} \left\|\vv w^{(i-1)}\right\| \leq  2R^{-\frac{r_n}{1+r_1}l}\,,
\]
\begin{align*}
\left\|a\left(-\beta l\right) \vv w^{(i)}\right\|  \le  e^{\beta l}\left\|\vv w^{(i)}\right\| &\leq \red{12\sqrt n\,}|I_0|^{-\frac12}R^{-\left(\frac{3}{2}l+\frac{1}{2}\right)} R^{\frac{l}{1+r_1}}\\
&=
\red{12\sqrt n\,}|I_0|^{-\frac12}R^{-\frac{1}{2}}R^{-\left(\frac{3}{2}-\frac{1}{1+r_1}\right)l}\,.
\end{align*}
Note that $0<r_1<1$. Then combining the above two estimates with \eqref{vb640} gives
$$
\left\|a\left(-\beta l\right) (\vv a_1 \wedge \cdots \wedge \vv a_i)\right\| \le \red{n^{-\frac{n}{2}}}R^{- \frac{r_n l}{2}}
$$
provided that $R$ is sufficiently large.
Therefore, by Minkowski's Theorem, there exists
$$
\vv c\in \Span_\Z\left( a\left(-\beta l\right) \vv a_1, \dots, a\left(-\beta l\right)\vv a_i\right)\setminus\{\vv0\}
$$
such that
$\|\vv c\| \le R^{- \frac{r_n l}{2i}}$. Therefore, by \eqref{betas} and \eqref{eq:eps}, we have that
\begin{equation}\label{vvc}
\|\vv c\| \le e^{-\epsilon \beta l}.
\end{equation}
In view of the definition of the vectors $\vv a_j$ and the choice of $\vv c$, we have that $\vv c=a(-\beta l)H(\xzero) \vv v$ for some $\vv v\in\Z^{n+1}\setminus\{\vv0\}$. This together with \eqref{H}, \eqref{vvc} and the trivial fact that $a\left(-\beta l\right)$ and $b(\beta' l)$ commute implies that
\begin{equation}\label{eq:ly7+}
\lat{b\left(\beta' l\right) a(\beta(q+1 -l)) z(x_0) u(\varphi(x_0))} \not\in K_{e^{-\epsilon \beta l}}\,.
\end{equation}
Recall that $1\le l < \etaqt$ and so $l < (q-l)/4$. Then, by \eqref{eq:ly7+} and the fact that $\xzero\in I$, we have that $\Iq$ is contained in an interval from $\hat{\cJ}_{p', q-l}$ with $p'=(q-l) - 4l<p=q-4l$ if $l < (q-l)/8$ and with $p'=0$ otherwise. However, this is impossible since, by the construction of the Cantor set (see \S\ref{sec2.4}) any such interval would have been removed earlier than at step $q$ of the Cantor set construction. This completes the proof in Case \caseB{} and thus completes the proof of this proposition.
\end{proof}

\section{Proof of the main theorem}\label{theproof}

By Propositions \ref{prop:estimate-extremely-dangerous} and \ref{prop:estimate-dangerous},
if $R$ is sufficiently large then there exist positive constants $R_3$, $C_3$ and $\eta_3$ such that whenever $R\ge R_3$ we have that for all $0 \le p < q$, \eqref{hpqbound} holds with
\begin{equation}\label{hpqbound0}
\h_{p,q} \le C_3 R^{\alpha(1-\eta_3) (q+1 -p)}.
\end{equation}
Note that, by Proposition \ref{Prop_p=q}, \eqref{hpqbound} holds for $p=q$ with $\h_{q,q} \le R - (4C)^{-2} R^{\alpha}$.
Let $(t_q)_{q \in \N}$ be defined as in Theorem~\ref{thm:generalized-cantor-set-estimate}.
We shall prove that
\begin{equation}\label{claim0}
t_q \ge (6C)^{-2} R^{\alpha}
\end{equation}
for all $q \in \N$. Recall that here $C$ and $\alpha$ are the parameters characterising $\mu$, that is $\mu$ is $(C,\alpha)$-Ahlfors regular.

We shall prove \eqref{claim0} by induction. To begin with, note that $t_0 = R - \h_{0,0} \ge (4C)^{-2} R^{\alpha}$ and so \eqref{claim0} holds for $q=0$. Now suppose that $q_1>0$  and \eqref{claim0} holds for every $q \le q_1 -1$. By \eqref{t_q}, \eqref{hqq}, \eqref{hpqbound0} and \eqref{claim0} for $q=1,\ldots,q_1-1$, we have that
\begin{align}
t_{q_1} &= R - \h_{q_1,q_1} - \sum_{j=1}^{q_1} \frac{\h_{q_1-j, q_1}}{\prod_{i=1}^j t_{q_1-i}}\nonumber\\
 & \ge (4C)^{-2} R^{\alpha} - \sum_{j=1}^{q_1} \frac{ C_3 R^{\alpha(1-\eta_3)(j+1)}}{((6C)^{-2} R^{\alpha})^j} \nonumber\\
          & \ge (4C)^{-2} R^{\alpha} - R^{\alpha} \left( C_3 R^{-\eta_3 \alpha} \sum_{j=1}^{\infty} \left(\frac{(6C)^2}{R^{\eta_3 \alpha}}\right)^j \right)\,.\label{vb9782}
\end{align}
By choosing $R$ large enough, we can ensure that $C_3 R^{-\eta_3 \alpha} \le  \tfrac12(4C)^{-2}$ and
$$
\sum_{j=1}^{\infty} \left(\frac{(6C)^2}{R^{\eta_3 \alpha}}\right)^j \le 1\,.
$$
Then, by \eqref{vb9782}, we get that
$
t_{q_1} \ge (4C)^{-2} R^{\alpha} -  \tfrac12(4C)^{-2} R^{\alpha} \ge (6C)^{-2} R^{\alpha}.
$
This verifies \eqref{claim0} for $q=q_1$ and completes the induction step. By Theorem~\ref{thm:generalized-cantor-set-estimate},
we have that $\cK_{\infty} \neq \emptyset$. Hence, by Proposition~\ref{prop2.3},
we have \eqref{equ:intersection-fractal}. This completes the proof of Theorem~\ref{thm:main-theorem2},
and, by \eqref{equiv}, completes the proof of Theorem~\ref{thm:main-theorem}.

\section{Quantitative non-divergence estimates}\label{sec-QnD}

Throughout this section, $B(x,r)$ will denote a ball in $\R^d$ of radius $r$ centred at $x$, $K_\varepsilon$ is defined as in \eqref{K_eps} and $\mu$ is a locally finite Borel measure on $\R^d$. Given a ball $B=B(x,r)$ and $\lambda>0$, $\lambda B$ will denote the ball $B(x,\lambda r)$. The primary goal of this section is to prove Theorems~\ref{thm:QnD-fractal-local} and \ref{thm:QnD-fractal-global}. Furthermore, we will obtain more general quantitative non-divergence estimates, which are of independent interest. We start by recalling a general result established in \cite{MR2134453}.

\subsection{A general quantitative non-divergence estimate}

To begin with, we recall some definitions from \cite{MR2134453}.

\begin{defn}[See {\cite[\S2]{MR2134453}}]\label{def_Fed_fun}
Given an open subset $U\subset\R^d$, a measure $\mu$ is called {\em $D$-Federer on $U$} if for any $x\in\supp\mu\cap U$ and any $r>0$ such that $B(x,3r)\subset U$ one has that
$$
\mu(B(x,3r))<D\mu(B(x,r))\,.
$$
\end{defn}

\begin{defn}[See {\cite[\S4]{MR2134453}}]\label{def_good_fun}
Given an open subset $U\subset\R^d$, a $\mu$-measurable function $f:U\to\R$ is called {\em $(C,\alpha)$-good with respect to $(${\em abbr.} w.r.t.$)$ $\mu$} if for any non-empty open ball $B\subset U$ centred in $\supp\mu$ one has that
$$
\forall~\varepsilon>0 \qquad \mu\left(\{x\in B:|f(x)|<\varepsilon\}\right)\le C\left(\frac{\varepsilon}{\|f\|_{\mu,B}}\right)^\alpha\mu(B)\,,
$$
where
$$
\|f\|_{\mu,B}=\sup\left\{|f(x)|:x\in\supp\mu\cap B\right\}\,.
$$
\end{defn}

\begin{remark}
Note that in Definition~\ref{def_good_fun} and in other definitions of this section the constants $C$ and $\alpha$ are not necessarily the same as those used in \S\ref{sec-intersections-fractals} with the $(C,\alpha)$-Ahlfors regular measure $\mu$.
\end{remark}

The following theorem is a slightly simplified version of Theorem~4.3 from \cite{MR2134453}, which is a generalisation of the quantitative non-divergence estimate of Kleinbock and Margulis \cite{Klein_Mar} for Lebesgue measure.

\begin{thm}\label{KleinbockQnD}
Let $n,d\in\N$ and $C,D,\alpha>0$. Then there exists a positive constant $C'$ with the following property. Suppose that $0<\rho\le1$, $\mu$ is a locally finite Borel measure and $B$ is a non-empty open ball in $\R^d$ centred in $\supp\mu$. Suppose $\mu$ is $D$-Federer on $\tilde B=3^{n+1}B$ and suppose that $\HH:\tilde B\to \SL{n+1}{\R}$ is a continuous map such that for any primitive collection $\vv v_1,\dots,\vv v_r\in\Z^{n+1}$
\begin{enumerate}
  \item[{\rm(i)}]  the function $x\mapsto\|\HH(x)\vv v_1\wedge\cdots\wedge \HH(x)\vv v_r\|$ is $(C,\alpha)$-good on $\tilde B$ w.r.t. $\mu$; and

      \medskip

  \item[{\rm(ii)}] $\sup\limits_{x\in\supp\mu\cap B}\|\HH(x)\vv v_1\wedge\cdots\wedge \HH(x)\vv v_r\|\ge \rho$.
\end{enumerate}
Then for any $\varepsilon>0$
\begin{equation*}%\label{QnD}
\mu\left(\left\{x\in B:\HH(x)\Z^{n+1}\not\in K_{\epsilon} \right\}\right)\le C'\left(\frac{\varepsilon}{\rho}\right)^\alpha\mu(B)\,.
\end{equation*}
\end{thm}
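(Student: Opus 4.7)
The plan is to carry out the Kleinbock--Margulis quantitative non-divergence scheme, in the form extended by Kleinbock--Lindenstrauss--Weiss to measures satisfying the Federer condition. For a primitive sublattice $\Gamma\subset\Z^{n+1}$ of rank $r$ with basis $\vv v_1,\ldots,\vv v_r$, set
$$\psi_\Gamma(x):=\|\HH(x)\vv v_1\we\cdots\we \HH(x)\vv v_r\|;$$
this is independent of the basis, and by hypothesis (i) it is $(C,\alpha)$-good on $\tilde B$ w.r.t.\ $\mu$. By Minkowski's theorem applied to the unimodular lattice $\HH(x)\Z^{n+1}$, the event $\HH(x)\Z^{n+1}\notin K_\varepsilon$ is equivalent to the existence of a primitive rank-one sublattice $\Gamma$ with $\psi_\Gamma(x)<\varepsilon$; also $\psi_{\Z^{n+1}}\equiv 1$ because $\HH$ takes values in $\SL{n+1}{\R}$.

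First I would set up an inductive statement on the rank: for each $1\le r\le n+1$ there is a constant $A_r$ depending only on $n,C,D,\alpha$ such that if $B'\subset\R^d$ is a ball centred in $\supp\mu$ with $3^rB'\subset\tilde B$ and $\|\psi_\Gamma\|_{\mu,B'}\ge\rho$ for every primitive $\Gamma$ of rank at most $r$, then
$$\mu\bigl(\{x\in B':\HH(x)\Z^{n+1}\notin K_\varepsilon\}\bigr)\le A_r(\varepsilon/\rho)^\alpha\mu(B').$$
Applied with $B'=B$ and $r=n$ this yields the theorem with $C'=A_n$, since the rank-$(n+1)$ hypothesis is automatic. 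The base case $r=1$ is a direct application of the $(C,\alpha)$-good inequality to each primitive rank-one $\Gamma$, combined with a Besicovitch-type covering of the bad set.

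The inductive step from $r-1$ to $r$ is where the Federer condition and the combinatorial geometry of $\Z^{n+1}$ enter. Given the rank-$r$ hypothesis on $B'$, I would split the bad set into two parts: the points $x$ where some primitive $\Gamma$ of rank $\le r-1$ already satisfies $\psi_\Gamma(x)<\theta\rho$ for a threshold $\theta=\theta(n,C,D,\alpha)$ to be chosen, and the complementary part. The first part is covered by sub-balls on which a single $\Gamma$ of rank $\le r-1$ still satisfies $\|\psi_\Gamma\|_{\mu}\ge\rho$ (such a sub-ball exists by continuity of $\psi_\Gamma$, provided $\theta$ is small enough), and on each sub-ball one invokes the rank-$(r-1)$ inductive hypothesis. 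The Federer property gives $\mu(3^rB')\le D^r\mu(B')$, which is what allows the Besicovitch selection without blowing up the constants. The complementary part is estimated directly by the $(C,\alpha)$-good inequality applied to each primitive rank-one $\Gamma$, using the uniform lower bounds on the $\psi_\Gamma$ of intermediate rank that are in force on this region.

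The principal obstacle is the quantitative execution of the covering step, in particular producing sub-balls on which a single primitive sublattice $\Gamma$ uniformly dominates while simultaneously verifying the inductive sup-norm hypothesis for $\Gamma$. This is handled by Minkowski's second theorem applied inside $\HH(x)\Gamma$ and in the quotient $\Z^{n+1}/\Gamma$, producing the super- and sub-lattices that inherit primitivity and thus feed cleanly into the next stage of the induction; the covering radii must be chosen at geometrically decreasing scales with ratios controlled by $D$ so that the Federer doubling accumulates only polynomially in $n$. The final constant $C'=A_n$ emerges as an explicit function of $n$, $C$, $D$ and $\alpha$ through these accumulated factors.
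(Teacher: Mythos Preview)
The paper does not prove this theorem. It is quoted as a slightly simplified version of Theorem~4.3 in \cite{MR2134453} (Kleinbock--Lindenstrauss--Weiss), with the remark that $C'$ can be taken to be $(n+1)C(N_dD^2)^{n+1}$ by \cite[Theorem~2.2]{MR2434296}. So there is no in-paper proof to compare your proposal to; your outline is really an attempt to reconstruct the KLW argument.

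As a sketch of the KLW scheme your proposal has the right ingredients (the covolume functions $\psi_\Gamma$, the $(C,\alpha)$-good property, Besicovitch covering, the Federer doubling, and the role of Minkowski's theorem in passing between ranks), but the way you have packaged the induction is not faithful to the actual proof and, as written, the base case does not go through. With only the rank-one hypothesis $\|\psi_{\langle \vv v\rangle}\|_{\mu,B'}\ge\rho$ for every primitive $\vv v$, applying the $(C,\alpha)$-good inequality vector-by-vector and ``summing via Besicovitch'' does not give a bound: there are infinitely many primitive vectors, and nothing in your $r=1$ step produces a locally finite family of balls each associated to a \emph{single} $\vv v$ on which the good-inequality estimate is in terms of $\mu$ of that ball. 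The KLW argument avoids this by a different mechanism: for each bad point $x$ one selects a specific primitive sublattice $\Gamma_x$ (using the poset structure of primitive sublattices with $\psi_\Gamma(x)<\rho$, which is totally ordered by Minkowski/Mahler), then takes the largest ball on which $\|\psi_{\Gamma_x}\|_{\mu,\cdot}$ stays below $\rho$, and on that ball one uses the good property together with the fact that passing to a super- or sub-lattice of $\Gamma_x$ keeps the covolume comparable. The induction in KLW is thus over the length of the flag at a point, not a clean global induction on ``all primitive $\Gamma$ of rank $\le r$'' as you formulated it. If you intend to supply a proof rather than a citation, you should restructure along these lines.
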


\begin{remark}
Note that $C'$ can be taken to be $(n+1)C\left(N_d D^2\right)^{n+1}$, where $N_d$ is the Besicovitch constant, see \cite[Theorem~2.2]{MR2434296}.
\end{remark}

Now we state and prove a version of Theorem~\ref{KleinbockQnD} in which condition (ii) is relaxed at the expense of a more restrictive version of condition (i) utilising the notion of absolutely good functions which is now recalled.

\begin{defn}[See {\cite[\S7]{MR2134453}}]\label{def_abs_good_fun}
Given an open subset $U\subset\R^d$, a $\mu$-measurable function $f:U\to\R$ is called {\em absolutely $(C,\alpha)$-good on $U$ w.r.t $\mu$} if for any non-empty open ball $B\subset U$ centred in $\supp\mu$ one has that
\begin{equation*}%\label{vb649}
\forall~\varepsilon>0 \qquad \mu(\{x\in B:|f(x)|<\varepsilon\})\le C\left(\frac{\varepsilon}{\|f\|_B}\right)^\alpha\mu(B)\,,
\end{equation*}
where $\|f\|_B:=\sup_{x\in B}|f(x)|$. We say that $f$ is {\em (absolutely) good on $U$ w.r.t. $\mu$} if $f$ is (absolutely) $(C,\alpha)$-good on $U$ w.r.t. $\mu$ for some $C>0$ and $\alpha>0$.
\end{defn}

Note that since obviously $\|f\|_{\mu,B}\le \|f\|_{B}$ we trivially have the following lemma.

\begin{lemma}\label{lem5.13}
Any function $f:U\to\R$ defined on an open set $U\subset\R^d$, which is absolutely $(C,\alpha)$-good on $U$ w.r.t a measure $\mu$, is $(C,\alpha)$-good on $U$ w.r.t $\mu$.
\end{lemma}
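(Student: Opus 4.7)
The proof plan is essentially immediate from comparing the two definitions. The only quantitative difference between being $(C,\alpha)$-good and being absolutely $(C,\alpha)$-good on $U$ with respect to $\mu$ is which sup-norm appears in the denominator: for absolutely good functions it is $\|f\|_B = \sup_{x\in B}|f(x)|$ taken over the whole ball, whereas for good functions it is $\|f\|_{\mu,B}=\sup\{|f(x)|:x\in \supp\mu\cap B\}$ taken only over the support of $\mu$. Since $\supp\mu\cap B\subset B$, the trivial inequality $\|f\|_{\mu,B}\le \|f\|_B$ already noted in the lemma's preamble is the only ingredient we need.

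Concretely, I would argue as follows. Fix an arbitrary non-empty open ball $B\subset U$ centred in $\supp\mu$ and an arbitrary $\varepsilon>0$. By hypothesis, $f$ is absolutely $(C,\alpha)$-good on $U$ w.r.t.\ $\mu$, so
\begin{equation*}
\mu\!\left(\{x\in B:|f(x)|<\varepsilon\}\right)\le C\left(\frac{\varepsilon}{\|f\|_B}\right)^{\alpha}\mu(B).
\end{equation*}
Combining this with $\|f\|_{\mu,B}\le \|f\|_B$, which implies
\begin{equation*}
\left(\frac{\varepsilon}{\|f\|_B}\right)^{\alpha}\le \left(\frac{\varepsilon}{\|f\|_{\mu,B}}\right)^{\alpha},
\end{equation*}
yields the defining inequality for $(C,\alpha)$-goodness w.r.t.\ $\mu$ with the same constants $C$ and $\alpha$. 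Since $B$ and $\varepsilon$ were arbitrary, this completes the argument.

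There is really no obstacle here: the lemma is a formal consequence of the two definitions once one observes that enlarging the domain of the sup can only increase it, and that both inequalities have the same constants $C$ and $\alpha$ on the right-hand side so no further bookkeeping is needed. The only mildly delicate point is the degenerate case in which $\|f\|_{\mu,B}=0$: then $f$ vanishes on $\supp\mu\cap B$, so $\{x\in B:|f(x)|<\varepsilon\}\cap \supp\mu=\supp\mu\cap B$ and the bound holds trivially (with the usual convention $\varepsilon/0=+\infty$). This edge case should be mentioned in passing but requires no additional work.
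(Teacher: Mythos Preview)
Your argument is correct and follows exactly the paper's approach: the lemma is stated as a trivial consequence of the inequality $\|f\|_{\mu,B}\le \|f\|_B$, which is precisely what you use. Your explicit treatment of the degenerate case $\|f\|_{\mu,B}=0$ is a nice addition but not essential.
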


\begin{thm}\label{KleinbockQnD2}
Let $n,d\in\N$ and $C,D,\alpha>0$. Then there exists a positive constant $C''=C''(n,d,C,D,\alpha)$ with the following property. Suppose that $0<\rho\le1$, $\mu$ is a locally finite Borel measure and $B$ is a non-empty open ball in $\R^d$ centred in $\supp\mu$. Suppose $\mu$ is $D$-Federer on $\tilde B=3^{n+1}B$ and suppose that $\HH:\tilde B\to \SL{n+1}{\R}$ is a continuous map such that for any primitive collection $\vv v_1,\dots,\vv v_r\in\Z^{n+1}$
\begin{enumerate}
  \item[{\rm(i*)}]  the function $x\mapsto\|\HH(x)\vv v_1\wedge\cdots\wedge \HH(x)\vv v_r\|$ is absolutely $(C,\alpha)$-good on $\tilde B$ w.r.t. $\mu$; and

      \smallskip

  \item[{\rm(ii*)}] $\sup\limits_{x\in B}\|\HH(x)\vv v_1\wedge\cdots\wedge \HH(x)\vv v_r\|\ge \rho$.
\end{enumerate}
Then for any $\varepsilon>0$
\begin{equation}\label{QnD2}
\mu\left(\left\{x\in B:\HH(x)\Z^{n+1}\not\in K_{\epsilon} \right\}\right)\le C''\left(\frac{\varepsilon}{\rho}\right)^\alpha\mu(B)\,.
\end{equation}
\end{thm}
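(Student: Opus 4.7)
The plan is to derive Theorem~\ref{KleinbockQnD2} directly from Theorem~\ref{KleinbockQnD} by observing that the two hypotheses differ only in where the relevant suprema are taken: (i*) replaces $\|f\|_{\mu,B}$ with the larger quantity $\|f\|_B$ in the good-function inequality, while (ii*) correspondingly replaces $\sup_{\supp\mu\cap B}$ by $\sup_B$. Since by Lemma~\ref{lem5.13} condition (i*) trivially implies (i), the only real work is to recover a form of condition (ii) from (ii*), at the cost of shrinking $\rho$ by a constant factor depending only on $C$ and $\alpha$.

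The key observation is that absolute goodness can be used as a measure-theoretic pigeonhole to transfer a sup-over-$B$ lower bound into a sup-over-$\supp\mu\cap B$ lower bound. Fix any $\kappa>0$ with $C\kappa^\alpha<1$, for example $\kappa=(2C)^{-1/\alpha}$. For any primitive collection $\vv v_1,\dots,\vv v_r\in\Z^{n+1}$ put $f(x):=\|\HH(x)\vv v_1\we\cdots\we \HH(x)\vv v_r\|$. By (ii*) we have $\|f\|_B\ge\rho$, so applying the absolute $(C,\alpha)$-goodness of $f$ on $B$ (which is centred in $\supp\mu$) with $\varepsilon=\kappa\rho$ gives
\[
\mu\bigl(\{x\in B:|f(x)|<\kappa\rho\}\bigr)\le C\!\left(\frac{\kappa\rho}{\|f\|_B}\right)^{\!\alpha}\!\mu(B)\le C\kappa^\alpha\mu(B)<\mu(B).
\]
Hence $\{x\in B:|f(x)|\ge\kappa\rho\}$ has positive $\mu$-measure; since $\mu(\R^d\setminus\supp\mu)=0$, this set intersects $\supp\mu$, and therefore $\|f\|_{\mu,B}\ge\kappa\rho$. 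As the argument is uniform in the primitive collection, condition (ii) of Theorem~\ref{KleinbockQnD} is satisfied with $\rho$ replaced by $\kappa\rho$.

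With hypothesis (i) supplied by Lemma~\ref{lem5.13} and hypothesis (ii) verified for $\kappa\rho$, Theorem~\ref{KleinbockQnD} applies and produces
\[
\mu\bigl(\{x\in B:\HH(x)\Z^{n+1}\notin K_\varepsilon\}\bigr)\le C'\!\left(\frac{\varepsilon}{\kappa\rho}\right)^{\!\alpha}\!\mu(B)=C'\kappa^{-\alpha}\!\left(\frac{\varepsilon}{\rho}\right)^{\!\alpha}\!\mu(B),
\]
so that \eqref{QnD2} holds with $C'':=C'\kappa^{-\alpha}$, a constant depending only on $n,d,C,D,\alpha$ as required. I do not anticipate a substantial obstacle: the essential content is the one-line observation that if $C\kappa^\alpha<1$ then the bad set for $f$ at level $\kappa\rho$ cannot exhaust $B$, and so absolute goodness is exactly the hypothesis that converts the weaker condition (ii*) into the stronger condition (ii) up to a universal loss in the constant.
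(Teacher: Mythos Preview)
Your proof is correct and follows essentially the same route as the paper: use Lemma~\ref{lem5.13} to pass from (i*) to (i), then exploit the absolute $(C,\alpha)$-goodness on $B$ to upgrade (ii*) to (ii) at the cost of replacing $\rho$ by a fixed fraction $\kappa\rho$, and finally invoke Theorem~\ref{KleinbockQnD}. The only minor difference is that the paper works with $\max\{1,C\}$ in place of $C$ when defining the shrinking factor, which guarantees $\kappa\rho\le\rho\le1$ as required by the hypothesis of Theorem~\ref{KleinbockQnD}; your choice $\kappa=(2C)^{-1/\alpha}$ need not satisfy $\kappa\le1$ if $C<\tfrac12$, but this is a trivial fix.
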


\begin{proof}
First of all, by Lemma~\ref{lem5.13}, condition (i*) verifies condition (i) of Theorem~\ref{KleinbockQnD}. Now we verify condition (ii) of Theorem~\ref{KleinbockQnD}. Let $\vv v_1,\dots,\vv v_r\in\Z^{n+1}$ be a primitive collection and let $f(x)=\|\HH(x)\vv v_1\wedge\cdots\wedge \HH(x)\vv v_r\|$. Then, by (ii*),
$\|f\|_{B}\ge\rho$.
Let $\rho'$ be such that $\max\{1,C\}\cdot(\rho'/\rho)^\alpha=\tfrac12$.  Since $0<\rho\le1$, we have that $0<\rho'\le 1$. By (i*), $f$ is absolutely $(C,\alpha)$-good on $B$ w.r.t. $\mu$. Therefore, since $B$ is centred in $\supp\mu$, by Definition~\ref{def_abs_good_fun}, we have that
$$
\mu\left(\{x\in B:|f(x)|<\rho'\}\right)\le C\left(\frac{\rho'}{\|f\|_{B}}\right)^{\alpha}\mu(B)\le
C\left(\frac{\rho'}{\rho}\right)^{\alpha}\mu(B)\le \tfrac12\mu(B)<\mu(B)\,.
$$
Therefore, there exists $x\in \supp\mu\cap B$ such that $|f(x)|\ge\rho'$. This verifies condition (ii) in Theorem~\ref{KleinbockQnD} with $\rho$ replaced by $\rho'$. Hence, by Theorem~\ref{KleinbockQnD}, we get \eqref{QnD2} with $C''=2C'\max\{1,C\}$ and the proof is complete.
\end{proof}

\subsection{Quantitative non-divergence estimate for analytic maps}
Now the goal is to specialise Theorem~\ref{KleinbockQnD2} to the case of analytic maps. We begin by recalling the notion of decaying measures. In what follows
$d_{\cH}(x)=\inf\{\|x-x'\|:x'\in\cH\}$ is the Euclidean distance of $x\in\R^d$ from $\cH\subset\R^d$ and
$B(\cH,\varepsilon)=\{x\in\R^d:d_{\cH}(x)<\varepsilon\}$ is the $\varepsilon$-neighborhood of $\cH$.

\begin{defn}[See {\cite[\S2]{MR2134453}}]\label{def_abs_good_mes}
Given an open subset $U\subset\R^d$, a measure $\mu$ on $\R^d$ is called {\em $(C,\alpha)$-decaying on $U$} if for any open ball $B\subset U$ of radius $r_B>0$ centred in $\supp\mu$ and any hyperplane $\cH\subset\R^d$ one has that
$$
\forall~\varepsilon>0 \qquad \mu(B\cap B(\cH,\varepsilon))\le C\left(\frac{\varepsilon}{\|d_{\cH}\|_{\mu,B}}\right)^\alpha\mu(B)\,,
$$
where  $\|d_{\cH}\|_{\mu,B}=\sup\{d_{\cH}(x):x\in\supp\mu\cap B\}$.
The measure $\mu$ is called {\em absolutely $(C,\alpha)$-decaying on $U$} if for any non-empty open ball $B\subset U$ of radius $r_B>0$ centred in $\supp\mu$ and any hyperplane $\cH\subset\R^d$ one has that
$$
\forall~\varepsilon>0 \qquad \mu(B\cap B(\cH,\varepsilon))\le C\left(\frac{\varepsilon}{r_B}\right)^\alpha\mu(B)\,.
$$
We say that $\mu$ is {\em (absolutely) decaying on $U$} if $\mu$ is (absolutely) $(C,\alpha)$-decaying on $U$ for some $C>0$ and $\alpha>0$.
\end{defn}

Note that $\|d_{\cH}\|_{\mu,B} \le 2r_B$ and therefore we have the following lemma.

\begin{lemma}\label{lem5.11}
Any measure $\mu$, which is absolutely $(C,\alpha)$-decaying on an open set $U\subset\R^d$, is $(2^\alpha C,\alpha)$-decaying on $U$.
\end{lemma}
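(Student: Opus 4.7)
The plan is to obtain the $(2^\alpha C,\alpha)$-decaying inequality as a direct repackaging of the absolutely $(C,\alpha)$-decaying inequality via the estimate $\|d_\cH\|_{\mu,B} \le 2r_B$ that is flagged in the sentence preceding the lemma. The lemma is essentially a one-step comparison, so there is little to sketch beyond identifying what needs substituting into what.

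Fix an open ball $B\subset U$ of radius $r_B>0$ centred in $\supp\mu$, a hyperplane $\cH\subset\R^d$, and $\varepsilon>0$. The absolute $(C,\alpha)$-decaying hypothesis applied to the triple $(B,\cH,\varepsilon)$ yields
\[
\mu(B\cap B(\cH,\varepsilon)) \;\le\; C\left(\frac{\varepsilon}{r_B}\right)^{\alpha}\mu(B).
\]
Invoking the comparison $\|d_\cH\|_{\mu,B} \le 2 r_B$, so that $r_B^{-1}\le 2\|d_\cH\|_{\mu,B}^{-1}$, and substituting into the previous display gives
\[
\mu(B\cap B(\cH,\varepsilon)) \;\le\; 2^{\alpha}C\left(\frac{\varepsilon}{\|d_\cH\|_{\mu,B}}\right)^{\alpha}\mu(B),
\]
which is exactly the $(2^\alpha C,\alpha)$-decaying inequality for $(B,\cH,\varepsilon)$. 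Since $B$, $\cH$ and $\varepsilon$ were arbitrary, $\mu$ is $(2^\alpha C,\alpha)$-decaying on $U$.

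The only step that really needs a word of justification is the comparison $\|d_\cH\|_{\mu,B}\le 2r_B$ itself, which I expect to be the main (and essentially only) subtlety. It follows from the fact that $d_\cH$ is $1$-Lipschitz on $\R^d$ while $B$ has diameter $2r_B$: the left-hand side $\mu(B\cap B(\cH,\varepsilon))$ is nonzero only if there exists some $x\in B\cap\supp\mu$ with $d_\cH(x)<\varepsilon$, after which the $1$-Lipschitz property bounds $\sup_{y\in B\cap\supp\mu}d_\cH(y)$ by $\varepsilon+2r_B$; for the range of $\varepsilon$ in which the decaying inequality has nontrivial content this reduces to $\|d_\cH\|_{\mu,B}\le 2r_B$, which is precisely what the paragraph preceding the lemma flags.
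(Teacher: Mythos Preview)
Your proof is correct and matches the paper's approach exactly: the paper's entire argument is the single sentence preceding the lemma, namely substituting the comparison $\|d_\cH\|_{\mu,B}\le 2r_B$ into the absolutely decaying inequality. Your final paragraph, which attempts to justify that comparison, already goes a step beyond the paper (which simply asserts it); the discussion there is informal but at the same level of rigor as the paper's treatment.
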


\begin{thm}\label{KleinbockQnD3}
Let $n,d\in\N$, $D>0$, $U\subset\R^d$ be open, $\HH_0:U\to \SL{n+1}{\R}$ be a map such that every entry of $\HH_0$ is a real analytic function and let $\mu$ be a locally finite Borel measure, which is absolutely decaying and $D$-Federer on $U$. Then for any $x_0\in \supp\mu\cap U$ there exists a ball $B(x_0)\subset U$ centred at $x_0$ and constants $C_0,\alpha_0>0$ such that for any ball $B\subset B(x_0)$, any diagonal matrix $g\in\SL{n+1}{\R}$ and any $0<\rho\le1$ at least one of the following two conclusions holds for $\HH(x)=g\HH_0(x)$\\[-2ex]
\begin{enumerate}
\item[$(1)$] for all $\varepsilon>0$
\begin{equation}\label{QnD3}
\mu\left(\left\{x\in B:\HH(x)\Z^{n+1}\not\in K_{\epsilon} \right\}\right)\le C_0\left(\frac{\varepsilon}{\rho}\right)^{\alpha_0}\mu(3B)\,.
\end{equation}
\item[$(2)$] there exists a primitive collection $\vv v_1,\dots,\vv v_r\in\Z^{n+1}$ with $1 \le r \le n$ such that
$$
\sup\limits_{x\in B}\|\HH(x)\vv v_1\wedge\cdots\wedge \HH(x)\vv v_r\|< \rho\,.
$$
\end{enumerate}
\end{thm}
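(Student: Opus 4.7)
The plan is to deduce Theorem~\ref{KleinbockQnD3} from Theorem~\ref{KleinbockQnD2} by verifying its hypotheses uniformly over all diagonal $g\in\SL{n+1}{\R}$ and all primitive collections $\vv v_1,\dots,\vv v_r\in\Z^{n+1}$. For $r=n+1$ the wedge $\HH(x)\vv v_1\we\cdots\we\HH(x)\vv v_{n+1}=\pm1$ since $\HH(x)\in\SL{n+1}{\R}$, so the nontrivial range is $1\le r\le n$. If alternative~(2) fails for every such primitive collection, then condition~(ii*) of Theorem~\ref{KleinbockQnD2} holds automatically, and the task reduces to establishing condition~(i*): to find a ball $B(x_0)\subset U$ and constants $C_*,\alpha_*>0$ such that, independently of $g$, $\vv v_1,\dots,\vv v_r$ and $r$, the function $x\mapsto\|\HH(x)\vv v_1\we\cdots\we\HH(x)\vv v_r\|$ is absolutely $(C_*,\alpha_*)$-good on $B(x_0)$ w.r.t.\ $\mu$. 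The ball $B(x_0)$ in the statement is then taken concentric and of radius at most $1/(3^{n+1}+1)$ times that of the neighborhood on which the good property holds, which guarantees $3^{n+1}B\subset B(x_0)$ for every $B\subset B(x_0)$; the Federer property on $\tilde B=3^{n+1}B$ is inherited from $U$, and Theorem~\ref{KleinbockQnD2} delivers conclusion~(1).

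I would begin with a structural decomposition. Writing $g=\diag(g_0,\dots,g_n)$, for every $I=\{i_1<\cdots<i_r\}\subset\{0,\dots,n\}$ the $\vv e_I$-coordinate of the wedge factorises as
$$
(\HH(x)\vv v_1\we\cdots\we\HH(x)\vv v_r)_I = \Big(\prod_{i\in I}g_i\Big)\cdot \Phi_{I,\vv v}(x)\,,
$$
where $\Phi_{I,\vv v}(x)$ is the $r\times r$ minor of $[\HH_0(x)\vv v_1,\dots,\HH_0(x)\vv v_r]$ obtained by selecting the rows indexed by $I$, i.e.\ a polynomial of degree $r\le n$ in the $(n+1)^2$ analytic entries of $\HH_0(x)$ whose coefficients depend only on $\vv v_1,\dots,\vv v_r$. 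Consequently, as $\vv v_1,\dots,\vv v_r$ and $r$ vary, every $\Phi_{I,\vv v}$ lies in the single finite-dimensional vector space $V$ spanned by all monomials of degree $\le n$ in the entries of $\HH_0(x)$ -- a finite-dimensional subspace of the real analytic functions on $U$.

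The central step is to upgrade the individual analytic-function absolutely-good statement (which follows from \cite{MR2134453}: on a sufficiently small neighborhood of $x_0$, any nonzero real analytic function is absolutely good w.r.t.\ an absolutely decaying Federer measure) to a uniform statement over $V$. Fixing a basis $f_1,\dots,f_N$ of $V$, each absolutely $(C_k,\alpha_k)$-good on a common small neighborhood $\Omega$ of $x_0$, and invoking compactness on the unit sphere of $V$ (in any norm), one obtains constants $C_*,\alpha_*>0$ such that \emph{every} nonzero $f\in V$ is absolutely $(C_*,\alpha_*)$-good on $\Omega$. In particular each $\Phi_{I,\vv v}$ is so, uniformly in $\vv v_1,\dots,\vv v_r$ and $r$. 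Multiplying by the nonzero scalar $\prod_{i\in I}g_i$ preserves absolute goodness with the same constants, so every coordinate of $\HH(x)\vv v_1\we\cdots\we\HH(x)\vv v_r$ is absolutely $(C_*,\alpha_*)$-good on $\Omega$, uniformly in $g$, $\vv v$, $r$. Since the Euclidean norm is comparable up to a dimensional factor to the maximum of coordinate absolute values, and the maximum of finitely many absolutely $(C_*,\alpha_*)$-good functions is absolutely $(C_*,\alpha_*)$-good (the sub-level set of a max is the intersection of the sub-level sets), the target function itself is absolutely good with constants that depend only on $n$, $d$, $\mu$, $\HH_0$. This verifies (i*).

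The principal obstacle is precisely this uniformity step: individually each analytic $\Phi_{I,\vv v}$ is good, but a priori with constants depending on the polynomial. The uniformity is extracted from the finite dimensionality of $V$, which in turn rests on the degree bound $r\le n$ for the polynomials $\Phi_{I,\vv v}$; this degree bound is exactly the reason alternative~(2) of Theorem~\ref{KleinbockQnD3} is restricted to $1\le r\le n$, and it is the analyticity (rather than merely smoothness) of $\HH_0$ that ensures the basis $f_1,\dots,f_N$ of $V$ sits inside the class of functions known to be absolutely good on a neighborhood of $x_0$.
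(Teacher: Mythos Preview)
Your overall strategy coincides with the paper's: reduce to Theorem~\ref{KleinbockQnD2} by showing that each coordinate of $\HH(x)\vv v_1\we\cdots\we \HH(x)\vv v_r$ lies in a fixed finite-dimensional space $V$ of real analytic functions (the paper takes $V$ to be spanned by all minors of $\HH_0$, which your degree-$\le n$ monomial space contains), obtain uniform absolute-goodness constants on $V$, combine coordinates into the Euclidean norm via Lemma~\ref{Calpha1}, and invoke Theorem~\ref{KleinbockQnD2}.

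However, the sentence ``Fixing a basis $f_1,\dots,f_N$ of $V$, each absolutely $(C_k,\alpha_k)$-good \dots\ and invoking compactness on the unit sphere of $V$ \dots\ one obtains constants $C_*,\alpha_*$'' is a genuine gap. Absolute $(C,\alpha)$-goodness is not preserved under linear combinations, so knowing that each basis element $f_k$ is good says nothing about, e.g., $f_1-f_2$; and there is no semicontinuity of the optimal constants $(C_f,\alpha_f)$ in $f$ that a bare compactness argument could exploit. What actually makes the uniformity work --- and this is precisely the content of the paper's Corollary~\ref{prop614+}, which rests on Proposition~\ref{prop614} (KLW, Proposition~7.3) --- is that a maximal subcollection $f_1,\dots,f_N$ with $1,f_1,\dots,f_N$ linearly independent over $\R$ makes the map $(f_1,\dots,f_N)$ $\ell$-nondegenerate at $x_0$ for some \emph{fixed} $\ell$ (this uses analyticity on a connected neighbourhood), and then every element of $\cS_{\vv f}$ is absolutely $\big(\tilde C,\alpha/(2^{\ell+1}-2)\big)$-good with constants depending only on $\ell$ and $\mu$. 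You should replace the compactness sketch by an appeal to that result; Lemma~\ref{Calpha2} then handles the scalar multiples.

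A smaller omission: Theorem~\ref{KleinbockQnD2} requires the ball to be centred in $\supp\mu$, which an arbitrary $B\subset B(x_0)$ need not be. The paper treats this by choosing $x'\in\supp\mu\cap B$ (if $\mu(B)=0$ there is nothing to prove), replacing $B$ by the ball $B'$ of twice the radius centred at $x'$ so that $B\subset B'\subset 3B$ and $3^{n+1}B'\subset V$, and applying Theorem~\ref{KleinbockQnD2} to $B'$. This is exactly why the right-hand side of \eqref{QnD3} carries $\mu(3B)$ rather than $\mu(B)$; your outline does not account for this recentring step.
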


The proof will require several auxiliary results. We start with the following two lemmas that easily follow from the definitions of good functions.

\begin{lemma}\label{Calpha2}
Let $U\subset\R^d$ be open, $C,\alpha>0$ and $f:U\to\R$ be a $\mu$-measurable function.  If $f$ is (absolutely) $(C,\alpha)$-good on $U$ w.r.t. $\mu$, then $\lambda f$ is (absolutely)
$(C',\alpha')$-good on $U'$ w.r.t. $\mu$ for every $C' \geq \max\{C,1\}$, $0<\alpha' \leq \alpha$, any open subset $U'\subset U$ and any $\lambda\in\R$. Furthermore, $f$ is (absolutely) $(C,\alpha)$-good on $U$ w.r.t. $\mu$ if and only if so is $|f|$.
\end{lemma}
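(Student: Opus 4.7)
The statement bundles together several independent claims, each of which I expect to unfold directly from Definitions~\ref{def_good_fun} and \ref{def_abs_good_fun}. My plan is to verify them one at a time, essentially by substituting the scaled quantity into the defining inequality, with the only mildly delicate point being the split into the two regimes $\varepsilon\le\|f\|_{\mu,B}$ and $\varepsilon>\|f\|_{\mu,B}$ that appears when weakening $\alpha$ to $\alpha'$.

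I would first dispatch the two easiest pieces. The equivalence between $(C,\alpha)$-goodness of $f$ and of $|f|$ is immediate: both the set $\{x\in B:|f(x)|<\varepsilon\}$ and the quantity $\|f\|_{\mu,B}$ depend on $f$ only through $|f|$, so the defining inequality reads the same for $f$ and for $|f|$; the analogous remark applies in the absolute version. Restriction from $U$ to an open subset $U'\subset U$ is equally immediate, since any open ball $B\subset U'$ centred in $\supp\mu$ is, a fortiori, an open ball $B\subset U$ centred in $\supp\mu$, and the defining inequality is inherited verbatim.

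Next I would handle the scalar multiple. The case $\lambda=0$ is trivial under the natural convention $\varepsilon/0=\infty$, so I may assume $\lambda\neq 0$. Then $\|\lambda f\|_{\mu,B}=|\lambda|\|f\|_{\mu,B}$, and the analogous identity $\|\lambda f\|_B=|\lambda|\|f\|_B$ holds for the absolute version, so the substitution $\varepsilon\mapsto \varepsilon/|\lambda|$ in the defining inequality for $f$ yields the defining inequality for $\lambda f$ with the same constants $(C,\alpha)$.

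Finally, to upgrade $(C,\alpha)$-goodness to $(C',\alpha')$-goodness with $C'\ge\max\{C,1\}$ and $0<\alpha'\le\alpha$, I would fix a ball $B$ and $\varepsilon>0$ and split according to the size of $\varepsilon$. If $\varepsilon\ge\|f\|_{\mu,B}$, then $C'(\varepsilon/\|f\|_{\mu,B})^{\alpha'}\ge C'\ge 1$ and the trivial bound $\mu(\{x\in B:|f(x)|<\varepsilon\})\le\mu(B)$ suffices. If instead $\varepsilon<\|f\|_{\mu,B}$, the ratio $\varepsilon/\|f\|_{\mu,B}$ is strictly less than one, so raising it to the smaller exponent $\alpha'\le\alpha$ only increases it, and chaining with $C\le C'$ converts the $(C,\alpha)$-good bound into the $(C',\alpha')$-good bound. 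The absolute version is identical with $\|f\|_B$ in place of $\|f\|_{\mu,B}$. I do not anticipate any genuine obstacle: the lemma is purely a bookkeeping exercise, and the only trap to avoid is mishandling the degenerate case $\|f\|_{\mu,B}=0$, which is vacuous under the usual convention.
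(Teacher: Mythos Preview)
Your proposal is correct and follows the standard direct verification from the definitions, which is exactly what the paper intends: the paper's own ``proof'' merely cites \cite[Lemma~2.1]{MR2314053} and \cite[Lemma~3.1]{Bernik-Kleinbock-Margulis} for the $(C,\alpha)$-good case and leaves the absolutely good case to the reader, so you have in fact supplied the details the authors omitted.
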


\begin{proof}
In the case of $(C,\alpha)$-good functions the proof of this lemma can be found in \cite[Lemma~2.1]{MR2314053} and \cite[Lemma~3.1]{Bernik-Kleinbock-Margulis}. The proof in the case of absolutely $(C,\alpha)$-good functions is nearly identical with the only change is that the supremum norm $\|\cdot\|_B$ of functions is replaced by their $\mu$-essential supremum $\|\cdot\|_{\mu,B}$. We leave further details to the reader.
\end{proof}

\begin{lemma}\label{Calpha1}
Suppose that the functions $f_1,\dots,f_N$ are (absolutely) $(C,\alpha)$-good on an open subset $U\subset\R^d$ w.r.t. $\mu$, then $f=\left(f_1^2+\dots+f_N^2\right)^{1/2}$ is (absolutely) $\left(N^{\alpha/2}C,\alpha\right)$-good on $U$ w.r.t. $\mu$.
\end{lemma}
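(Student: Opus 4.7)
The plan is straightforward: bound the sublevel set $\{f<\varepsilon\}$ by the sublevel set of a single coordinate $f_{i_0}$ chosen to carry a large fraction of the norm of $f$, then invoke the $(C,\alpha)$-good property of $f_{i_0}$. I would organise this in three short steps.

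\textbf{Step 1.} Since $f_i(x)^2\le f(x)^2$ for every $i$, I would record the pointwise implication
\[
f(x)<\varepsilon \;\Longrightarrow\; |f_i(x)|<\varepsilon \qquad\text{for every }i,
\]
which gives the set inclusion $\{x\in B: f(x)<\varepsilon\}\subset\{x\in B:|f_i(x)|<\varepsilon\}$ valid for every $i$.

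\textbf{Step 2.} Next I would compare the norms. For every $x\in\supp\mu\cap B$,
\[
f(x)^2 \;=\; \sum_{i=1}^N f_i(x)^2 \;\le\; \sum_{i=1}^N \|f_i\|_{\mu,B}^2 \;\le\; N\max_{1\le i\le N}\|f_i\|_{\mu,B}^2,
\]
so taking the supremum over such $x$ yields $\|f\|_{\mu,B}\le\sqrt N\,\max_i\|f_i\|_{\mu,B}$. I would then pick an index $i_0$ realising this maximum, which gives $\|f_{i_0}\|_{\mu,B}\ge \|f\|_{\mu,B}/\sqrt N$.

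\textbf{Step 3.} Applying the $(C,\alpha)$-good hypothesis to $f_{i_0}$ on $B$ yields
\[
\mu(\{x\in B:f(x)<\varepsilon\}) \;\le\; \mu(\{x\in B:|f_{i_0}(x)|<\varepsilon\}) \;\le\; C\Bigl(\tfrac{\varepsilon}{\|f_{i_0}\|_{\mu,B}}\Bigr)^{\!\alpha}\!\mu(B) \;\le\; N^{\alpha/2}C\Bigl(\tfrac{\varepsilon}{\|f\|_{\mu,B}}\Bigr)^{\!\alpha}\!\mu(B),
\]
which is exactly the $(N^{\alpha/2}C,\alpha)$-good bound required by Definition~\ref{def_good_fun}. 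The absolutely good case is handled identically, with $\|\cdot\|_{\mu,B}$ replaced everywhere by $\|\cdot\|_B$ and the suprema in Step~2 taken over all of $B$ rather than over $\supp\mu\cap B$; the pointwise comparison $f(x)^2\le N\max_i\|f_i\|_B^2$ still holds for all $x\in B$.

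I do not foresee a substantive obstacle here: this is a packaging lemma whose only real content is the trivial $\ell^2$-to-$\ell^\infty$ comparison on a finite sum, and that comparison is precisely what produces the factor $N^{\alpha/2}$ in the constant of the conclusion.
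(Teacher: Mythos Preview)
Your argument is correct and is precisely the standard proof: the paper itself does not spell it out but simply cites \cite[Lemma~4.1]{MR2134453}, whose proof is exactly the $\ell^2$-to-$\ell^\infty$ comparison you carry out in Steps~1--3. There is nothing to add.
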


\begin{proof}
For $(C,\alpha)$-good functions this can be found in \cite[Lemma~4.1]{MR2134453}. In the case of absolutely $(C,\alpha)$-good functions the proof is adapted as in Lemma~\ref{Calpha2}.
\end{proof}

The following statement uses the definition of non-degeneracy of maps introduced in \cite{Klein_Mar}, which is now recalled.
\begin{defn}[See {\cite[\S2]{MR2134453}}]\label{nondegen}
Given an open subset $U\subset\R^d$ and $l\in\N$, a map $\vv f=(f_1,\dots,f_N) : U \to \R^N$ is called {\em $\ell$-nondegenerate at $x_0\in U$} if there is an open neighborhood $V\subset U$ of $x_0$ such that $\vv f$ is $C^\ell$ on $V$ and
\begin{equation}\label{span2}
\Span_\R\left\{\frac{\partial^{k_1+\dots+k_d}\vv f}{\partial x_1^{k_1}\dots\partial x_d^{k_d}}(x_0) :1\le k_1+\dots+k_d\le l\right\}=\R^N\,.
\end{equation}
\end{defn}

\begin{prop}[Proposition 7.3 in \cite{MR2134453}]\label{prop614}
Let $U\subset\R^d$ be open and let $\vv f:U\to\R^N$ be a $C^{\ell+1}$ map which is $\ell$-nondegenerate at $x_0\in U$. Let
$\mu$ be a measure which is $D$-Federer and absolutely $(C,\alpha)$-decaying on $U$ for some $C,D,\alpha>0$. Then, there exists an open neighborhood $V\subset U$ of $x_0$ and constant $\tilde C>0$ such that any
$$
f\in \cS_{\vv f}:=\left\{c_0+c_1f_1+\dots+c_Nf_N:\vv c=(c_0,\dots,c_N)\in\R^{N+1}, ~\|\vv c\|=1\right\}
$$
is absolutely $\left(\tilde C,\frac{\alpha}{2^{\ell+1}-2}\right)$-good on $V$ w.r.t. $\mu$.
\end{prop}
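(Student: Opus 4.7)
My plan is to prove the proposition by induction on the nondegeneracy order $\ell$, after a compactness reduction that yields uniform constants over the parametrisation of $\cS_{\vv f}$.

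\textbf{Compactness reduction.} The family $\cS_{\vv f}$ is continuously parametrised by the compact sphere $S^N\subset\R^{N+1}$ via $\vv c\mapsto f_{\vv c}:=c_0+c_1f_1+\cdots+c_Nf_N$. I would first prove that, for each fixed $\vv c_0\in S^N$, there exist open neighbourhoods $\cN_{\vv c_0}\subset S^N$ of $\vv c_0$ and $V_{\vv c_0}\subset U$ of $x_0$, together with a constant $\tilde C_{\vv c_0}$, such that every $f_{\vv c}$ with $\vv c\in\cN_{\vv c_0}$ is absolutely $\bigl(\tilde C_{\vv c_0},\,\alpha/(2^{\ell+1}-2)\bigr)$-good on $V_{\vv c_0}$ w.r.t.\ $\mu$. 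Extracting a finite subcover $\{\cN_{\vv c_0^{(j)}}\}$ of $S^N$ then gives the proposition with $\tilde C:=\max_j\tilde C_{\vv c_0^{(j)}}$ on the intersection $V:=\bigcap_j V_{\vv c_0^{(j)}}$.

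\textbf{Base case $\ell=1$.} Here the vectors $\partial_i\vv f(x_0)$ span $\R^N$. Either $\vv c_0=(\pm 1,\vv 0)$ and $f_{\vv c_0}$ is constant (trivial), or $\vv c_0'=(c_0^{(1)},\dots,c_0^{(N)})\neq\vv 0$ and the spanning condition forces $|\vv c_0'\cdot\partial_{i_0}\vv f(x_0)|\ge\kappa>0$ for some $i_0$. Shrinking $\cN_{\vv c_0}$ and $V_{\vv c_0}$ by continuity, I may assume $|\partial_{i_0}f_{\vv c}(x)|\ge\kappa/2$ throughout. The sub-level set $\{x\in B:|f_{\vv c}(x)|<\varepsilon\}$ is then contained in a tube of width $O(\varepsilon/\kappa)$ around the smooth level hypersurface $\{f_{\vv c}=0\}$, which on a sufficiently small ball is well approximated by a hyperplane. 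The absolute $(C,\alpha)$-decaying hypothesis, applied to that approximating hyperplane, bounds the $\mu$-measure of this tube; combined with $\|f_{\vv c}\|_B\gtrsim\kappa\, r_B$ (obtained by a mean-value argument on any chord of $B$ parallel to $e_{i_0}$), this yields the desired absolute goodness.

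\textbf{Inductive step.} Assuming the statement for order $\ell-1$ with exponent $\alpha_{\ell-1}:=\alpha/(2^\ell-2)$, consider the extended map $\bar{\vv f}:=(\vv f,\partial_1\vv f,\dots,\partial_d\vv f):U\to\R^{N(d+1)}$; its derivatives of order $\le\ell-1$ contain every derivative of $\vv f$ of order from $1$ up to $\ell$, so $\bar{\vv f}$ is $(\ell-1)$-nondegenerate at $x_0$. Each $\partial_j f_{\vv c}$ belongs (up to trivial normalisation) to $\cS_{\bar{\vv f}}$, so the inductive hypothesis, combined with Lemma~\ref{Calpha1}, renders $\|\nabla f_{\vv c}\|$ absolutely $(\cdot,\alpha_{\ell-1})$-good on a common neighbourhood, with constants uniform in $\vv c$ by the same compactness reduction. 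I would then split $B=A_\delta\cup(B\setminus A_\delta)$ with $A_\delta:=\{x\in B:\|\nabla f_{\vv c}(x)\|<\delta\}$: on $B\setminus A_\delta$ the base-case tube argument gives
\[
\mu\bigl(\{|f_{\vv c}|<\varepsilon\}\cap(B\setminus A_\delta)\bigr)\le C\,(\varepsilon/(\delta r_B))^{\alpha}\mu(B),
\]
while the $\alpha_{\ell-1}$-goodness of $\|\nabla f_{\vv c}\|$ gives $\mu(A_\delta)\le\tilde C'(\delta/\|\nabla f_{\vv c}\|_B)^{\alpha_{\ell-1}}\mu(B)$. Optimising in $\delta$ and comparing $\|\nabla f_{\vv c}\|_B$ with $\|f_{\vv c}\|_B/r_B$ produces an exponent $\alpha_\ell$ with $1/\alpha_\ell=2/\alpha_{\ell-1}+2/\alpha$, whose solution is exactly $\alpha/(2^{\ell+1}-2)$, closing the induction.

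\textbf{Main obstacle.} The central difficulty is bookkeeping in the inductive step: organising the extended family $\bar{\vv f}$ so that its $\cS$-class absorbs the gradient of every $f_{\vv c}$ uniformly, and comparing the normaliser $\|\nabla f_{\vv c}\|_B$ appearing in the inductive bound with the desired denominator $\|f_{\vv c}\|_B$ (up to a factor of $r_B$) so that the final estimate takes the clean absolute-good form in $\varepsilon/\|f_{\vv c}\|_B$. The use of \emph{absolute} decay (and absolute goodness) is decisive here, since the suprema must be taken over the full ball rather than over $\supp\mu\cap B$; this prevents any simple reduction to the $\mu$-essential analogue and is the source of the stronger hypothesis in the proposition.
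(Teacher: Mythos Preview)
The paper does not prove this proposition itself; it is quoted verbatim as Proposition~7.3 of Kleinbock--Lindenstrauss--Weiss~\cite{MR2134453}. Your inductive scheme --- compactness over $S^N$, feed $\nabla f_{\vv c}$ back into the hypothesis at order~$\ell-1$, then split $B$ according to the size of $\|\nabla f_{\vv c}\|$ --- is indeed the architecture of the argument in~\cite{MR2134453}, so the outline is on target.

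There is, however, a genuine gap at what you call the ``base-case tube argument''. The bound
\[
\mu\bigl(\{|f_{\vv c}|<\varepsilon\}\cap(B\setminus A_\delta)\bigr)\ \le\ C\,\Bigl(\frac{\varepsilon}{\delta\, r_B}\Bigr)^{\alpha}\mu(B)
\]
does not follow from absolute decay as written, because that hypothesis controls only neighbourhoods of \emph{affine} hyperplanes, whereas the level set $\{f_{\vv c}=0\}$ is in general curved: on a ball of radius $r$ it deviates from any tangent hyperplane by $O(r^{2})$, so the sublevel set lies only in a slab of thickness $O(\varepsilon/\delta+r^{2})$, and the residual $O(r)$ in the ratio does not vanish as $\varepsilon\to0$. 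A symptom of this gap is an internal inconsistency in your sketch: optimising your two displayed bounds in~$\delta$ and then using $\|\nabla f_{\vv c}\|_B\gtrsim\|f_{\vv c}\|_B/r_B$ actually yields the recursion $1/\alpha_\ell=1/\alpha_{\ell-1}+1/\alpha$ (so $\alpha_\ell=\alpha/\ell$), not the claimed $1/\alpha_\ell=2/\alpha_{\ell-1}+2/\alpha$. Since only the latter solves to $\alpha/(2^{\ell+1}-2)$, your stated bounds and your stated recursion cannot both be right.

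What fills this gap in~\cite{MR2134453} is a Besicovitch covering at an adaptive scale together with the Federer hypothesis: for each $x\in\{|f_{\vv c}|<\varepsilon\}\cap\supp\mu$ one selects a ball $B_x$ whose radius is governed by the local size of $f_{\vv c}$, so that on $B_x$ the first-order Taylor approximation is accurate enough for absolute decay to apply; one then passes to a Besicovitch subcover and uses the doubling property to control $\sum\mu(B_x)$. This localisation step --- absent from your sketch --- is where the genuine loss of exponent occurs and is what produces the stated $\alpha/(2^{\ell+1}-2)$.
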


\begin{cor}\label{prop614+}
Let $U\subset\R^d$ be open and let $g_1,\dots,g_M:U\to\R$ be real analytic functions. Let
$\mu$ be a measure which is absolutely decaying and $D$-Federer on $U$ for some $D>0$. Then, for every $x_0\in U$ there exists an open neighborhood $V\subset U$ of $x_0$ and constants $\tilde C,\alpha_0>0$ such that any linear combination of
$g_1,\dots,g_M$ with real coefficients is absolutely $\left(\tilde C,\alpha_0\right)$-good on $V$ w.r.t. $\mu$.
\end{cor}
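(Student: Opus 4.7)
The plan is to reduce Corollary~\ref{prop614+} to Proposition~\ref{prop614} by building a well-adapted basis of the finite-dimensional span of $1, g_1, \dots, g_M$, establishing its $\ell_0$-nondegeneracy at $x_0$, and then rescaling. Concretely, let $W = \Span_\R(1, g_1, \dots, g_M)$ inside the space of analytic functions on $U$ and extend $\{1\}$ to a basis $1, f_1, \dots, f_N$ of $W$, with each $f_j$ an $\R$-linear combination of the $g_i$'s. The point of including $1$ in the basis is the following key observation: for any nonzero $\vv c = (c_1, \dots, c_N) \in \R^N$, the function $c_1 f_1 + \cdots + c_N f_N$ cannot agree with any constant on any neighbourhood of $x_0$, for otherwise a nontrivial $\R$-linear dependence would hold among $1, f_1, \dots, f_N$.

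Next, I would show that $\vv f = (f_1, \dots, f_N) : U \to \R^N$ is $\ell_0$-nondegenerate at $x_0$ in the sense of Definition~\ref{nondegen} for some $\ell_0 \in \N$. For each $\ell \ge 1$, set
\begin{equation*}
V_\ell = \bigl\{\vv c \in \R^N : \partial^{\vv k}(c_1 f_1 + \cdots + c_N f_N)(x_0) = 0 \text{ for all } \vv k \text{ with } 1 \le |\vv k| \le \ell\bigr\}\,.
\end{equation*}
These are linear subspaces of $\R^N$ with $V_1 \supseteq V_2 \supseteq \cdots$, so by finite-dimensionality the chain stabilises at some $V_{\ell_0}$. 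For any $\vv c \in V_{\ell_0} = \bigcap_\ell V_\ell$, every positive-order partial derivative of the analytic function $c_1 f_1 + \cdots + c_N f_N$ vanishes at $x_0$, so by the identity principle this function is constant on a neighbourhood of $x_0$; the observation of the previous paragraph then forces $\vv c = \vv 0$. Hence $V_{\ell_0} = \{\vv 0\}$, which is exactly $\ell_0$-nondegeneracy of $\vv f$ at $x_0$.

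With nondegeneracy in hand, Proposition~\ref{prop614} produces an open neighbourhood $V \subset U$ of $x_0$ and a constant $\tilde C_0 > 0$ such that every $c_0 + c_1 f_1 + \cdots + c_N f_N$ with $\|(c_0, \dots, c_N)\| = 1$ is absolutely $(\tilde C_0, \alpha_0)$-good on $V$ w.r.t.\ $\mu$, where $\alpha_0 = \alpha/(2^{\ell_0+1}-2)$. Any linear combination $\sum_i c_i g_i$ lies in $W$ and so equals $a_0 + a_1 f_1 + \cdots + a_N f_N$ for some $(a_0, \dots, a_N) \in \R^{N+1}$. If this coefficient vector is zero the function is identically $0$ and trivially good; otherwise one factors out $\lambda = \|(a_0, \dots, a_N)\| > 0$ and invokes Lemma~\ref{Calpha2} to conclude that $\sum_i c_i g_i$ is absolutely $(\tilde C, \alpha_0)$-good on $V$ with $\tilde C = \max\{\tilde C_0, 1\}$, uniformly in the coefficients.

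The only non-routine step is the nondegeneracy argument, and its essential ingredient is real-analyticity: for merely $C^\infty$ data the descending chain $(V_\ell)$ could stabilise at a nonzero subspace, since a non-constant smooth function can have all its derivatives vanish at $x_0$, and the strategy would fail. Everything else is formal packaging of Proposition~\ref{prop614}, and the homogeneity step is automatic from Lemma~\ref{Calpha2}.
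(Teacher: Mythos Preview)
Your proof is correct and follows essentially the same approach as the paper: pass to a basis $1,f_1,\dots,f_N$ of the span of $1,g_1,\dots,g_M$, use analyticity to establish $\ell$-nondegeneracy of $\vv f=(f_1,\dots,f_N)$ at $x_0$, apply Proposition~\ref{prop614}, and conclude via Lemma~\ref{Calpha2}. The only cosmetic differences are that the paper picks the $f_j$ as a maximal sub-collection of the $g_i$ (rather than arbitrary linear combinations) and phrases the nondegeneracy argument in terms of the span of partial derivatives rather than its annihilator $V_\ell$; your dual formulation is equivalent and arguably cleaner.
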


\begin{proof}
Fix any $x_0\in U$. Since $g_1,\dots,g_M$ are analytic, there exists a neighborhood $U'\subset U$ of $x_0$ such that each of these functions can be expanded into a Taylor series at $x_0$ for all $x\in U'$. Let $f_1,\dots,f_N$ be any maximal sub-collection of $g_1,\dots,g_M$ such that
\begin{equation}\label{vb27}
1,f_1,\dots,f_N
\end{equation}
are linearly independent over $\R$ as functions of $x\in U'$. Then the map $\vv f=(f_1,\dots,f_N)$ is $\ell$-non-degenerate at $x_0$ for some $\ell\in\N$. Indeed, if this was not the case, then the span on the left of \eqref{span2} would be a proper linear subspace, say $L$, of $\R^N$. By the Taylor expansion of $\vv f$, we would then have that $\vv f(x)\in \vv f(x_0)+L$ for all $x\in U'$, contrary to the linear independence of \eqref{vb27}.
Since $\vv f$ is $l$-non-degenerate at $x_0$ and $\mu$ is absolutely decaying, by Proposition~\ref{prop614}, there exists a neighborhood $V\subset U'$ of $x_0$ and some positive constants $\tilde C$ and $\alpha_0$ such that every function in $\cS_{\vv f}$ is absolutely $\left(\tilde C,\alpha_0\right)$-good on $V$ w.r.t. $\mu$.
By the maximality of the sub-collection \eqref{vb27}, any linear combination of
$g_1,\dots,g_M$ with real coefficients is a constant multiple of an element from $\cS_{\vv f}$. Therefore, by Lemma~\ref{Calpha2}, any such linear combination is also absolutely $\left(\tilde C,\alpha_0\right)$-good on $V$ w.r.t. $\mu$. The proof is thus complete.
\end{proof}

\begin{proof}[Proof of Theorem~\ref{KleinbockQnD3}]
Let $g_1,\dots,g_M:U\to\R$ be the collection of all minors of $\HH_0$. Since $\HH_0$ is analytic, $g_1,\dots,g_M$ are analytic. Take any $x_0\in \supp\mu\cap U$. Then, by Corollary~\ref{prop614+}, there exists an open neighborhood $V\subset U$ of $x_0$ and constants $\tilde C,\alpha_0>0$ such that any linear combination of $g_1,\dots,g_M$ over $\R$ is absolutely $\left(\tilde C,\alpha_0\right)$-good on $V$ w.r.t. $\mu$. Let $B(x_0)$ be a ball centred at $x_0$ such that $3^{n+2}B(x_0)\subset V$ and let $C_0=C''(n,d,C,D,\alpha_0)$, where $C''$ is as in Theorem~\ref{KleinbockQnD2} and $C=2^{(n+1)\alpha_0/2}\tilde C$. We claim that $B(x_0)$, $C_0$ and $\alpha_0$ satisfy Theorem~\ref{KleinbockQnD3}.

To prove this claim take any ball $B\subset B(x_0)$ and any $0<\rho\le1$. If $\mu(B)=0$ then \eqref{QnD3} is trivially true and we are done. Otherwise, there exists $x'\in\supp\mu\cap B$. Let $B'$ be the ball of radius twice the radius of $B$ centred at $x'$. Then, $B\subset B'\subset 3B$ and also $3^{n+1}B'\subset 3^{n+2}B\subset 3^{n+2}B(x_0)\subset V$.

Take a primitive collection $\vv v_1,\dots,\vv v_r\in\Z^{n+1}$.
Then, since $\HH(x)=g\HH_0(x)$ and $g$ is constant, the coordinate functions of $\HH(x)\vv v_1\wedge\cdots\wedge \HH(x)\vv v_r$ are linear combinations of $g_1,\dots,g_M$ and thus, by what we have shown above, they are absolutely $\left(\tilde C,\alpha_0\right)$-good on $\tilde B'=3^{n+1}B'$ w.r.t. $\mu$. By Lemma~\ref{Calpha1}, $x\mapsto \|\HH(x)\vv v_1\wedge\cdots\wedge \HH(x)\vv v_r\|$ is absolutely $\left(2^{(n+1)\alpha_0/2}\tilde C,\alpha_0\right)$-good on $\tilde B'$ w.r.t. $\mu$. This verifies condition (i*) of Theorem~\ref{KleinbockQnD2} with $B'$ in place of $B$.

Further, we can assume that $\sup_{x\in B'}\|\HH(x)\vv v_1\wedge\cdots\wedge \HH(x)\vv v_r\|\ge \rho$ as otherwise, since $B\subset B'$, conclusion (2) of Theorem~\ref{KleinbockQnD3} holds and we are done. This verifies condition (ii*) of Theorem~\ref{KleinbockQnD2} with $B'$ in place of $B$.
Hence, by Theorem~\ref{KleinbockQnD2}, we get that for any $\varepsilon>0$
\begin{equation}\label{QnD2+}
\mu\left(\left\{x\in B':\HH(x)\Z^{n+1}\not\in K_{\epsilon} \right\}\right)\le C''\left(\frac{\varepsilon}{\rho}\right)^{\alpha_0}\mu(B')\,.
\end{equation}
Since $B\subset B'\subset 3B$, \eqref{QnD2+} implies \eqref{QnD3} and completes the proof.
\end{proof}

\subsection{Proof of the local estimate}
Now the goal is to prove Theorem~\ref{thm:QnD-fractal-local}.
We will need the following two well known statements, which we prove for completeness.

\begin{lemma}\label{lem613}
Suppose that $\mu$ is a $(C,\alpha)$-Ahlfors regular measure on $\R^d$.
Then $\mu$ is $D$-Federer for any $D>C^23^\alpha$.
\end{lemma}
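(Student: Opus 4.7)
The proof is essentially a one-line computation, so my plan is very short.

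I would start from the $(C,\alpha)$-Ahlfors regularity hypothesis \eqref{Ahlfors}: for any $x\in\supp\mu$ and any radius $r>0$ with $r\le\rho_0$ one has $C^{-1}r^\alpha\le \mu(B(x,r))\le Cr^\alpha$. The Federer condition at scale $r$ (centred at $x\in\supp\mu$) compares $\mu(B(x,3r))$ with $\mu(B(x,r))$, so the natural move is to apply the upper Ahlfors bound at scale $3r$ and the lower Ahlfors bound at scale $r$, then take their ratio.

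Concretely, provided $3r\le\rho_0$ (which is the only subtle point — see below), one has
\[
\mu(B(x,3r))\le C(3r)^\alpha=3^\alpha C\,r^\alpha\qquad\text{and}\qquad\mu(B(x,r))\ge C^{-1}r^\alpha,
\]
so dividing gives $\mu(B(x,3r))\le C^2 3^\alpha\,\mu(B(x,r))$, and for any $D>C^2 3^\alpha$ this is a strict inequality, which is what the Federer property demands.

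The only thing that could be called an obstacle is that Ahlfors regularity is stated only for $\rho\le\rho_0$, whereas the Federer condition as formulated in Definition~\ref{def_Fed_fun} quantifies over all $r>0$ with $B(x,3r)\subset U$. This is handled automatically because the Federer property is stated relative to an open set $U$: once we restrict attention to $U$ of sufficiently small diameter (specifically, $\mathrm{diam}(U)\le 2\rho_0/3$ forces $3r\le\rho_0$ whenever $B(x,3r)\subset U$), the computation above applies and yields the claim. Strictly speaking the lemma as stated should be read as ``$\mu$ is $D$-Federer on any sufficiently small open set'', and the proof above supplies exactly that.
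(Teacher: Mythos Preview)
Your proof is correct and follows exactly the same route as the paper: apply the upper Ahlfors bound at radius $3r$, the lower bound at radius $r$, and divide. Your remark about the $\rho_0$ constraint is actually more careful than the paper's own proof, which silently ignores it.
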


\begin{proof}
Let $x\in\supp\mu$ and $r>0$. Then, by \eqref{Ahlfors},
$\mu(B(x,3r))\le C(3r)^\alpha$ and $\mu(B(x,r))\ge C^{-1}r^\alpha$. Consequently,
$\mu(B(x,3r))/\mu(B(x,r))\le C^23^\alpha$ and thus $\mu$ is $D$-Federer for any $D>C^23^\alpha$.
\end{proof}

\begin{lemma}\label{lem613B}
Suppose that $\mu$ is a $(C,\alpha)$-Ahlfors regular measure on $\R$.
Then $\mu$ is absolutely $\left(2^\alpha C^2,\alpha\right)$-decaying on $\R$.
\end{lemma}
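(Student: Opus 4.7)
The plan is to unravel the definition in dimension one and then invoke both sides of Ahlfors regularity at the appropriate scales. Since we are in $\R$, a hyperplane $\cH$ is simply a single point $y \in \R$, and the $\eps$-neighbourhood $B(\cH,\eps)$ is the open interval $(y-\eps,y+\eps)$. Thus, given any ball $B = B(x,r_B)$ with $x \in \supp\mu$, the goal reduces to proving
\[
\mu\bigl(B \cap (y-\eps,y+\eps)\bigr) \le 2^\alpha C^2 \left(\frac{\eps}{r_B}\right)^{\!\alpha} \mu(B)
\]
for all $y \in \R$ and $\eps>0$.

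I would split into two cases. First, if $\eps \ge r_B$, then $(\eps/r_B)^\alpha \ge 1$, so the inequality follows trivially from $\mu(B\cap(y-\eps,y+\eps)) \le \mu(B)$. Second, if $\eps < r_B$, I may assume $B \cap (y-\eps,y+\eps) \cap \supp\mu \neq \emptyset$, for otherwise the left-hand side vanishes. Pick any point $z$ in this intersection. Then $B \cap (y-\eps,y+\eps) \subset B(z,2\eps)$, and $z \in \supp\mu$, so the upper bound in \eqref{Ahlfors} gives
\[
\mu\bigl(B \cap (y-\eps,y+\eps)\bigr) \le \mu(B(z,2\eps)) \le C(2\eps)^\alpha = 2^\alpha C\,\eps^\alpha.
\]
Simultaneously, since $B$ is centred at $x \in \supp\mu$, the lower bound in \eqref{Ahlfors} yields $\mu(B) \ge C^{-1} r_B^\alpha$. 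Dividing one by the other produces the required constant $2^\alpha C^2$.

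There is essentially no obstacle here — the argument parallels the proof of Lemma~\ref{lem613} and is purely a matter of packaging the two-sided Ahlfors bounds. The only mild care point is the implicit assumption that the radii involved fall within the range $\rho \le \rho_0$ where \eqref{Ahlfors} holds; this is handled in the same cavalier fashion as in Lemma~\ref{lem613}, or alternatively one restricts attention to $r_B \le \rho_0$, which is all the decaying property is ever used for in the present paper.
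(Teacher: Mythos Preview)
Your proof is correct and follows essentially the same approach as the paper's: both bound $\mu(B\cap B(y,\varepsilon))$ using the upper Ahlfors inequality on a ball of radius $2\varepsilon$ centred at a support point, and bound $\mu(B)$ from below via the lower Ahlfors inequality. The paper handles all $\varepsilon$ at once (without your case split $\varepsilon\gtrless r_B$) by simply dropping the intersection with $B$ before applying the upper bound, but this is a cosmetic difference.
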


\begin{proof}
To begin with note that any hyperplane in $\R$ is just a singleton. Take any open ball $B$ of radius $r>0$ centred in $\supp\mu$, any $y\in\R$ and any $\varepsilon>0$. If there exists $y'\in B(y,\varepsilon)\cap\supp\mu$ then $B(y,\varepsilon)\subset B(y',2\varepsilon)$ and, since $\mu(B(y,\varepsilon))\le \mu(B(y',2\varepsilon))$, by \eqref{Ahlfors},
we have that
$
\mu(B(y,\varepsilon))\le C(2\varepsilon)^\alpha\,.
$
Clearly, this also holds if $y'$ does not exist, as in this case $\mu(B(y,\varepsilon))=0$. Again, by \eqref{Ahlfors},
$\mu(B)\ge C^{-1}r^\alpha$. Then,
$$
\mu(B\cap B(y,\varepsilon))\le \mu(B(y,\varepsilon))\le C(2\varepsilon)^\alpha \le C(2\varepsilon)^\alpha \frac{\mu(B)}{C^{-1}r^\alpha}=2^\alpha C^2\left(\frac{\varepsilon}{r}\right)^\alpha\mu(B)\,.
$$
Therefore $\mu$ is absolutely $\left(2^\alpha C^2,\alpha\right)$-decaying.
\end{proof}

\begin{proof}[Proof of Theorem~\ref{thm:QnD-fractal-local}]
Suppose that $\mu$, $\varphi$ and $I_0$ are the same as in {\rm\S\ref{prelim}} and let
\begin{equation}\label{vb10}
\HH_0(x)=z(x) u(\varphi(x))\qquad\text{and}\qquad\HH(x)=g_{\vv \tau}\HH_0(x)\,,
\end{equation}
where $z(x)$ is given by \eqref{eq:z(x)}, $u(\varphi(x))$ is given by \eqref{u()} and $g_{\tau}$ is given by \eqref{gtau}.
Observe that
\begin{equation*}%\label{vb285}
W(\tau,J,\delta)= \left\{ x \in J: \lat{h(x)}\not\in K_{\delta} \right\}\,.
\end{equation*}
By Lemmas~\ref{lem613} and \ref{lem613B}, $\mu$ is absolutely decaying and $D$-Federer with $D>C^23^\alpha$. Since $\varphi$ is analytic, $h_0$ is analytic. Then, Theorem~\ref{thm:QnD-fractal-local} becomes a special case of Theorem~\ref{KleinbockQnD3}. The proof is thus complete.
\end{proof}

\subsection{Proof of the global estimate}
The goal now is to prove Theorem~\ref{thm:QnD-fractal-global}. We begin with several auxiliary lemmas.

\begin{lemma}[Laplace identity, {\cite[p.\,105]{MR568710}}]\label{Laplace}
For any $1\le r\le n+1$ and any vectors $\vv u_1,\dots,\vv u_r,\vv v_1,\dots,\vv v_{r}\in\R^{n+1}$ we have that
$$
|(\vv u_1\we\cdots\we\vv u_r)\cdot(\vv v_1\we\cdots\we\vv v_r)|=\left|\det\left(\vv u_i\cdot\vv v_j\right)_{1\le i,j\le r}\right|\,.
$$
\end{lemma}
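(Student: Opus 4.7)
The plan is to deduce this from the Cauchy--Binet formula after expanding each multivector in the orthonormal basis $\{\vv e_I\}$ from \eqref{vb11}.

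First I would establish the minor-coordinate formula. Assembling the vectors into the $(n+1)\times r$ matrices $U = [\vv u_1\mid\cdots\mid \vv u_r]$ and $V = [\vv v_1\mid\cdots\mid \vv v_r]$, I claim that for each $r$-subset $I=\{i_1<\cdots<i_r\}\subset\{1,\dots,n+1\}$, the coordinate $(\vv u_1\we\cdots\we\vv u_r)_I$ equals the minor $\det(U_I)$ formed from the rows of $U$ indexed by $I$, and likewise for $V$. This is a routine consequence of multilinearity and antisymmetry of $\we$ together with the Leibniz formula for determinants: expanding each $\vv u_j=\sum_i U_{ij}\vv e_i$ and distributing, the terms in which two indices coincide vanish, and the surviving ones group by their underlying subset $I$ with the sign of the permutation that sorts them, yielding precisely $\det(U_I)\,\vv e_I$.

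Since \eqref{vb11} is an orthonormal basis of $\bigwedge^r(\R^{n+1})$, the left-hand side of the identity expands as
\[
(\vv u_1\we\cdots\we\vv u_r)\cdot(\vv v_1\we\cdots\we\vv v_r) \;=\; \sum_{|I|=r}\det(U_I)\det(V_I).
\]
The Cauchy--Binet formula then identifies this sum with $\det(U^\top V)$. Because the $(i,j)$-entry of $U^\top V$ is exactly $\vv u_i\cdot\vv v_j$, taking absolute values yields the claimed equality.

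The only substantive ingredient is Cauchy--Binet itself, which is entirely standard and can either be invoked directly or proved by expanding $\det(U^\top V)$ via the Leibniz formula, writing each index $k\in\{1,\dots,r\}$ as a pair $(\sigma(k), i_{\sigma(k)})$ factoring through a nondecreasing map into $\{1,\dots,n+1\}$, and grouping contributions by their image $I$. No genuine obstacle is expected; the proof is bookkeeping about indices and signs.
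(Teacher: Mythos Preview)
Your argument is correct: the coordinate--minor identification $(\vv u_1\we\cdots\we\vv u_r)_I=\det(U_I)$ together with the Cauchy--Binet formula yields $\sum_{|I|=r}\det(U_I)\det(V_I)=\det(U^\top V)=\det(\vv u_i\cdot\vv v_j)_{1\le i,j\le r}$, and taking absolute values gives the stated identity.

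As for comparison, the paper gives no proof of this lemma; it merely cites it as the classical Laplace identity from Schmidt's book. So your proposal supplies a complete (and standard) proof where the paper has none, and there is nothing further to compare.
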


\begin{lemma}\label{bot}
Let $1\le r\le n$, vectors $\vv v_1,\dots,\vv v_r\in\R^{n+1}$ and $\vv u_1,\dots,\vv u_{n+1-r}\in\R^{n+1}$ be linearly independent and satisfy the following two conditions\/{\rm:}
\begin{equation}\label{vb17}
\text{$\vv v_i\cdot\vv u_j=0$\quad $(1\le i\le r$, $1\le j\le n+1-r)$}
\end{equation}
and
\begin{equation}\label{vb18}
\|\vv v_1\we\cdots\we\vv v_{r}\|=\|\vv u_1\we\cdots\we\vv u_{n+1-r}\|\,.
\end{equation}
Then for any $\vv w_1,\dots,\vv w_r\in\R^{n+1}$
\begin{equation}\label{vb22}
|(\vv w_1\we\cdots\we\vv w_r)\cdot(\vv v_1\we\cdots\we\vv v_r)|=\|\vv w_1\we\cdots\we\vv w_r\we\vv u_1\we\cdots\we\vv u_{n+1-r}\|\,.
\end{equation}
\end{lemma}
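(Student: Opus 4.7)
The strategy is to recognise \eqref{vb22} as a Hodge-duality identity relating the $r$-plane $V := \Span_\R(\vv v_1, \dots, \vv v_r)$ and its orthogonal complement $U := \Span_\R(\vv u_1, \dots, \vv u_{n+1-r})$. The orthogonality hypothesis \eqref{vb17}, the given linear independence, and the dimension count $r + (n+1-r) = n+1$ together force $\dim V = r$, $\dim U = n+1-r$, and $U = V^\perp$; in particular $V \oplus U = \R^{n+1}$.

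The plan is as follows. Set $\omega := \vv e_1 \wedge \cdots \wedge \vv e_{n+1}$ and introduce the unit multivectors
$$
\vv v^* := \frac{\vv v_1 \wedge \cdots \wedge \vv v_r}{\|\vv v_1 \wedge \cdots \wedge \vv v_r\|}, \qquad \vv u^* := \frac{\vv u_1 \wedge \cdots \wedge \vv u_{n+1-r}}{\|\vv u_1 \wedge \cdots \wedge \vv u_{n+1-r}\|}.
$$
Applying Lemma~\ref{Laplace} to the Gramian of $\vv v_1,\dots,\vv v_r,\vv u_1,\dots,\vv u_{n+1-r}$, which is block diagonal by \eqref{vb17}, shows that $\|\vv v_1 \wedge \cdots \wedge \vv v_r \wedge \vv u_1 \wedge \cdots \wedge \vv u_{n+1-r}\|$ factors as the product of the two block norms. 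Consequently $\vv v^* \wedge \vv u^*$ is a unit $(n+1)$-vector in $\R^{n+1}$, and therefore
$$
\vv v^* \wedge \vv u^* = \sigma\, \omega \qquad \text{for some } \sigma \in \{\pm 1\}.
$$

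The key step is the identity
$$
\|\alpha \wedge \vv u^*\| = |\alpha \cdot \vv v^*| \qquad \text{for every } \alpha \in \textstyle\bigwedge^r(\R^{n+1}).
$$
To prove it I would pick an orthonormal basis $\vv f_1,\dots,\vv f_{n+1}$ of $\R^{n+1}$ adapted to $V \oplus U$, so that $\vv v^* = \pm\, \vv f_1 \wedge \cdots \wedge \vv f_r$ and $\vv u^* = \pm\, \vv f_{r+1} \wedge \cdots \wedge \vv f_{n+1}$. Expanding $\alpha = \sum_{|I|=r}\alpha_I \vv f_I$ in the induced orthonormal basis of $\bigwedge^r(\R^{n+1})$, every term with $I \neq \{1,\dots,r\}$ has an index in $\{r+1,\dots,n+1\}$ and so satisfies $\vv f_I \wedge \vv u^* = 0$ by repetition. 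The only surviving contribution comes from the $\vv f_{\{1,\dots,r\}}$ term, yielding $\alpha \wedge \vv u^* = (\alpha \cdot \vv v^*)\,\sigma\,\omega$; taking norms gives the identity.

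Finally, I substitute $\alpha = \vv w_1 \wedge \cdots \wedge \vv w_r$ and clear the denominators defining $\vv v^*$ and $\vv u^*$; the resulting factors $\|\vv v_1 \wedge \cdots \wedge \vv v_r\|$ and $\|\vv u_1 \wedge \cdots \wedge \vv u_{n+1-r}\|$ are equal by hypothesis \eqref{vb18} and cancel, which is exactly \eqref{vb22}. I do not expect any serious obstacle; the bookkeeping amounts to tracking the sign $\sigma$ and verifying that in the adapted orthonormal basis $\vv v^*$ and $\vv u^*$ really do become $\pm$-basis vectors of their respective exterior powers, which is immediate from $\vv v^* \in \bigwedge^r V$ and $\vv u^* \in \bigwedge^{n+1-r} U$ being unit.
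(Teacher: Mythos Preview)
Your proposal is correct and takes essentially the same approach as the paper: both identify \eqref{vb22} as the Hodge-duality relation arising from the fact that \eqref{vb17} and \eqref{vb18} force $\vv v_1\wedge\cdots\wedge\vv v_r$ to be $\pm$ the Hodge dual of $\vv u_1\wedge\cdots\wedge\vv u_{n+1-r}$. The only difference is that the paper simply cites this identity from the literature, whereas you give a short self-contained argument via an adapted orthonormal basis; your version is therefore more explicit but not conceptually different.
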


\begin{proof}
Equation \eqref{vb22} is well known. For example, it is a partial case of Equation (3.10) in \cite{MR2874641}, bearing in mind that conditions \eqref{vb17} and \eqref{vb18} mean that $\vv v_1\we\cdots\we\vv v_r$ is $\pm$ the Hodge dual to $\vv u_1\we\cdots\we\vv u_{n+1-r}$. The latter can be seen using Lemmas~3.1 and 3.2 in \cite{MR2874641} and the well known fact that the Hodge operator is an isometry, see \cite[p.201]{MR2874641}.
\end{proof}

\begin{lemma}[Lemma~5G in \cite{MR1176315}]\label{hodge}
Let $1\le r\le n$ and $\vv v_1,\dots,\vv v_r\in\Z^{n+1}$ be a primitive collection. Then there exists a primitive collection $\vv u_1,\dots,\vv u_{n+1-r}\in\Z^{n+1}$ satisfying \eqref{vb17} and \eqref{vb18}.
\end{lemma}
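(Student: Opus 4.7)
The plan is to construct $\vv u_1,\ldots,\vv u_{n+1-r}$ explicitly as a $\Z$-basis of the lattice $\Lambda := \Z^{n+1}\cap V^\perp$, where $V := \Span_\R(\vv v_1,\ldots,\vv v_r)$. Since $V$ is a rational subspace (being spanned by integer vectors), so is $W := V^\perp$, and elementary linear algebra implies that $\Lambda$ is a free $\Z$-module of rank $\dim W = n+1-r$. For any $\Z$-basis $\vv u_1,\ldots,\vv u_{n+1-r}$ of $\Lambda$, the primitivity of the collection is immediate from the definition of $\Lambda$, and the orthogonality relations \eqref{vb17} hold trivially since every $\vv u_j$ lies in $W$.

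The nontrivial content is the norm identity \eqref{vb18}. My approach is to compare the two wedges via the Hodge star operator $\star$ on $\bigwedge^{\bullet}(\R^{n+1})$. Since $\vv v_1\we\cdots\we\vv v_r$ is a decomposable multivector whose span is $V$ and $\vv u_1\we\cdots\we\vv u_{n+1-r}$ is a decomposable multivector whose span is $W=V^\perp$, they must be related by
\[
\vv u_1\we\cdots\we\vv u_{n+1-r}=\lambda\,\star\!\bigl(\vv v_1\we\cdots\we\vv v_r\bigr)
\]
for some nonzero $\lambda\in\R$. Because $\star$ is an isometry on $\bigwedge^{\bullet}(\R^{n+1})$, \eqref{vb18} reduces to showing $|\lambda|=1$.

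To establish $|\lambda|=1$ I would invoke the classical characterisation: a collection $\vv w_1,\ldots,\vv w_s\in\Z^{n+1}$ is primitive if and only if the Plücker coordinates of $\vv w_1\we\cdots\we\vv w_s$ (that is, the $s\times s$ minors of the matrix $[\vv w_1|\cdots|\vv w_s]$) are coprime integers. Since both $\vv v_1,\ldots,\vv v_r$ and $\vv u_1,\ldots,\vv u_{n+1-r}$ are primitive, both of the above wedges have integer Plücker coordinates whose greatest common divisor equals $1$. However $\star$ sends the basis multivector $\vv e_I$ to $\pm\vv e_{I^c}$, so comparing coefficients in the displayed identity shows that $\lambda$ times a coprime integer tuple equals, up to signs, another coprime integer tuple. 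A short gcd argument then forces $\lambda$ to be a rational number whose numerator and denominator both divide $1$, hence $|\lambda|=1$, which yields \eqref{vb18}.

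The main obstacle here is conceptual rather than technical: it lies in formulating precisely the duality between a primitive sublattice and its integer orthogonal complement. All remaining work — namely the Hodge-star bookkeeping, the identification of Plücker coordinates with minors, and the gcd comparison — is routine once the overall strategy is fixed.
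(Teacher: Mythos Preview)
The paper does not supply its own proof of this lemma; it merely records it as Lemma~5G of Schmidt's book and uses the result as a black box. Your argument is correct and is one of the standard proofs of this classical fact: take a $\Z$-basis of $\Z^{n+1}\cap V^\perp$, observe that the resulting $(n+1-r)$-vector and $\star(\vv v_1\we\cdots\we\vv v_r)$ span the same line in $\bigwedge^{n+1-r}(\R^{n+1})$, and then use that primitivity is equivalent to coprimality of Pl\"ucker coordinates to pin down the scalar as $\pm1$. The alternative classical route (closer to what appears in the cited reference) is to extend $\vv v_1,\dots,\vv v_r$ to a $\Z$-basis of $\Z^{n+1}$, pass to the dual basis, and read off both the orthogonality and the norm equality from the resulting determinant identities; your Hodge-star formulation is essentially a repackaging of the same idea.
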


\begin{prop}\label{mainprop}
Let $\varphi$ and $I_0$ be the same as in {\rm\S\ref{prelim}}, $\HH$ be given by \eqref{vb10} and $r\ge2$. Then for any primitive collection $\vv v_1,\dots,\vv v_r\in\Z^{n+1}$ there exists a multi-index $I=\{1,2,i_3,\dots,i_r\}\subset\{1,\dots,n+1\}$ and $\vv a\we\vv b\in\bigwedge^2\left(\Z^{n+1}\right)\setminus\{\vv0\}$ such that
$$
|(\HH(x)\vv v_1\wedge\cdots\wedge \HH(x)\vv v_r)_I|=e^{\sum_{i\in I}\tau_i}\left|\left(\tilde\varphi(x)\wedge\tilde\varphi'(x)\right)\cdot(\vv a\wedge\vv b)\right|\,,
$$
where
$\tilde\varphi(x)=(1,\varphi(x))=(1,x,\varphi_2(x),\dots,\varphi_n(x))$.
\end{prop}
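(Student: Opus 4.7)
The plan is to reduce the identity to an explicit coordinate computation combined with a simplicity argument in the exterior algebra. I would first compute the action of $\HH(x) := g_{\vv \tau} z(x)u(\varphi(x))$ on a generic vector $\vv v = (v_1,\dots,v_{n+1})^T \in \R^{n+1}$ directly from the definitions \eqref{u()}, \eqref{eq:z(x)} and \eqref{gtau}, verifying the identities
$$(\HH(x)\vv v)_1 = e^{\tau_1}\tilde\varphi(x)\cdot\vv v,\quad (\HH(x)\vv v)_2 = e^{\tau_2}\tilde\varphi'(x)\cdot\vv v,\quad (\HH(x)\vv v)_i = e^{\tau_i}v_i\ (i\geq 3).$$
In particular $\HH(x)$ has a very rigid row structure: up to diagonal rescaling its first two rows are $\tilde\varphi(x)^T$ and $\tilde\varphi'(x)^T$, while rows $3,\dots,n+1$ are standard basis rows.

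Next, fix $I = \{1,2,i_3,\dots,i_r\}$ and set $J := \{i_3,\dots,i_r\}$. Pulling out the $e^{\tau_i}$ factors, the $I$-coordinate of the wedge equals $e^{\sum_{i\in I}\tau_i}\det M$, where $M$ is the $r\times r$ matrix whose $j$th column is $(\tilde\varphi(x)\cdot\vv v_j,\,\tilde\varphi'(x)\cdot\vv v_j,\,v_{j,i_3},\dots, v_{j,i_r})^T$. Applying Cauchy-Binet to $M$ (equivalently, expanding along the sparse rows corresponding to $J$), the only non-vanishing column subsets are of the form $K = \{a,b\}\cup J$ with $\{a,b\}\cap J = \emptyset$ and $a<b$; each such minor reduces via an appropriate sign to the $2\times 2$ determinant from rows $1,2$ and columns $\{a,b\}$, which by Lemma~\ref{Laplace} equals $(\tilde\varphi(x)\we\tilde\varphi'(x))_{\{a,b\}}$. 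Packaging the result,
$$\det M = (\tilde\varphi(x)\we\tilde\varphi'(x))\cdot\Omega,\qquad \Omega_{\{a,b\}} = \pm P_{\{a,b\}\cup J}\text{ for }\{a,b\}\cap J=\emptyset,$$
where $P_K$ denotes the Pl\"ucker coordinates of $\vv v_1\we\cdots\we\vv v_r$; equivalently, $\Omega = \pm\iota_{\vv e_{i_3}\we\cdots\we\vv e_{i_r}}(\vv v_1\we\cdots\we\vv v_r) \in \bigwedge^2(\Z^{n+1})$.

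What remains is to choose $J$ so that $\Omega\neq\vv 0$, to prove $\Omega$ is decomposable, and to realise it as a wedge of two integer vectors. For the choice of $J$: linear independence of the $\vv v_j$'s gives some non-zero Pl\"ucker coordinate $P_K$, and the trivial bound $|K\cap\{1,2\}|\leq 2$ forces $|K\cap\{3,\dots,n+1\}|\geq r-2$, so any $J \subset K\cap\{3,\dots,n+1\}$ of size $r-2$ works, yielding $\Omega_{K\setminus J} = \pm P_K \neq 0$. Decomposability of $\Omega$ hinges on the following iterated observation: for a simple $k$-vector $\omega$ lying in $\bigwedge^k V$ with $\dim V = k$, the contraction $\iota_{\vv u}\omega$ is again in $\bigwedge^{k-1}V$, and every element of $\bigwedge^{k-1}V$ is automatically simple because $\dim\bigwedge^{k-1}V = k$ coincides with $\dim V$. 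Applying this $r-2$ times to the simple $r$-vector $\vv v_1\we\cdots\we\vv v_r$ in its $r$-dimensional span, each contraction produces a simple vector in a subspace one dimension smaller, and after $r-2$ steps we arrive at a simple 2-vector $\Omega$ in some subspace $V'$ with $\dim V' \leq 3$. Finally, simplicity together with integrality of $\Omega$ implies that the plane $\{\vv v : \vv v\we\Omega = \vv 0\}$ is rational, so $\Omega = k(\vv a_0\we\vv b_0)$ for some $k\in\Z$ and some $\Z$-basis $\vv a_0,\vv b_0$ of its intersection with $\Z^{n+1}$; setting $\vv a = k\vv a_0$, $\vv b = \vv b_0$ gives the desired $\vv a\we\vv b = \Omega$. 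The principal obstacle is establishing decomposability of $\Omega$; once the iterated contraction argument is in place, the rest is routine bookkeeping.
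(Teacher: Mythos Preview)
Your proof is correct and reaches the same identity, but it follows a route that differs in substance from the paper's argument. The paper works entirely through Hodge duality: it first invokes Lemma~\ref{hodge} to produce an orthogonal primitive collection $\vv u_1,\dots,\vv u_{n+1-r}$, uses Lemma~\ref{bot} to rewrite the $I$-coordinate as the norm of the $(n+1)$-fold wedge $\tilde\varphi\we\tilde\varphi'\we\vv e_{i_3}\we\cdots\we\vv e_{i_r}\we\vv u_1\we\cdots\we\vv u_{n+1-r}$, and then applies Lemma~\ref{hodge} \emph{again} to the $(n-1)$-vector $\vv e_{i_3}\we\cdots\we\vv u_{n+1-r}$ to extract the integer vectors $\vv a,\vv b$. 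You instead stay inside the Pl\"ucker coordinates of $\vv v_1\we\cdots\we\vv v_r$: your Cauchy--Binet computation identifies $\Omega$ directly with the interior product $\iota_{\vv e_J}(\vv v_1\we\cdots\we\vv v_r)$, and your decomposability step replaces the second call to Lemma~\ref{hodge} by the iterated contraction argument. Conceptually the two are dual to one another (your $\Omega$ is precisely $\pm *(\vv e_J\we *(\vv v_1\we\cdots\we\vv v_r))$, which is the paper's $\vv a\we\vv b$), but your presentation avoids introducing the auxiliary vectors $\vv u_j$ and makes the choice of $J$ more transparent. One small remark: your justification ``because $\dim\bigwedge^{k-1}V = k$ coincides with $\dim V$'' is not by itself a reason for simplicity; the actual fact you are using is that in a $k$-dimensional space every $(k-1)$-vector is simple (e.g.\ via the isomorphism $\bigwedge^{k-1}V\cong V^*$ induced by a volume form). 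With that clarified, the iterated contraction argument is sound and the rest is indeed routine.
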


\begin{proof}
Let $\vv v_1,\dots,\vv v_r\in\Z^{n+1}$ be a primitive collection, $r\ge2$. By Lemma~\ref{hodge}, choose a primitive collection $\vv u_1,\dots,\vv u_{n+1-r}\in\Z^{n+1}$ satisfying \eqref{vb17} and \eqref{vb18}. If $r=n+1$ this will be empty. For $i=1,\dots,n+1$ let $\vv h_i$ denote the $i$th row of $h$. Using the explicit form of $z(x)$ and $u(\varphi(x))$ given by \eqref{phi}, \eqref{u()} and \eqref{eq:z(x)}, we readily calculate that $\vv h_1=e^{\tau_1}\tilde\varphi(x)$, $\vv h_2=e^{\tau_2}\tilde\varphi'(x)$ and $\vv h_i=e^{\tau_i}\vv e_i$ for $i=3,\dots,n+1$.
Therefore, for every $I=\{1,2,i_3,\dots,i_r\}\subset\{1,\dots,n+1\}$ we have that
\begin{align}
\nonumber |\HH(x)&\vv v_1\wedge\cdots\wedge \HH(x)\vv v_r)_I|
=\left\lvert\det\left(\vv h_{i_\ell}\cdot\vv v_j\right)_{1\le \ell,j\le r}\right\rvert\\[1ex]
\nonumber &\stackrel{\text{Lemma~\ref{Laplace}}}{=}e^{\sum_{i\in I}\tau_i}\left\lvert\left(\tilde\varphi(x)\we\tilde\varphi'(x)\wedge\vv e_{i_3}\cdots\wedge \vv e_{i_r}\right)\cdot(\vv v_1\we\cdots\we\vv v_{r})\right\rvert\\[1ex]
&\stackrel{\text{Lemma~\ref{bot}}}{=}e^{\sum_{i\in I}\tau_i}\left\|\tilde\varphi(x)\we\tilde\varphi'(x)\wedge\vv e_{i_3}\cdots\wedge \vv e_{i_r}\wedge\vv u_1\we\cdots\we\vv u_{n+1-r}\right\|\,.\label{wedge1}
\end{align}
Choose $i_3,\dots,i_r\ge3$ so that
\begin{equation}\label{vb23}
\vv e_{i_3}\we\cdots\we\vv e_{i_r}\we\vv u_1\we\cdots\we\vv u_{n+1-r}\in\textstyle\bigwedge^{n-1}\left(\Z^{n+1}\right)\neq\vv0\,.
\end{equation}
By Lemma~\ref{hodge}, there exist vectors $\vv a,\vv b\in\Z^{n+1}$ orthogonal to every vector appearing in \eqref{vb23} such that $\|\vv a\we\vv b\|$ is equal to the norm of the multi-vector in \eqref{vb23}.
Then, using Lemma~\ref{bot}, we find that
$$
\left\|\tilde\varphi(x)\we\tilde\varphi'(x)\wedge\vv e_{i_3}\cdots\wedge \vv e_{i_r}\wedge\vv u_1\we\cdots\we\vv u_{n+1-r}\right\|=|(\tilde\varphi(x)\we\tilde\varphi'(x))\cdot(\vv a\we\vv b)|\,.
$$
This together with \eqref{wedge1} completes the proof.
\end{proof}

\begin{prop}\label{prop66}
Let $\varphi$ and $I_0$ be the same as in {\rm\S\ref{prelim}}, $\HH$ be given by \eqref{vb10}. Further suppose that \eqref{tau} and \eqref{tau_cond} hold. Then there exists $0<\rho_*\le1$ such that for any primitive collection $\vv v_1,\dots,\vv v_r\in\Z^{n+1}$
$$
\sup_{x\in I_0}\left\|\HH(x)\vv v_1\wedge\cdots\wedge \HH(x)\vv v_r\right\|\ge\rho_*.
$$
\end{prop}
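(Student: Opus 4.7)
The plan is to split by the rank $r$ of the primitive collection and reduce the main case to a uniform lower bound for a Wronskian via Proposition~\ref{mainprop}.

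For $r=1$ the first coordinate of $\HH(x)\vv v$ equals $e^{\tau_1}\tilde\varphi(x)\cdot\vv v$. Since nondegeneracy of $\varphi$ makes $1,x,\varphi_2,\dots,\varphi_n$ linearly independent as analytic functions on $I_0$, the map $\vv v\mapsto\sup_{x\in I_0}|\tilde\varphi(x)\cdot\vv v|$ is a norm on $\R^{n+1}$; equivalence of norms then yields $\sup_x|\tilde\varphi\cdot\vv v|\ge c_1\|\vv v\|$ for some $c_1>0$, and using $\|\vv v\|\ge 1$ for primitive $\vv v$ and $\tau_1>0$, this gives $\sup_x\|\HH(x)\vv v\|\ge c_1$. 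The case $r=n+1$ is immediate: a primitive $(n+1)$-collection is a $\Z$-basis of $\Z^{n+1}$ and $\HH(x)\in\SL{n+1}{\R}$, so the top wedge has norm exactly $1$.

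The bulk of the work lies in the range $2\le r\le n$. Here Proposition~\ref{mainprop} supplies a multi-index $I=\{1,2,i_3,\dots,i_r\}$ and a nonzero $\vv a\we\vv b\in\bigwedge^2\Z^{n+1}$ with
$$
\sup_{x\in I_0}\bigl\|\HH(x)\vv v_1\we\cdots\we \HH(x)\vv v_r\bigr\|\;\ge\;e^{\sum_{i\in I}\tau_i}\sup_{x\in I_0}\bigl|(\tilde\varphi(x)\we\tilde\varphi'(x))\cdot(\vv a\we\vv b)\bigr|.
$$
Combining the sign hypotheses \eqref{tau_cond} with \eqref{tau} rewrites $\sum_{i\in I}\tau_i=-\sum_{i\notin I}\tau_i$; since $\{1,2\}\subset I$ and $|I|=r\le n$, the complement $I^c$ is a non-empty subset of $\{3,\dots,n+1\}$, on which $\tau_i<0$, so $\sum_{i\in I}\tau_i>0$. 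Thus the exponential prefactor is $\ge 1$ uniformly in $\tau$.

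It remains to produce a uniform positive lower bound, independent of $\tau$ and of the collection, for the Wronskian $W_{\vv a,\vv b}(x):=(\tilde\varphi(x)\we\tilde\varphi'(x))\cdot(\vv a\we\vv b)=F_{\vv a}F_{\vv b}'-F_{\vv a}'F_{\vv b}$, where $F_{\vv v}(x):=\tilde\varphi(x)\cdot\vv v$. Nondegeneracy of $\varphi$ forces $F_{\vv a}$ and $F_{\vv b}$ to be linearly independent analytic functions on $I_0$ whenever $\vv a,\vv b$ are linearly independent, so their Wronskian does not vanish identically. Consequently the map $\omega\mapsto\sup_{x\in I_0}|(\tilde\varphi\we\tilde\varphi')\cdot\omega|$ is continuous and strictly positive on the compact set $S=\{\omega\in\bigwedge^2\R^{n+1}:\omega\we\omega=0,\,\|\omega\|=1\}$ of decomposable unit two-vectors, hence bounded below there by some $c_2>0$. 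Since any nonzero element of $\bigwedge^2\Z^{n+1}$ has Euclidean norm at least $1$, homogeneity yields $\sup_x|W_{\vv a,\vv b}|\ge c_2$, and taking $\rho_*:=\min(1,c_1,c_2)$ finishes the argument. The principal technical point is the passage from nondegeneracy to non-vanishing of the Wronskian combined with the compactness of the decomposable sphere; the sign conditions on $\tau$ are essential precisely so that the exponential prefactor does not collapse this uniform lower bound.
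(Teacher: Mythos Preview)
Your proof is correct and follows essentially the same approach as the paper's: split by $r$, use the first coordinate for $r=1$, invoke Proposition~\ref{mainprop} for $r\ge2$, observe that \eqref{tau} and \eqref{tau_cond} make the exponential prefactor $\ge1$, and conclude by a compactness argument on the set of unit decomposable $2$-vectors (the paper writes this as $\inf_{\|\vv u\we\vv v\|=1}\sup_{x\in I_0}|(\tilde\varphi\we\tilde\varphi')\cdot(\vv u\we\vv v)|$, which is exactly your infimum over $S$). Your explicit separate treatment of $r=n+1$ and the use of the Pl\"ucker condition $\omega\we\omega=0$ to describe $S$ are cosmetic differences only.
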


\begin{proof}
First let $r=1$. Take any non-zero $\vv v_1\in\Z^{n+1}$. By the non-degeneracy of $\varphi$, we have that
$\tilde\varphi(x)\cdot\vv v_1$  is not identically zero. Then, using $\tau_1>0$ and $\|\vv v_1\|\ge1$ we have that
\begin{align}
\sup_{x\in I_0}\|\HH(x)\vv v_1\|&\ge e^{\tau_1}\sup_{x\in I_0}|\tilde\varphi(x)\cdot\vv v_1|{\ge}
\inf_{\|\vv v\|=1}\sup_{x\in I_0}|\tilde\varphi(x)\cdot\vv v|=:\rho_1\,.\label{rho1}
\end{align}
Observe that $\rho_1>0$, since in \eqref{rho1} we are taking infimum over a compact set of a positive continuous function of $\vv v$.

Now let $r\ge2$ and $\vv v_1,\dots,\vv v_r\in\Z^{n+1}$ be any primitive collection. Let $I=\{1,2,i_3,\dots,i_r\}$ and $\vv a\we\vv b\in\bigwedge^2(\Z^{n+1})\setminus\{\vv0\}$ arise from Proposition~\ref{mainprop}.
By Lemma~\ref{Laplace} and the fact that $\vv w_1(x)=\tilde\varphi(x)$, $\vv w_2(x)=\tilde\varphi'(x)$, we have that
\begin{equation*}%\label{vb12}
\left(\tilde\varphi(x)\wedge\tilde\varphi'(x)\right)\cdot(\vv a\wedge\vv b)=
\left|\begin{array}{cc}
\tilde\varphi(x)\cdot\vv a & \tilde\varphi(x)\cdot\vv b \\
\tilde\varphi'(x)\cdot\vv a & \tilde\varphi'(x)\cdot\vv b
\end{array}
\right|\,.
\end{equation*}
This is a Wronskian of two linearly independent analytic functions and hence it must be non-zero.
Therefore,
\begin{align}
\nonumber\sup_{x\in I_0}\|\HH(x)&\vv v_1\wedge\cdots\wedge \HH(x)\vv v_r\|\ge e^{\sum_{i\in I}\tau_i}\sup_{x\in I_0}\left|\left(\tilde\varphi(x)\wedge\tilde\varphi'(x)\right)\cdot(\vv a\wedge\vv b)\right|\\
&\stackrel{\eqref{tau}\&\,\eqref{tau_cond}}{\ge} \sup_{x\in I_0}\left|\left(\tilde\varphi(x)\wedge\tilde\varphi'(x)\right)\cdot(\vv a\wedge\vv b)\right|\nonumber\\
&\stackrel{\|\vv a\we\vv b\|\ge1}{\ge}\inf_{\|\vv u\we\vv v\|=1}\sup_{x\in I_0}\left|\left(\tilde\varphi(x)\we\tilde\varphi'(x)\right)\cdot(\vv u\we \vv v)\right|=:\rho_2\,.\label{rho2}
\end{align}
Observe that $\rho_2>0$, since in \eqref{rho2} we are taking infimum over a compact set of a positive continuous function of $\vv u\we\vv v$. Taking $\rho_*=\min\{1,\rho_1,\rho_2\}$ and putting together \eqref{rho1} and \eqref{rho2} completes the proof of the proposition.
\end{proof}

\begin{proof}[Proof of Theorem~\ref{thm:QnD-fractal-global}]
In view of Proposition~\ref{prop66}, Theorem~\ref{thm:QnD-fractal-global} follows immediately from Theorem~\ref{thm:QnD-fractal-local} with $\gamma$ as in Theorem \ref{thm:QnD-fractal-local} and $M_2=M_1\rho_*^{-\gamma}\mu(3I_0)$.
\end{proof}

\section{Final remarks}

While the primary purpose of this paper is to resolve specific problems in the theory of Diophantine approximation, the methods we presented lay the foundation for a comprehensive theory of badly approximable points and of bounded orbits of diagonal flows on homogenous spaces.
The next natural step in the direction of such a theory is to understand whether the sets $\Bad(\rr)$ \,\,(this time unrestricted to any submanifold of $\R^n$) are winning. As we mentioned in the introduction it is currently known from \cite{MR3910474} that $\Bad(\rr)$ is hyperplane absolute winning in the case $r_1=\dots=r_{n-1}\ge r_n$. Further developing the ideas of this paper in \cite{BNY2} we establish the following unconditional result.

\begin{thm}
For any $n$-tuple of weights of approximation $\rr$ the set $\Bad(\rr)$ is hyperplane absolute winning.
\end{thm}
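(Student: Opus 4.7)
The plan is to mirror the strategy of Theorems~\ref{thm:main-theorem}--\ref{thm:main-theorem2}, but replace the one-dimensional ambient space (the interval $U$) by an $n$-dimensional ambient space, and replace ``absolute winning on $U$'' by ``hyperplane absolute winning on $\R^n$.'' The first step is to reformulate HAW in terms of intersections with fractals: there is an analogue of Lemma~\ref{thm:intersection-fractals-winning}, namely that a Borel set $S\subset\R^n$ is HAW provided $S\cap K\neq\emptyset$ for every hyperplane diffuse set $K\subset\R^n$, together with an analogue of Lemma~\ref{diffuse} saying that every such $K$ contains a closed subset supporting an Ahlfors regular measure which, in addition, is \emph{hyperplane absolutely decaying} (i.e.\ it assigns controlled mass to $\eps$-neighbourhoods of affine hyperplanes). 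Thus it suffices to prove that for every such measure $\mu$ on $\R^n$ with $\supp\mu\cap U\neq\emptyset$ one has $\Bad(\rr)\cap\supp\mu\cap U\neq\emptyset$.

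The strategy is then to build a Cantor-like subset of $\Bad(\rr)\cap\supp\mu\cap U$ by the Badziahin--Velani machinery of \S\ref{sec-cantor-like-construction}, adapted to $\R^n$: start with a small ball $B_0$ centred at a point of $\supp\mu$, split each surviving ball at stage $q$ into $R^n$ sub-balls, and at each stage remove those sub-balls that either lose too much $\mu$-mass or fail a Dani-type escape condition. Dani's correspondence (Lemma~\ref{prop:dani-corrspondence}) applies verbatim: a point $\vv x$ lies in $\Bad(\rr)$ iff the forward $a(t)$-orbit of $\lat{u(\vv x)}$ is bounded in $X_{n+1}$. The ``dangerous balls'' at stage $q$ are the ones where some point $\vv x$ satisfies
\[
   \lat{b(\beta'l)\,a(\beta(q+1))\,u(\vv x)}\notin K_{e^{-\eps\beta l}}
\]
for an appropriate range of $l$; here $b(t)$ is an auxiliary diagonal flow analogous to \eqref{eq:b(t)}, designed so that its conjugation action expands the relevant ``algebraic obstructions'' faster than $a(t)$ contracts them. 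The aim is to bound, uniformly in $q$, the number of dangerous sub-balls of any given stage-$p$ ball via a QND estimate for $\mu$, exactly as in Propositions~\ref{prop:estimate-extremely-dangerous}--\ref{prop:estimate-dangerous}, and then apply Theorem~\ref{thm:generalized-cantor-set-estimate}.

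The main analytic input required is an $n$-dimensional hybrid QND estimate in the spirit of Theorem~\ref{thm:QnD-fractal-local}: for a hyperplane absolutely decaying, Federer measure $\mu$ on $\R^n$ and a diagonal matrix $g_{\tau}$, either $\mu\{\vv x\in B:\lat{g_\tau u(\vv x)}\notin K_\delta\}\le M_1(\delta/\rho)^\gamma \mu(3B)$, or there is a primitive $\vv v_1\we\cdots\we\vv v_i\in\wedge^i\Z^{n+1}$ with $\sup_{\vv x\in B}\|g_\tau u(\vv x)\vv v_1\we\cdots\we\vv v_i\|<\rho$. This should follow from Theorem~\ref{KleinbockQnD3} applied to the analytic map $\vv x\mapsto u(\vv x)$, once one checks that hyperplane absolutely decaying measures satisfy the absolute decay hypothesis of Definition~\ref{def_abs_good_mes}. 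The coordinate functions of $u(\vv x)\vv v_1\we\cdots\we\vv v_i$ are polynomials, so the ``good'' hypothesis is automatic.

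The main obstacle, and the place where genuinely new work is needed beyond the present paper, is ruling out the algebraic alternative (the analogue of Case~(2) of Theorem~\ref{thm:QnD-fractal-local}) at every stage of the Cantor construction. In the curve case this is handled by Lemma~\ref{lm:yang-lm5.8}, which exploits the one-dimensional unipotent direction $u(\Theta\vv e_1)$ appearing after conjugation by $z(x)$; the linearisation pins an obstruction either to a short integer vector (Case~\caseA{}) or to an obstruction supported on the complementary hyperplane $W$ (Case~\caseB{}), and in both cases the obstruction can be traced back to an earlier stage of the construction and thus to an already-removed ball. In the unrestricted $n$-dimensional setting there is no curve and hence no canonical linearisation, so one must replace Lemma~\ref{lm:yang-lm5.8} by a higher-dimensional drift/contraction argument that uses the fact that the ball $B$ is genuinely $n$-dimensional: for a multivector $\vv v$ whose $H(\vv x)\vv v$ stays small throughout $B$, one applies the unipotent action by all of $u(\R^n)$ (or rather by the translations inside $B$) to decompose $\vv v$ with respect to the weighted flag determined by $a(t)$, and thereby produce either a short integer vector in a suitable subspace (killable by $b(t)$ as in Case~\caseA{}) or a proper rational subspace invariant under the unipotent stabiliser (killable by shifting back in the $q$-parameter as in Case~\caseB{}). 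Making this decomposition work uniformly for every possible rank $1\le i\le n$ of the obstruction, with quantitative control sufficient to close the inductive bound \eqref{claim0}, is the technical heart of the argument and is where the bulk of \cite{BNY2} will go. Once this is achieved, the proof of $\cK_\infty\neq\emptyset$ follows exactly as in \S\ref{theproof}, yielding a point of $\Bad(\rr)\cap\supp\mu$ and hence HAW.
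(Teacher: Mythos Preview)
This theorem is not proved in the present paper: it is stated in \S6 as an announcement of the main result of the follow-up \cite{BNY2}. The only information the paper gives about the proof is that it requires ``suitable extensions of Lemmas~\ref{thm:intersection-fractals-winning} and \ref{diffuse}'' and that \cite{BNY2} ``uses Cantor winning sets instead of generalised Cantor sets''. So there is no proof here against which your proposal can be checked line by line.

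That said, what you have written is a strategy outline, not a proof, and you are candid about this: you explicitly identify the higher-dimensional replacement for Lemma~\ref{lm:yang-lm5.8} as ``the technical heart of the argument'' and say that ``the bulk of \cite{BNY2} will go'' there. This is accurate, but it also means the proposal as it stands has a genuine gap at exactly the decisive point. The reduction to intersections with supports of hyperplane-absolutely-decaying Ahlfors regular measures, the Cantor construction, Dani's correspondence, and the hybrid QND estimate all transfer as you describe (and your remark that the coordinate functions of $u(\vv x)\vv v_1\we\cdots\we\vv v_i$ are polynomials, hence good, is correct). What does not transfer is precisely the mechanism that in the curve case produces the dichotomy \caseA{}/\caseB{} of Lemma~\ref{lm:yang-lm5.8}: there is no single distinguished direction $\vv e_1$ after conjugation, and the auxiliary flow $b(t)$ from \eqref{eq:b(t)} was tailored to expand that single direction. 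Your suggestion to ``use the full unipotent action by $u(\R^n)$'' and to decompose with respect to the weighted flag of $a(t)$ is the right instinct, but making this yield a usable inductive structure --- in particular, choosing an analogue of $b(t)$ compatible with an arbitrary weight vector $\rr$ and showing that every possible rank-$i$ obstruction is killed at an earlier stage --- is the entire content of the argument, and none of it is carried out here.

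Two minor comparative remarks. First, the paper indicates that \cite{BNY2} works with Cantor winning sets rather than the $(R,\bh)$-Cantor sets of \S\ref{sec-cantor-like-construction}; these frameworks are closely related, so your choice is not wrong, but it is not identical to what the authors announce. Second, your formulation of the fractal-intersection criterion for HAW is correct in spirit (and is exactly the kind of extension of Lemmas~\ref{thm:intersection-fractals-winning}--\ref{diffuse} the paper alludes to), but note that it too is not proved in the present paper and must be supplied separately.
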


%\noindent
In particular, in \cite{BNY2} we further develop the framework of intersections with fractals which requires suitable extensions of Lemmas~\ref{thm:intersection-fractals-winning} and \ref{diffuse} underlining the equivalence \eqref{equiv}. Also, in \cite{BNY2} we demonstrate an equivalent approach that uses Cantor winning sets instead of generalised Cantor sets discussed in \S\ref{sec-cantor-like-construction} above.

We end this paper by noting several natural follow-up problems.

\begin{prob}\label{p2}
Generalise the results of this paper to arbitrary non-degenerate curves that are not analytic.
\end{prob}

%\noindent
Most of the proof presented this paper will work for the non-analytic case. The sticking `technical' point is to obtain Corollary~\ref{prop614+} in the non-analytic case, which would require generalising Proposition~\ref{prop614} to the skew-gradients introduced in \cite{Bernik-Kleinbock-Margulis}.

\begin{prob}[{\cite[Conjecture~D]{MR3231023}}]\label{p3}
Prove that the set of $\rr$-badly approximable points lying on any nondegenerate submanifold of $\R^n$ is winning.
\end{prob}

%\noindent
This more general version of Problem~\ref{p1}, which we will attempt to address in subsequent publications, will require further generalisation of the framework of intersection with fractals, adapting the notion of hyperplane absolute winning and developing suitable quantitative non-divergence estimates.

%\medskip

Finally we note that all of the aforementioned problems can be extended to other settings: inhomogeneous approximations, Diophantine approximation over locally compact field, $p$-adic and more generally $S$-arithmetic setting. Examples of Diophantine approximation in these settings can be found for instance in \cite{MR2321374, MR3733884, MR2314053}. To what extent the techniques presented in this paper can be generalised to these settings remains an appealing open question.

%\medskip

\bibliography{reference}
\bibliographystyle{alpha}
\end{document}